\tikzset{
baseline=-2pt,
every label/.style={font=\tiny, inner sep=0pt},
dashed/.style={dash pattern=on 1pt off 1pt},
label distance=1pt,
}
\title{The Rank-Ramsey Problem and the Log-Rank Conjecture}
\author[1]{Gal Beniamini}
\author[1]{Nati Linial}
\author[2]{Adi Shraibman}
\affil[1]{The Hebrew University of Jerusalem}
\affil[2]{Tel Aviv-Yaffo Academic College}
\date{}
\newtheorem{theorem}{Theorem}[section]
\newtheorem*{theorem*}{Theorem}
\newtheorem*{definition*}{Definition}
\newtheorem{corollary}{Corollary}[theorem]
\newtheorem{lemma}[theorem]{Lemma}
\newtheorem{proposition}[theorem]{Proposition}
\newtheorem{remark}[theorem]{Remark}
\newtheorem{claim}[theorem]{Claim}
\newtheorem{definition}[theorem]{Definition}
\newtheorem{openq}{Open Problem}[subsection]
\theoremstyle{plain}
\newtheorem{thm}{Theorem}
\newcommand{\NN}{\mathbb{N}}
\newcommand{\ZZ}{\mathbb{Z}}
\newcommand{\RR}{\mathbb{R}}
\DeclareMathOperator{\spec}{spec}
\DeclareMathOperator{\srg}{srg}
\DeclareMathOperator{\diam}{diam}
\DeclareMathOperator{\spar}{spar}
\newcommand{\widesim}[2][1.5]{
  \mathrel{\overset{#2}{\scalebox{#1}[1]{$\sim$}}}
}
\DeclareMathOperator{\NAE}{NAE}
\DeclareMathOperator{\AllEq}{AE}
\DeclareMathOperator*{\Motimes}{\text{\raisebox{0.25ex}{\scalebox{0.8}{$\bigotimes$}}}}
\DeclareMathOperator{\AND}{AND}
\DeclareMathOperator{\rank}{rank}
\DeclareMathOperator{\DCC}{D}
\newcommand{\eqdef}{\vcentcolon=}
\DeclareMathOperator{\mon}{mon}
\newcommand{\norm}[1]{\left\lVert#1\right\rVert}
\DeclareMathOperator{\msr}{msr}
\DeclareMathOperator{\GF}{GF}
\DeclareMathOperator{\Cay}{Cay}
\theoremstyle{remark}
\providecommand{\remarkname}{Remark}
\begin{document}

\maketitle

\vspace{-0.105cm}
\begin{abstract}
A graph is called Rank-Ramsey if (i) Its clique number is small, and (ii) The
adjacency matrix of its complement has small rank.
We initiate a systematic study of such graphs. Our main motivation is that 
their constructions, as well as proofs of their non-existence, are intimately related to
the famous log-rank conjecture from the field of communication complexity.
These investigations also open interesting new avenues in Ramsey theory.

We construct two families of Rank-Ramsey graphs exhibiting 
\textit{polynomial separation} between order and complement rank. Graphs in
the first family have bounded clique number (as low as $41$).
These are subgraphs of certain strong products,
whose building blocks are derived 
from triangle-free strongly-regular graphs.
Graphs in the second family are obtained by applying Boolean functions to Erd\H{o}s-R\'enyi graphs.
Their clique number is logarithmic, but their
complement rank is far smaller than in the first family, about $\mathcal{O}(n^{2/3})$.
A key component of this construction is our matrix-theoretic view of lifts.

We also consider lower bounds on the Rank-Ramsey numbers, and determine them in
the range where the complement rank is $5$ or less.
We consider connections between said numbers and 
other graph parameters, and 
find that the two best known explicit constructions of triangle-free Ramsey graphs 
turn out to be far from Rank-Ramsey.
\end{abstract}

\section{Introduction}
\label{section:introduction}

A graph $G$ is called \textit{Rank-Ramsey} if both its clique number and the rank of its complement are small.\footnote{As usual,
the rank of the graph $G$ is the real rank of its adjacency matrix $A_G$.}
Rank-Ramsey graphs are clearly Ramsey graphs, because
$\alpha(G) \le \rank(\overline{G})$
holds for every graph $G$.
Indeed, an independent set in $G$
corresponds to a clique in its complement, which has full rank. So,
what changes when we replace the traditional 
\textit{independence number} $\alpha(G)$, with the complement rank?

This new notion originates in our long-lasting failure to
understand the structure of low-rank matrices.
The most ambitious attempt at this mystery is
the famous {\em log-rank} conjecture \cite{lovasz1988lattices}
from communication complexity, which attempts to
characterise low-rank \textit{binary matrices}.
This conjecture (some would call it a {\it problem})
posits, in a form due to Nisan and Wigderson \cite{nisan1995rank},
that any low-rank binary matrix must contain a large monochromatic minor.
An equivalent graph-theoretic formulation of this problem,
due to Lov{\'a}sz and Saks (also in \cite{nisan1995rank}),
asks whether there exists an absolute constant $c$ such that:
\[
    \log \chi(G) \le \mathcal{O} \left( \log^c  \rank(G) \right), 
\]
for every graph $G$. Here $\chi(G)$ is the chromatic number of $G$.

Constructions of Rank-Ramsey graphs
as well as impossibility results
are deeply connected to the log-rank conjecture,
as first suggested in \cite{lee2023around}.
As we show here, any construction of Rank-Ramsey graphs 
yields a separation in the log-rank conjecture,
and conversely, under certain conditions, 
proving the 
impossibility of such graphs may validate the conjecture. We elaborate on
this connection in \Cref{sect:rank_ramsey_and_log_rank}.

The difficulty of characterising low-rank matrices is profound.
Even understanding {\em typical} low-rank matrices is a mystery.
Indeed, culminating 60 years of excellent research,
Tikhomirov \cite{tikhomirov2020singularity} showed
that an $n\times n$ matrix with random
$\pm 1$ entries is singular with probability $(\frac{1}{2}+o_n(1))^n$,
which is clearly tight up to
the little-oh term.
There is a rich literature
of theorems of the same vein, showing 
that full-rank matrices are the rule 
rather than the exception.

Random constructions are
key to the study of Ramsey numbers.
Already over 75 years ago,
Erd\H{o}s \cite{erdos1947some} discovered that 
asymptotically almost all 
graphs are Ramsey.
In contrast, random graphs typically have \textit{logarithmic} independence number,
but \textit{full} complement rank. 
This poses new non-trivial challenges in the 
construction of Rank-Ramsey graphs and in understanding the range of possible values.
We also consider two
of the most interesting and deep constructions
of Ramsey Graphs \cite{alon1994explicit, codenotti2000some},
and both appear of little value in the study of Rank-Ramsey graphs. 

As mentioned, our basic definition hinges on
the observation that the submatrix that
corresponds to an independent set in $G$
has full complement rank. Such an interplay between 
cardinality (here, of independent sets in $G$)
and rank (of the corresponding submatrix in the adjacency matrix of $\overline{G}$) is an
old and fundamental theme in combinatorics. Babai and Frankl exhibit in their book \cite{babai2020linear} 
numerous fascinating examples of proofs
that compare between cardinality and rank. Typically, one proves
a desired lower bound on the cardinality of a 
set by showing that a certain associated matrix
has large rank.
The historically oldest example
known to us of this method is Hanani's proof
\cite{hanani1951number} of the de Bruijn-Erd\H{o}s Theorem.\footnote{
While Hanani's paper dates to 1951, there are
records that it was first written (without
publishing) in the late 1930's.} It is 
interesting to note that in contrast with such
classical proofs, in the study of
the Rank-Ramsey problem, rank bounds cardinality
{\em from above}.

\paragraph{Our Results.}
We first present some constructions of 
Rank-Ramsey graphs. As mentioned,
using the probabilistic method
to this end is not an option, since we lack
a natural distribution over low-rank matrices.
In this view, we opt for \textit{explicit}
constructions of Rank-Ramsey graphs, yet
relying at times on some probabilistic machinery.
We begin with two constructions of Rank-Ramsey graphs 
exhibiting a \textit{polynomial separation} 
between complement rank and order.

We need to introduce some notation first.
We consider
the least complement rank of an $n$-vertex graph whose 
clique number is at most $d$.\footnote{
Throughout the paper we measure $\rank(A_G+I)$ only, 
rather than $\rank(\overline{G})$.
Note that $A_G + I = J - A_{\overline{G}}$, 
where $J$ is the all-ones matrix,
therefore the two quantities differ by at most $1$.
Nevertheless, we stress that even when we use (for convenience) the term ``complement rank'',
we are always referring only to $\rank(A_G + I)$.}
\begin{definition*}
    For every positive $d$, let $\nu_d: \mathbb{N} \to \NN$ be the function
    \[
        \nu_d(n) \eqdef \min_{G} \rank( A_G + I )
    \]
    minimizing over all $n$-vertex graphs $G$ with clique number $\le d$.
\end{definition*}
As a brief illustration, note that
$\nu_d(n) \le \lfloor n/d \rfloor$: Take
the disjoint union of $\lfloor n/d \rfloor$
cliques $K_d$ (and a clique on the remaining
$n \pmod d$ vertices).
Notwithstanding the simplicity of this inequality, it is not easy to beat.
In particular, no graph with fewer than $10$ vertices can accomplish this. \\

With this notation, here is our first result:

\begin{thm}
    \label{thm:kron_construction_1}
    The following bounds hold:
    \begin{enumerate}
        \item $\nu_{41}(n) = \mathcal{O} \left( n^{1 - \frac{1}{10000}} \right)$.
        \item For any sufficiently large $d$, there holds $\nu_d(n) = \mathcal{O} \left( n^{\log_{296} (232)} \right)$, where $\log_{296} (232) \approx0.957 $.
    \end{enumerate}
\end{thm}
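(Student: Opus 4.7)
My plan is to construct small gadget graphs $G_0$ derived from triangle-free strongly-regular graphs, such that $G_0$ has $m$ vertices with $\rank(A_{G_0} + I) = r < m$, and then amplify via iterated strong products. The central identity driving everything is
\[
    A_{G \boxtimes H} + I = (A_G + I) \otimes (A_H + I),
\]
so $G_0^{\boxtimes k}$ has $m^k$ vertices, complement rank $r^k = n^{\log_m r}$ (where $n = m^k$), and clique number $\omega(G_0)^k$. Since passing to an induced subgraph can only lower the complement rank, it suffices to find a large induced subgraph of $G_0^{\boxtimes k}$ in which the clique number is bounded by the target value.

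For Part~2 the exponent $\log_{296}(232)$ is a transparent hint: one selects a single gadget $G_0$ with $m = 296$ and $r = 232$, iterates $\boxtimes$ to reach the desired order, and then prunes. The cliques of $G_0^{\boxtimes k}$ correspond to products of cliques of $G_0$ across coordinates, so for any fixed size their number is polynomial in $n$. A standard probabilistic deletion --- keeping each vertex independently with density $n^{-o(1)}$ --- then suffices to bring the clique number down below the allowed $d$ by a first-moment/union-bound argument, while leaving $n^{1 - o(1)}$ vertices and thus preserving the exponent. For Part~1 the gadget must additionally have $\omega(G_0)$ very small (so that tensor powers have clique number $\omega(G_0)^k$, which can then be pruned below $41$ by killing all cliques of size exceeding the largest power of $\omega(G_0)$ that still fits under $41$); the precise exponent $1 - 1/10000$ reflects the simultaneous effect of the ratio $\log r / \log m$ of the chosen gadget and the inflation in exponent caused by the pruning step.

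The main obstacle, in my view, is the construction of $G_0$ itself. A short spectral calculation shows that no non-trivial triangle-free SRG has $-1$ as an eigenvalue of $A$ (the SRG equations force $k = \lambda + 1$, collapsing to disjoint unions of cliques or to the complete graph), and hence $A_{G_0} + I$ is of full rank whenever $G_0$ is itself an SRG. The gadget must therefore be properly \emph{derived} from one or more triangle-free SRGs --- for instance via a carefully chosen induced subgraph that annihilates an eigenspace, a low-rank perturbation, a lexicographic blow-up by an independent set, or a combined construction using several SRGs with compatible spectral structure. Matching the specific parameters $(m, r) = (296, 232)$ of Part~2, and the small clique bound $\omega(G_0)$ needed in Part~1 to permit $d = 41$ after amplification, is the creative heart of the argument; once such gadgets are in hand, the remaining steps (tensor amplification via $\boxtimes$ and random pruning of oversized cliques) are essentially standard tensor-rank and union-bound computations along the lines sketched above.
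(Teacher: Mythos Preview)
Your skeleton --- strong-product amplification via $A_{G \boxtimes H} + I = (A_G + I) \otimes (A_H + I)$, followed by random pruning of large cliques --- is exactly the paper's approach, and you correctly identify the spectral obstruction that no nontrivial triangle-free SRG has eigenvalue $-1$. The missing idea is the gadget construction itself. The paper's trick is short and specific: take the Kronecker \emph{graph} product $G_0 = \mathcal{C} \otimes K_l$, where $\mathcal{C}$ is the Clebsch graph. Since $\omega(G \otimes H) \le \min\{\omega(G),\omega(H)\}$, this is still triangle-free; and since $\spec(K_l) = \{\,l-1,\ (-1)^{(l-1)}\,\}$, the eigenvalue $+1$ of $\mathcal{C}$ (which has multiplicity $10$) becomes eigenvalue $-1$ of $G_0$ with multiplicity $10(l-1)$. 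Hence $v(G_0)=16l$ and $\rank(A_{G_0}+I)=6l+10$. None of your suggested derivations (induced subgraphs, low-rank perturbations, lexicographic blow-ups) is this; the point is to \emph{negate} a large $+1$-eigenspace by tensoring with a graph whose non-Perron spectrum is $\{-1\}$.

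Your pruning analysis is also quantitatively off, and this causes you to misread the constants. In $G_0^{\boxtimes k}$ with $\omega(G_0)=2$, the number of ordered $(d+1)$-cliques is essentially $\big(e(G_0)\cdot 2^{d+1}\big)^k$, so the deletion density needed to kill them is $p \approx n^{-\log_m 2 - o_d(1)}$, a \emph{fixed} positive power of $n$, not $n^{-o(1)}$. The surviving graph therefore has order roughly $(m/2)^k$, and the limiting exponent as $d\to\infty$ is $\log_{m/2} r$, not $\log_m r$. In particular, $296$ is \emph{not} the vertex count of the gadget: the paper takes $l=37$, giving a triangle-free base graph on $m=16\cdot 37=592$ vertices with complement rank $r=6\cdot 37+10=232$, and $296 = 592/2$ appears only after accounting for the pruning loss. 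Part~1 uses the same family with $l=26$ and $d=41$.
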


The proof of \Cref{thm:kron_construction_1} starts with a base-graph, that
beats the above \textit{trivial bound}.
We then take repeated strong products 
of this graph, to amplify its already low complement rank,
and show that one can find a \textit{large subgraph} of the product,
with no cliques above a certain size.
The search for
a base-graph turns out to be challenging. 
To this end, we turn to strongly-regular graphs (SRGs):
Highly-structured regular graphs with only three distinct eigenvalues, one of which, the Perron eigenvalue, is simple. 

A particularly good base-graph would have both low complement rank \textit{and} small clique number, say, be triangle-free.
However, we encounter two immediate barriers.
Only seven triangle-free strongly regular graphs are known: $C_5$, Petersen, Clebsch,
Hoffman-Singleton, Gewirtz, Mesner, and the Higman-Sims graph.
Whether this list is exhaustive or not is a famous open question (Cf. \cite{biggs1993algebraic,brouwer2012strongly}).
Moreover, another problem arises. We show that triangle-free SRGs must have full complement rank.

To handle these difficulties, we use a ``clique tensoring trick'',
allowing us to produce new graphs with low complement rank from graphs whose spectrum
has certain properties, while retaining the clique number. 
Among the aforementioned list of triangle-free SRGs, a unique good candidate emerges: the Clebsch graph
(and for $K_4$-free, the complement of the Schl\"afli graph).
To our knowledge this is at least the second time the Clebsch graph had appeared in the context of Ramsey theory,
the first being the proof of the multicolour Ramsey number, $R(3,3,3)=17$, due to Greenwood and Gleason \cite{greenwood1955combinatorial}.

Our second result is another construction of a  Rank-Ramsey family.
\begin{thm}
    \label{thm:ae_construction_2}
    For any two constants $c, \varepsilon > 0$ with $c > 2 \left(\frac{2}{3\varepsilon}+1\right)^2$,
    there holds
    $
        \nu_{ c \log n}(n) \le \widetilde{\mathcal{O}} \left(n^{\frac{2}{3} + \varepsilon} \right)
    $.
\end{thm}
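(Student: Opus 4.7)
My plan is to construct $G$ by combining a small number $t$ of independent Erd\H{o}s-R\'enyi random graphs via a Boolean combination, exploiting the matrix-theoretic view of lifts to obtain the required complement-rank bound while controlling the clique number by a standard probabilistic argument.

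Set $t \eqdef \lceil 2/(3\varepsilon) \rceil + 1$ so that the hypothesis $c > 2(2/(3\varepsilon)+1)^2$ becomes $c > 2 t^2$. I would sample $t$ independent Erd\H{o}s-R\'enyi graphs $H_1, \ldots, H_t$ on a base vertex set of size $m \sim n^{1/3}$, and build $G$ on $n$ vertices via an iterated lift whose bipartite patterns are derived from these $H_i$'s. By the matrix-theoretic view of lifts (presumably established earlier in the paper), the matrix $A_G + I$ should decompose into $O(t)$ Kronecker-structured terms, each involving an $A_{H_i} + I$ of rank at most $m \le n^{1/3}$ together with an all-ones fiber matrix. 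This yields $\rank(A_G+I) \le t \cdot m^2 \cdot \mathrm{polylog}(n) = \widetilde{\mathcal{O}}(n^{2/3+\varepsilon})$ after tuning the base sizes.

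For the clique bound, I would use a first-moment argument. A fixed $k$-subset forms a clique in $G$ with probability at most $2^{-\Omega(k^2/t)}$; the dependence on $t$ reflects the Boolean combination structure, since a $G$-clique must be witnessed by an appropriate pattern across the $t$ lift levels. Taking $k = c\log n$ and union-bounding over the $\binom{n}{k} \le n^k$ possible subsets gives expected count $o(1)$ provided $c > 2t^2$; the quadratic rather than linear dependence on $t$ comes from handling the cross-level dependencies introduced by the iterated lift, and exactly matches the theorem's hypothesis.

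The main obstacle is the fundamental tension between random ingredients, which naturally produce full-rank matrices, and the low-rank target. Naive combinations of Erd\H{o}s-R\'enyi graphs (unions, intersections, XORs) do not reduce the complement rank at all, because a single random ER adjacency matrix is already full-rank with high probability, and entry-wise Boolean operations tend to preserve this. The matrix-theoretic lift framework is precisely what circumvents this: it packages the randomness into a small number of low-rank tensor factors, rather than spreading it across all $n^2$ entries of the adjacency matrix. Balancing this against the clique constraint—larger $t$ pushes the exponent closer to $2/3$ but forces the clique budget $c \log n$ to absorb additional fluctuations at each lift level—produces the precise quadratic dependence of $c$ on $\varepsilon$ in the theorem.
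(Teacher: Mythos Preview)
Your clique bound is the genuine gap. You assert that a fixed $k$-subset is a clique with probability at most $2^{-\Omega(k^2/t)}$, but in any lift of this type that statement is false without further work. Take the paper's actual lift $M=\AllEq(A_1,A_2,A_3)$ with $A_i\sim G(k,1/2)$: vertices are indexed by $[k]^3$, and $S\subset[k]^3$ is a clique iff the three generalised minors $A_1[\pi_x(S)],A_2[\pi_y(S)],A_3[\pi_z(S)]$ coincide. If $S$ lies in a sub-cube $[b]^3$ with $b\approx|S|^{1/3}$, only $O(b^2)=O(|S|^{2/3})$ random bits are involved, so the clique probability is only $2^{-O(|S|^{2/3})}$, hopelessly too large to beat the union bound over $\binom{k^3}{|S|}$ subsets. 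The same ``cube-like'' obstruction arises in any iterated-lift variant you might mean; the naive first-moment argument simply does not close.

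The missing idea, and the heart of the paper's proof, is a separate combinatorial lemma: for any $\eta\in(0,1)$ there exists $T\subset[k]^3$ of size $\widetilde{\Theta}(k^{3(1-\eta)})$ in which \emph{every} subset $S$ of size $a\log k$ has some axis projection of size at least $\eta|S|$. One then restricts to the principal minor $M[T]$ and argues, for each candidate clique $S$, along its large projection: say $|\pi_x(S)|\ge\eta a\log k$, so $A_1[\pi_x(S)]$ is a genuine $G(\eta a\log k,1/2)$ minor that must match a fixed target, giving probability $2^{-\binom{\eta a\log k}{2}}$. This beats the union bound precisely when $a>6(1-\eta)/\eta^2$, and the change of variables $n=|T|$, $\varepsilon=\tfrac{2\eta}{3(1-\eta)}$, $c=\tfrac{a}{3(1-\eta)}$ yields exactly the hypothesis $c>2(\tfrac{2}{3\varepsilon}+1)^2$. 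Note also that the paper uses exactly \emph{three} gadgets throughout (the degree-$2$ function $\AllEq$ on three bits); the exponent approaches $2/3$ via $\eta\to0$, not via a growing number $t$ of lift levels, and your parametrisation ``$t$ gadgets of size $m\sim n^{1/3}$ on $n$ vertices'' is internally inconsistent since a $t$-fold lift has $m^t$ vertices.
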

The graphs in this second family have complement rank 
near $\mathcal{O}(n^{2/3})$, and logarithmic clique number.
One can take, e.g.,
$\varepsilon=10^{-3}$ and $c=10^6$.

A key component in the proof of \Cref{thm:ae_construction_2} is a matrix-theoretic view of lifts
that we adopt. As we elaborate below (\Cref{subsect:matrix_view_lifts}) {\em lifting}
is a well-established proof technique in the field of communication complexity. However,
we apply it a little differently than usual, and consider the composition of a Boolean function 
$f: \{0,1\}^n \to \{0,1\}$, and a \textit{collection} of binary matrices, not necessarily all identical.
We show a fundamental connection between the expansion of $f$ as a real multilinear polynomial,
and the resulting lifted matrix.
This implies a lifting theorem of-sorts:
The rank of the lifted matrix is determined by the expansion of $f$, and by the ranks
of its constituent matrices.

The Boolean function used in \Cref{thm:ae_construction_2} is the well-known
$\NAE: \{0,1\}^3 \to \{0,1\}$. This function appeared first
in the context of the log-rank conjecture in Nisan and Wigderson's
seminal paper \cite{nisan1995rank}.
The vertices of the lifted graph can be naturally identified with the points of the cube, $[k]^3$.
We show that within this graph, cliques emerge either
(i) From correlations between the lifted matrices, or 
(ii) From certain degenerate subsets of the cube.
The former case is handled by lifting three i.i.d.\ Erd\H{o}s-R\'enyi graphs. 
The latter issue is dealt with by means of a probabilistic argument.
We apply a random construction to find a large subset $S$ of the cube,
such that \textit{every} subset $T \subset S$ of a certain size, is far from ``cube-like''.
That is, $T$ has \textit{some} axis-projection of size \textit{linear} in its cardinality.
Our lifts are unorthodox in two ways.
Firstly, all \textit{gadgets} (i.e., matrices) are distinct, and in fact, uncorrelated.
Secondly, the traditional roles are reversed; the function
has \textit{constant size}, while the gadgets \textit{grow asymptotically}.\\

To present our third result, we require yet another bit of notation. As usual, the Ramsey
number $R(s,t)$ is the least number $n$ such that every $n$-vertex graph has
either an $s$-clique or a $t$-anticlique.
Extending the analogy, we define the ``KRamsey'' numbers.\footnote{We leave it
to the reader's taste and judgement how
to pronounce this new name.}
\begin{definition*}
    $R^k(s,t)$ is the least number $n$ such that every order-$n$ graph $G$ has either an $s$-clique, or
    \[
        \rank(A_G + I) \ge t.
    \]
\end{definition*}
We characterise the KRamsey numbers, when the rank is small.
\begin{thm}
    \label{thm:kramsey_3}
    For $2 \le t \le 5$ and every 
    $s > 1$, there holds 
    $R^k(s, t) = (s-1)(t-1) + 1$.
\end{thm}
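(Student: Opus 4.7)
The plan is to prove the two bounds $R^k(s,t) > (s-1)(t-1)$ and $R^k(s,t) \le (s-1)(t-1) + 1$ separately. The lower bound is immediate: the graph $G$ consisting of $t-1$ disjoint copies of $K_{s-1}$ has $\omega(G) = s-1$ and $\rank(A_G + I) = t-1$ (since $A_G + I$ is block-diagonal with $t-1$ rank-one all-ones blocks), which gives $R^k(s,t) > (s-1)(t-1)$.

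For the upper bound, I would establish the structural claim that $\rank(A_G + I) \le r$ with $r \in \{1,2,3,4\}$ implies $|V(G)| \le r \cdot \omega(G)$. The first step is a reduction to a twin-free graph. Call $u, v$ true twins if $N[u] = N[v]$, equivalently if their rows in $A_G + I$ coincide. Identifying each twin class realizes $G$ as a blowup of a twin-free graph $H$: each vertex $w$ of $H$ is replaced by a clique $V_w$ of size $n_w \ge 1$, with $V_w, V_{w'}$ completely joined if and only if $ww' \in E(H)$. Since repeated rows do not contribute to the row space, $\rank(A_H + I) = \rank(A_G + I) \le r$; moreover, cliques in $G$ correspond to weighted cliques in $H$, giving $\omega(G) = \max_{C} \sum_{w \in C} n_w$ where $C$ ranges over cliques of $H$.

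The structural claim then reduces to the key sub-lemma: for every twin-free $H$ with $\rank(A_H + I) \le r \le 4$, the clique-cover number $\theta(H)$ is at most $r$. Granting this, if $C_1, \ldots, C_{\theta(H)}$ is a clique cover of $H$, each $C_i$ lifts to a clique in $G$ of size $\sum_{w \in C_i} n_w \le \omega(G)$, yielding
\[
|V(G)| \;=\; \sum_w n_w \;\le\; \sum_{i=1}^{\theta(H)} \sum_{w \in C_i} n_w \;\le\; \theta(H) \cdot \omega(G) \;\le\; r \cdot \omega(G).
\]

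I would prove the sub-lemma by direct enumeration. The collection of twin-free graphs $H$ with $\rank(A_H + I) \le r$ is finite, since the $|V(H)|$ distinct rows of $A_H + I$ are $0/1$ vectors lying in an $r$-dimensional real subspace, which bounds $|V(H)|$ in terms of $r$. For $r \le 3$ the complete list is $\{K_1, \overline{K_2}, P_3, \overline{K_3}\}$, each trivially satisfying $\theta \le r$. The main obstacle is $r = 4$, which requires compiling the full list of twin-free graphs with $\rank(A + I) = 4$ — including $\overline{K_4}$, $K_{1,3}$, $P_3 \cup K_1$, $P_4$, $C_4$, $P_5$, $C_6$, and any further examples — and verifying $\theta(H) \le 4$ in each case. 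This bounded case analysis is where the bulk of the effort lies, and the argument cannot be pushed to $r \ge 5$, in agreement with \Cref{thm:kron_construction_1}, which shows that the trivial bound $\nu_d(n) \le \lfloor n/d \rfloor$ can be beaten for larger rank.
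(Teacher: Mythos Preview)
Your approach is correct and would yield a valid proof, but it is organised differently from the paper's. Both arguments rest on the same finite classification of ``reduced'' graphs with small complement rank, yet they work on opposite sides of the complement and extract different invariants. The paper passes to $\overline{G}$, decomposes it into connected components, and invokes the Chang--Huang--Ye characterisations of connected graphs of rank at most $5$ to obtain an explicit list of base graphs (Lemma~\ref{lem:characterisation_low_rank_a_plus_i}); it then handles the clique-blowup components by a direct rank computation (Lemma~\ref{lem:bound_comp_disj_clique_blowup}) and the remaining $t=5$ base graphs by a linear program bounding $\alpha(H^w)/|V(H^w)|$ from below. Your route instead performs the true-twin reduction on $G$ itself and aims for the clique-cover bound $\theta(H)\le r$, which is formally a slightly stronger conclusion than the paper's (you get $\chi(\overline{H})\le r$ rather than merely $|V|/\alpha(\overline{H})\le r$), but is equally easy to verify on the relevant finite list. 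In fact your tentative list for $r=4$ is complete: the seven graphs $\overline{K_4},\,K_{1,3},\,P_3\cup K_1,\,P_4,\,C_4,\,P_5,\,C_6$ are precisely the complements of the paper's base graphs (the five connected ones together with the two arising from disconnected $\overline{H}$), and each visibly has $\theta\le 4$. The trade-off is that the paper's LP argument is clean once the Chang--Huang--Ye lists are imported, and it even gives the sharper constant $1/3$ for the non-$K_4$ cases, whereas your approach is more elementary but leaves you to certify completeness of the enumeration --- either by a direct finiteness argument (distinct $0/1$ rows in an $r$-dimensional subspace) or, as the paper does, by translating from the existing rank-$\le 5$ classifications.
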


Unlike the classical Ramsey numbers, KRamsey numbers are not symmetric in their parameters:
it is much harder to keep the rank low than it is to avoid large cliques.
Indeed, the triangle-free base-graphs used in \Cref{thm:kron_construction_1}, together 
with the characterisation of \Cref{thm:kramsey_3}, imply that $R^k(3,n) > R^k(n,3)$ for every sufficiently large $n$.
Also, it is not hard to see that $(s-1)(t-1) < R^k(s,t) \le R(s,t)$ 
for every $s$ and $t$.
Another stark contrast is, therefore, that while Ramsey numbers 
always strictly exceed
this trivial bound (whenever $s,t > 2$), for KRamsey numbers this is evidently untrue.

The proof of \Cref{thm:kramsey_3} relies on \textit{blowup} of graphs,
where vertices are replaced with anticliques and edges by complete bipartite graphs.
We remark that for every rank $r$, there exists a finite list 
of graphs $\mathcal{G}_r$
such that every connected graph of rank $r$ is a blowup of a graph in $\mathcal{G}_r$.
These lists appear in our proof, in particular those of ranks $r=4$ \cite{chang2011characterization}
and $r=5$ \cite{chang2012characterization}, found by Chang, Huang and Ye. \\

Next we consider
triangle-free Rank-Ramsey graphs.
Following a long line of research \cite{kim1995ramsey}, the Ramsey number $R(3,t)$ is now known up to a small multiplicative factor
\cite{fiz2020triangle}.
In this view, in order to improve the bounds on $R^k(3, t)$, we must resort to graph parameters
other than independence number. Recall that
an \textit{orthonormal representation} of a graph $G$ is
an assignment of unit vectors to its vertices such that vectors
of non-adjacent vertices are orthogonal. Let $M$ be
the Gram matrix of these vectors.
The least dimension of such a representation of $G$ is denoted $\msr(G)$. 
The Lov{\'a}sz number $\vartheta(G)$ \cite{lovasz1979shannon} 
is another well-known graph parameter, related to orthonormal representations, 
whose definition we omit here for brevity. An important relation between
these two quantities and with Ramsey graphs
is the well-known ``sandwich theorem''
(see \cite{knuth1993sandwich}), which states that for every graph $G$ there holds
\[
    \alpha(G) \le \vartheta(G) \le \msr(G) \le \chi(\overline{G}).
\]

The matrices $M$ and $A_G + I$ agree both on the main diagonal, and on the entries of
non-edges of $G$. 
What other properties do they share?
We prove the following. 

\begin{thm}
    \label{thm:lb_thm_4}
    For infinitely many $n>1$, there exist $n$-vertex triangle-free graphs 
    $G_1$ and $G_2$, with
    \begin{enumerate}
        \item $\vartheta(G_1) = \Theta(n^{2/3})$ and $\rank(A_{G_1} + I) = n$.
        \item $\msr(G_2) \ge n/2$ and $\rank(A_{G_2} + I) = (3/8 + o(1))n$.
    \end{enumerate}
\end{thm}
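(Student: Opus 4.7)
The natural candidates for $G_1$ and $G_2$ are the two explicit triangle-free Ramsey graphs flagged in the introduction: Alon's construction from \cite{alon1994explicit} and the Codenotti--Pudlak--Resta (CPR) construction from \cite{codenotti2000some}. Both are Cayley graphs on abelian groups, so their adjacency matrices are simultaneously diagonalised by the characters of the group, and every eigenvalue is a character sum over the connecting set. This Fourier picture is the common engine behind both halves of the statement: the rank of $A_G + I$ equals the number of characters $\chi$ for which the character sum $\widehat{\mathbf{1}_S}(\chi)$ differs from $-1$.

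For part (1) I would let $G_1$ be Alon's graph on $n$ vertices. Alon already provides an explicit orthonormal representation of dimension $\Theta(n^{2/3})$ realising $\vartheta(G_1) = \Theta(n^{2/3})$ together with the matching lower bound $\alpha(G_1) = \Theta(n^{2/3})$, so only the complement-rank statement remains. For this, I would compute the character sums $\widehat{\mathbf{1}_S}(\chi)$ associated with Alon's (algebraically defined) connecting set and verify that none of them equals $-1$; this makes $A_{G_1} + I$ nonsingular and gives rank exactly $n$. The expected mechanism is that Alon's connecting set is preserved by a large group of symmetries, forcing each character sum to lie in a restricted algebraic set that excludes $-1$.

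For part (2) I would take $G_2$ to be the CPR Cayley graph on $\mathbb{F}_2^k$ with $n = 2^k$. For the rank bound, I would evaluate the Walsh--Hadamard transform of the indicator of the CPR connecting set (built from a linear code) using the code's weight enumerator, and count precisely how many of the $2^k$ Fourier coefficients equal $-1$; the target is $(5/8+o(1)) n$, yielding $\operatorname{rank}(A_{G_2}+I) = (3/8+o(1))n$. For the $\operatorname{msr}$ lower bound I would invoke the sandwich $\operatorname{msr}(G_2) \ge \vartheta(G_2)$, and then use the Hoffman-type formula valid for vertex-transitive graphs,
\[
    \vartheta(G_2) \;=\; \frac{-n\,\lambda_{\min}(G_2)}{d - \lambda_{\min}(G_2)},
\]
where $d$ is the degree. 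Showing $\vartheta(G_2) \ge n/2$ thus reduces to showing $\lambda_{\min}(G_2) \le -d/(1+o(1))$, i.e.\ that the CPR graph is spectrally close to bipartite---again, a statement about a particular character sum over the underlying code.

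The main obstacle in both halves is the exact character-sum arithmetic, not any soft structural step. Concretely, one must show in part (1) that Alon's algebraic connecting set has \emph{no} character sum hitting the single value $-1$ (a genuine arithmetic constraint, not just a bound), and in part (2) that the CPR connecting set produces both the precise $5/8$ fraction of $-1$ eigenvalues and a $\lambda_{\min}$ close to $-d$. A secondary subtlety is the restriction to ``infinitely many $n$'': the CPR family is indexed by $k$, while Alon's is indexed by a prime power parameter, so the claim is really made along these subsequences, which is the natural regime for the character-sum analysis anyway.
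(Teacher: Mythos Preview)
Your approach to part (1) is essentially the paper's: take Alon's Cayley graph on $\mathbb{Z}_2^{3k}$ and use the Fourier picture to show $-1$ is never an eigenvalue. The actual mechanism is much cleaner than the ``large symmetry group'' you anticipate, though. Alon's connecting set is a sumset $U_0+U_1$, so each eigenvalue factors as $\lambda_D = \bigl(\sum_{u\in U_0}\chi_D(u)\bigr)\bigl(\sum_{u\in U_1}\chi_D(u)\bigr)$; since $|U_1|=2^{k-1}$ is even and each summand is $\pm 1$, the second factor is an even integer, hence every eigenvalue of $A_{G_1}$ is even and $-1$ never occurs. That is the whole argument.

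Part (2), however, is based on the wrong family. The Codenotti--Pudl\'ak--Resta graphs are \emph{not} Cayley graphs on $\mathbb{F}_2^k$ (they are built from the edge set of a girth-$8$ bipartite graph), and more importantly they satisfy $\rank(A_G - 2I)=\mathcal{O}(n^{3/4})$, i.e.\ the eigenvalue with high multiplicity is $2$, not $-1$. This forces $\rank(A_G+I)\ge n-\rank(A_G-2I)=(1-o(1))n$, the \emph{opposite} of the $(3/8+o(1))n$ you need. So no character-sum computation on the CPR family can deliver the stated conclusion. The paper instead takes $G_2 = \mathcal{C}\otimes K_l$ (Clebsch tensor a clique), for which $\rank(A_{G_2}+I)=6l+10=(3/8+o(1))\cdot 16l$ was already computed in the base-graph section; this graph is connected and triangle-free, and the $\msr$ lower bound then comes for free from Deaett's theorem that $\msr(G)\ge n/2$ for every connected triangle-free graph --- no $\vartheta$ or $\lambda_{\min}$ computation is needed.
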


The first result comes from an
explicit construction of triangle-free Ramsey graphs, due to 
Alon \cite{alon1994explicit}. These 
$n$-vertex graphs have $\vartheta(G)=\Theta(n^{2/3})$, so
by the Sandwich Theorem, these are Ramsey graphs.
The bound on $\vartheta(G)$ is best possible, so they are in fact
optimal $\vartheta$-Ramsey graphs.\footnote{
By $\vartheta$-Ramsey we mean graphs with small clique number and low 
\textit{Lov{\'a}sz number}, rather
than \textit{independence number}.} However, as
we show, these graphs are as bad as possible for the Rank-Ramsey property 
in that they satisfy $\rank(A_G + I) = n$.
These are Cayley graphs of the Abelian group $\mathbb{Z}_N$, 
and so our proof is Fourier-analytic.
As for the latter result, concerning $\msr(G)$, 
the lower bound follows from a result of Deaett \cite{deaett2011minimum},
and our converse follows from the Clebsch-derived family of graphs.

The relations between minimum semi-definite rank, Lov{\'a}sz number, and complement rank
are still not sufficiently well-understood.
We observe several similarities between the Lov{\'a}sz number and the rank of
\mbox{$A_G+I$}:
they are derived from closely related matrices, both are
multiplicative in the strong graph product, and both
bound the Shannon capacity from above.
The possibility that $\rank(A_G + I) \ge \Omega(\vartheta(G))$ is especially intriguing,
and we give some supporting evidence in this direction.
For example, in every graph with fewer than $10$ vertices
the Lov{\'a}sz number is at least as big as the complement rank.
We know of no counterexample.
We remark that, if this holds true, then Alon and Kahale’s \cite{alon1998approximating}
extension of Kashin and Konyagin’s bound \cite{kashin1981systems}
on the Lov{\'a}sz number of graphs with bounded independence number
implies a polynomial improvement in our lower-bounds on $\nu_d(n)$, for every positive constant $d$.
In \Cref{sect:hoffman_cvetkovic_lovasz_ramsey} we draw connections between this problem
and other well-known questions in algebraic
graph theory, related to the Hoffman and Cvetkovi\'c bounds on the independence number. \\

More key results in Ramsey theory,
and regarding the log-rank conjecture,
can be observed through the lens of the Rank-Ramsey problem. 
For instance, there are additional interesting \textit{explicit} constructions of triangle-free Ramsey graphs,
such as Frankl and Wilson's set intersections \cite{frankl1981intersection}, or Chung's fibrations \cite{chung1993note}.
They too can be shown not to be Rank-Ramsey. 
In Appendix \ref{sect:2i_construction} we sketch a shortened version of a noteworthy elementary construction,
due to Codenotti, Pudl{\'a}k and Resta \cite{codenotti2000some}, 
yielding triangle-free Ramsey graphs with $\rank(A_G - 2I) = \mathcal{O}(n^{3/4})$.
These graphs appear close in spirit to our Rank-Ramsey problem\footnote{
Of course, $\rank(A - 2I) = o(n)$ implies $\rank(A_G + I) = (1-o(1))n$, i.e., these too are far from Rank-Ramsey.},
with one key difference,
which we believe to be central: $A_G - 2I$ is not binary.
This is crucially important with regards to the log-rank conjecture,
where it is well known how to construct
non-binary matrices with \textit{constant rank} and \textit{full partition number}. 

We also give a brief analysis of the Nisan-Wigderson construction \cite{nisan1995rank}
of matrices exhibiting a gap in the log-rank problem.
We show that these matrices have large monochromatic principal minors, 
and therefore yield poor Rank-Ramsey graphs as-is.
We stress that this is no contradiction:
exhibiting a log-rank separation does not preclude a matrix from
\textit{having} a large monochromatic rectangle.\footnote{
However, due to the relation between log-rank and Rank-Ramsey, a graph family \textit{related} to said matrices
must contain large Rank-Ramsey \textit{subgraphs}.}

\subsection{Paper Organization}

\paragraph{Log-Rank.} To start we introduce
two central topics that permeate this paper:
Matrix-theoretic lifts (\Cref{subsect:matrix_view_lifts}) and Rank-Ramsey graphs 
(\Cref{subsect:rank_ramsey}).
In \Cref{subsect:rank_ramsey} we also introduce the Rank-Ramsey numbers, and 
compute them precisely for graphs of low complement rank.
In \Cref{sect:rank_ramsey_and_log_rank}, we draw connections between the Rank-Ramsey problem
and the log-rank conjecture in communication complexity.

\vspace{-0.2cm}\paragraph{Constructions.} \Cref{sect:kron_powers} and \Cref{sect:ae_lift} are dedicated to constructions of Rank-Ramsey families.
The first utilizes minors of Kronecker powers of families derived from strongly-regular graphs with particular properties,
and yields a \textit{polynomial separation} between complement rank and order for graphs with \textit{constant} clique number.
The latter relies on matrix-theoretic lifts of Erd\H{o}s-R\'enyi graphs with the well-known Boolean function $\NAE$,
and produces graphs whose clique number is \textit{logarithmic}.

\vspace{-0.2cm}\paragraph{Analysis and Bounds.} \Cref{sect:lb_sect} revolves 
around lower bounds
on the complement rank of triangle-free Rank-Ramsey graphs.
There, we draw connections to other known graph parameters, and also analyse the two best known explicit constructions of triangle-free Ramsey graphs, due to Alon \cite{alon1994explicit},
and Codenotti, Pudl{\'a}k and Resta \cite{codenotti2000some}, showing they are far from Rank-Ramsey.
In \Cref{sect:nw_analysis} we consider the Nisan-Wigderson construction \cite{nisan1995rank}, which first
exhibited a polynomial separation for the log-rank conjecture, from a Rank-Ramsey perspective.
We find large monochromatic principal minors in said matrices, implying that they induce poor Rank-Ramsey graphs.

\section{Preliminaries}
\label{sect:perlims}

\paragraph{Graphs.}

By default, all graphs in this paper are undirected and simple.
We occasionally also consider directed graphs, 
as well as graphs with self-loops.
The \textit{order} of a graph, $v(G)$, is the number of vertices,
and its \textit{size}, $e(G)$, is the number of edges.
If two vertices $u,v \in V(G)$ are adjacent, we denote this by
$u \sim_G v$ and resp.\ $u \to_G v$ (in the undirected, resp.\ 
directed case). The subscript $G$ is omitted
when the graph is clear from the context. 

Associated with every
order-$n$ graph $G = ([n], E)$ is its \textit{adjacency matrix}:
\[
    A_G \in M_n(\RR), \text{ where } \forall i,j \in [n]: (A_G)_{i,j} = \mathbbm{1} \left\{ i \sim j \right\} \ \ \text{(respectively, $i \to j$, if $G$ is directed)}
\]

The \textit{spectrum} of a graph, denoted $\spec(G)$, is the multiset of eigenvalues corresponding to its adjacency matrix.
If $G$ is undirected, its spectrum is real (since $A_G$ is symmetric).
The multiplicity of an eigenvalue $\lambda$ in a
(diagonalizable) matrix $A$ is denoted by $\mu_A(\lambda)$. 
The \textit{rank} of a graph is the real rank of its adjacency.

The subgraph induced by a set of vertices $S \subseteq V(G)$ is denoted $G[S]$.
The \textit{clique number}, the \textit{independence number}, and \textit{chromatic number}
of a simple graph $G$ are denoted $\omega(G)$, $\alpha(G)$ and $\chi(G)$, respectively.
We denote the complement of a graph $G$, by $\overline{G}$. The adjacency matrix
of $\overline{G}$ is $J - (A_G + I)$.
Observe that
\[
    \big| \rank(\overline{G}) - \rank(A_G + I) \big| \le 1,
\]
therefore, where an additive error of $\pm 1$ is insignificant, we sometimes
refer to $\rank(A_G+I)$ as the ``complement rank'' of $G$.\\
A {\em blowup} of a graph $G$ is 
a graph attained by replacing each vertex 
by a nonempty anticlique 
and each edge by a complete bipartite graph.
If $w(i)$ is the size of the anticlique that
replaces vertex $i$, we denote the resulting
graph by $G^w$. For matrices, the blowup $A^w$ of a matrix $A$ replaces each entry $A_{i,j}$
with a $(w(i) \times w(j))$-block of entries $A_{i,j}$ 
(so $A_{G^w} = A_G^w$).
Importantly, $\rank(A^w) = \rank(A)$ for any $A$ and $w$.\\
Two vertices are called \textit{twins} if they are non-adjacent and
have the same set of neighbours.
Discarding twins from a graph is called a \textit{reduction},
and is the inverse operation to blowup.
Both blowup and reduction affect neither the chromatic number, nor the rank of a graph.

\paragraph{Boolean Functions.}
It is well known that any Boolean function $f: \{0,1\}^n \to \{0,1\}$
is uniquely representable as a multilinear polynomial over the Reals, viz.,
\[
    f(x) = \sum_{S \subseteq [n]} a_S \prod_{i \in S} x_i \in \RR[x_1, \dots, x_n]
\]

As usual, the \textit{support} of $f$ is the index set of
the non-zero coefficients in this representation, denoted
$\mon(f) \eqdef \{ S : \emptyset \ne S \subseteq [n], a_S \ne 0\}$.
We denote the size of $f$'s support by $\spar(f) \eqdef |\mon(f)|$,
and the \textit{degree} of $f$ is $\deg(f) \eqdef \max \{|S| : S \in \mon(p)\}$.
By convention, the constant functions 
$f \equiv 0$ and $f \equiv 1$ are defined to have degree $0$.

\paragraph{Matrices.} As usual, the identity matrix is denoted $I$, and the all-ones matrix $J$.
The trace of a square matrix is the sum of entries on its main diagonal.
Aside from the regular matrix product, we use the
Kronecker and Hadamard products which we now recall.
Let $A,C \in M_{m \times n}(\RR)$, and $B \in M_{k \times l}(\RR)$ be three matrices.
The Kronecker product $A \otimes B \in M_{mk \times nl}(\RR)$ and Hadamard (element-wise) product 
$A \odot C \in M_{m \times n}(\RR)$ are defined by:
\[
    (A \otimes B)_{i,j,x,y} \eqdef A_{i,j} B_{x,y} , \text{ and } (A \odot C)_{i,j} \eqdef A_{i,j} C_{i,j}   
\]

It is well known that rank is multiplicative under the Kronecker product, and sub-multiplicative under the Hadamard product.
We also require the standard \textit{Kronecker} and the \textit{Strong} graph products.
\begin{definition}
    \label{defn:graph_kron}
    The Kronecker Product, $G \otimes H$, and the Strong Product, $G \boxtimes H$, of two simple graphs $G=([n],E_G)$ and $H=
    ([k],E_H)$, are graphs with vertex set $[n] \times [k]$, where
    \begin{align*}
        (i,a) \underset{G \otimes H}{\widesim{}} (j,b) &\iff (i \underset{G}{\widesim{}} j) \land (a \underset{H}{\widesim{}} b) \\
        \text{and} \quad\quad\quad\quad\quad\quad\quad\quad\quad\quad\quad\quad\quad\quad\quad& \\ 
        (i,a) \underset{G \boxtimes H}{\widesim{}} (j,b) &\iff (i = j \land a \underset{H}{\widesim{}} b) \lor (a = b \land i \underset{G}{\widesim{}} j) \lor \left((i \underset{G}{\widesim{}} j) \land (a \underset{H}{\widesim{}} b) \right)
    \end{align*}
    In other words, their adjacency matrices are $A_G \otimes A_H$ and $A_G \otimes A_H + A_G \otimes I_k + I_n \otimes A_H$, respectively. 
\end{definition}

Unlike the usual notion of matrix minors, \textit{Generalised Minors}
allow repeated indices.

\begin{definition}
    \label{defn:generalized_minors}
    Let $A \in M_{k}(\RR)$ be a matrix.
    The Generalised Minor of $A$ corresponding to the indices $(i_1, \dots, i_d) \in [k]^d$
    is the matrix $A[i_1, \dots, i_d] \in M_{d}(\RR)$ defined by
    \[
         \left(A[i_1, \dots, i_d]\right)_{s,t} \eqdef A_{i_s, i_t}, 
    \]
    for every $s,t \in [d]$.
\end{definition}

Note that the \textit{rank} of a generalised minor cannot exceed the rank of its ``parent'' matrix.

\begin{proposition}
    \label{prop:rank_of_gen_minor}
    If $A \in M_{k}(\RR)$ be a matrix and $(i_1, \dots, i_d) \in [k]^d$
    are indices, then \[\rank(A[i_1, \dots, i_d]) \le \rank(A)\]
\end{proposition}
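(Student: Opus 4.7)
The plan is to exhibit the generalised minor as a two-sided product of $A$ with a simple selector matrix, and then invoke the standard submultiplicativity of rank under matrix multiplication.

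Concretely, I would define the selector matrix $S \in \{0,1\}^{d \times k}$ by $S_{s,j} \eqdef \mathbbm{1}\{j = i_s\}$, i.e.\ the $s$-th row of $S$ is the indicator of the singleton $\{i_s\}$. Multiplying $A$ on the left by $S$ picks out rows $i_1, \dots, i_d$ of $A$ (with repetitions allowed), and multiplying on the right by $S^T$ picks out the corresponding columns. A direct entry-wise check gives
\[
    \bigl(S A S^T\bigr)_{s,t} \;=\; \sum_{j,j' \in [k]} S_{s,j}\, A_{j,j'}\, S_{t,j'} \;=\; A_{i_s, i_t} \;=\; \bigl(A[i_1,\dots,i_d]\bigr)_{s,t},
\]
so $A[i_1, \dots, i_d] = S A S^T$.

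From here the conclusion is immediate: rank is submultiplicative under the ordinary matrix product, so
\[
    \rank\bigl(A[i_1,\dots,i_d]\bigr) \;=\; \rank(SAS^T) \;\le\; \rank(A).
\]

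There is no real obstacle here; this is essentially a bookkeeping statement, and the only ``choice'' is which presentation to give. An equally clean alternative would be to argue directly in two steps: first, the matrix $B \in M_{d \times k}(\RR)$ whose $s$-th row is row $i_s$ of $A$ has rank at most $\rank(A)$ because its rows lie in the row space of $A$; second, $A[i_1, \dots, i_d]$ is obtained from $B$ by selecting the columns $i_1, \dots, i_d$, whose columns then lie in the column space of $B$, giving rank at most $\rank(B) \le \rank(A)$. I would opt for the $SAS^T$ formulation since it is the most compact and also foreshadows the matrix-theoretic view of lifts flagged in \Cref{subsect:matrix_view_lifts}.
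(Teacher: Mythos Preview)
Your proof is correct and essentially identical to the paper's: the paper defines a $d \times k$ binary matrix $P$ with $P_{x,y}=1$ iff $i_x=y$ (your $S$), observes $A[i_1,\dots,i_d]=PAP^T$, and concludes via $\rank(PAP^T)\le\rank(A)$.
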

\begin{proof}
Let $P$ be a $d \times k$
binary matrix with a $1$ in position $(x,y)$ iff $i_x = y$.
The claim follows from the observation that $A[i_1, \dots, i_d]$ coincides with
$P A P^T$, and the fact that $\rank(P A P^T)\le \rank(A)$.\end{proof}

\paragraph{Communication Complexity.}
We follow the standard notation and terminology 
in communication complexity, as introduced in the excellent book
\cite{kushilevitz1997communication}.
Let $A \in M_{m \times n}(\RR)$ be a binary matrix.
A \textit{combinatorial rectangle} of $A$ is a minor with
row set $R \subseteq [m]$ and column set $C \subseteq [n]$.
For a bit $b \in \{0,1\}$, a $b$-\textit{rectangle cover} of $A$ is a set 
of $b$-monochromatic combinatorial rectangles whose
union comprises all $b$-entries in $A$.
The $b$-cover number of $A$, and its $b$-nondeterministic communication,
are defined as follows:\footnote{All logarithms in this paper are \textit{binary} (base 2), unless stated otherwise.}
\[
    \chi^b(A) \eqdef \min_{\mathcal{R}} |\mathcal{R}|, \text{ and } \mathrm{N^b}(A) \eqdef \lceil \log \chi^{b}(A) \rceil
\]
where the minimum is taken over all $b$-rectangle covers of $A$.

As usual, the \textit{deterministic} communication complexity of $\DCC(A)$,
is the least cost of a deterministic communication protocol computing $A$
(where the cost of a protocol is the maximum, over all inputs, of the number of bits transmitted).
\subsection{Matrix Representation of Lifts}
\label{subsect:matrix_view_lifts}

\textit{Lifting} is a powerful
technique in the study of communication 
complexity, first introduced by Raz and McKenzie 
\cite{raz1997separation}. An application of this method
starts with two Boolean functions, \mbox{$f: \{0,1\}^n \to \{0,1\}$} and 
$g: \{0,1\}^b \times \{0,1\}^b \to \{0,1\}$. Typically $b$ is much
smaller than $n$, and $g$ is commonly called a \textit{gadget}.
The corresponding lift is the composition 
$f \circ g^n: (\{0,1\}^{b})^n \times (\{0,1\}^{b})^n \to \{0,1\}$:
\[
    (f \circ g^n) \big( (x_1, y_1), \dots, (x_n, y_n) \big) \eqdef f \big( g(x_1, y_1), \dots, g(x_n, y_n) \big),
\]
where $x_i, y_j\in \{0,1\}^b$.

A lifting theorem establishes a relation between
the \textit{communication complexity} of 
$f \circ g^n$, the \textit{query complexity} of $f$ and some 
property of $g$. This is done
for a particular choice of query and communication models. The logic
behind such theorems is this: Generally speaking it is hard to prove
lower bounds for a communication model. Lifting theorems accomplish this task
by relying on a lower bound in a query model, which is easier to come by.
There are numerous examples of such theorems, c.f. \cite{raz1997separation, goos2015deterministic, goos2016rectangles, chattopadhyay2019simulation, goos2017query}.

\textit{We take a matrix-theoretic view of lifts}.
Rather than compose \textit{two} Boolean functions, we compose 
a Boolean function with a \textit{collection} of binary matrices,

\begin{definition}
    \label{defn:f_lift}
    Let $A_1, \dots, A_n \in M_{m \times k}(\RR)$ be binary matrices and let $f: \{0,1\}^n \to \{0,1\}$ be a Boolean function. The $f$-lift of $A_1, \dots, A_n$, denoted $f \big( A_1, \dots, A_n \big)$, is the $m^n \times k^n$ Boolean matrix, 
    \[
        f \big( A_1, \dots, A_n \big)_{i, j} \eqdef f\big( (A_1)_{i_1, j_1}, \dots, (A_n)_{i_n, j_n} \big), 
    \]
    for $i = (i_1,\ldots,i_n) \in [m]^n$ and $j = (j_1,\ldots,j_n) \in [k]^n$.
\end{definition}

This notion of lifting binary matrices with a Boolean function $f: \{0,1\}^n \to \{0,1\}$
is intimately related to the multilinear representation of $f$
over the Reals, and to the Kronecker product of matrices.

\begin{proposition}
    \label{prop:matrix_repr_of_lift}
    Let $A_1, \dots, A_n \in M_{m \times k}(\RR)$ be binary matrices and let $f: \{0,1\}^n \to \{0,1\}$
    be a Boolean function whose multilinear expansion is
    $f(x) = \sum_{S \subseteq[n]} a_S \cdot \prod_{l \in S} x_l \in \RR[x_1, \dots, x_n]$.
    Then, the $f$-lift of $A_1, \dots, A_n$ can be written as:
    \[
        f \big( A_1, \dots, A_n \big) = \sum_{S \subseteq [n]} a_S \cdot \underset{l \in [n]}{\Motimes} Z^S_l 
    \]
    where $J_{m \times k}$ is the $(m \times k)$ all-ones matrix and $Z^S_l = \begin{cases} A_l & l \in S \\ J_{m \times k} & l \notin S \end{cases}$
\end{proposition}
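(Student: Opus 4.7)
The plan is to verify the identity entrywise: fix an arbitrary pair of multi-indices $i=(i_1,\dots,i_n)\in[m]^n$ and $j=(j_1,\dots,j_n)\in[k]^n$, and show that the $(i,j)$-entries of the left-hand and right-hand sides agree. This reduces the matrix identity to a scalar identity, which can then be read off directly from the multilinear expansion of $f$.

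First I would expand the left-hand side. By \Cref{defn:f_lift}, the $(i,j)$-entry of $f(A_1,\dots,A_n)$ equals $f\bigl((A_1)_{i_1,j_1},\dots,(A_n)_{i_n,j_n}\bigr)$. Since each $(A_l)_{i_l,j_l}$ is a scalar in $\{0,1\}$, I can substitute these values into the multilinear expansion of $f$ over $\RR$, obtaining
\[
    f(A_1,\dots,A_n)_{i,j} \;=\; \sum_{S \subseteq [n]} a_S \prod_{l \in S} (A_l)_{i_l,j_l}.
\]
Here it is crucial that the multilinear polynomial representation holds identically on $\{0,1\}^n$, so it yields the correct value even though it is a real polynomial.

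Next I would expand the right-hand side. By the standard definition of the Kronecker product of $n$ matrices indexed over $[m]^n \times [k]^n$, the $(i,j)$-entry of $\bigotimes_{l\in[n]} Z^S_l$ is the product $\prod_{l\in[n]} (Z^S_l)_{i_l,j_l}$. By the definition of $Z^S_l$, the factors with $l \notin S$ contribute $(J_{m\times k})_{i_l,j_l}=1$, while the factors with $l \in S$ contribute $(A_l)_{i_l,j_l}$. Therefore
\[
    \Bigl(\Motimes_{l\in[n]} Z^S_l\Bigr)_{i,j} \;=\; \prod_{l \in S} (A_l)_{i_l,j_l}.
\]
Multiplying by $a_S$ and summing over $S$ yields exactly the same expression as for the left-hand side, completing the entrywise match.

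There is no real obstacle here beyond bookkeeping: the only thing that needs care is the indexing convention for the iterated Kronecker product (so that the factor corresponding to coordinate $l$ is indeed indexed by $(i_l,j_l)$), and the fact that the all-ones matrix $J_{m\times k}$ acts as the multiplicative identity in each tensor slot where $l \notin S$. Once these conventions are fixed, the proposition follows immediately from the uniqueness of the multilinear representation of $f$ over $\RR$.
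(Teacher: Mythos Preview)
Your proposal is correct and follows essentially the same approach as the paper: an entrywise verification using the definition of the Kronecker product and the multilinear expansion of $f$. The only cosmetic difference is that the paper expands the right-hand side first and recognizes it as the left-hand side, whereas you expand both sides separately and match them.
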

\begin{proof}
    By the definition of the Kronecker product, $\forall (i,j) \in [m]^n \times [k]^n$ we have:
    \begin{align*}
        \left(\sum_{S \subseteq [n]} a_S \cdot \underset{l \in [n]}{\Motimes} \begin{cases} A_l & l \in S \\ J_{m \times k} & l \notin S \end{cases} \right)_{i,j} &= \sum_{S \subseteq [n]} a_S \cdot \prod_{l \in [n]} \left(\begin{cases} A_l & l \in S \\ J & l \notin S \end{cases} \right)_{i_l, j_l} \\
        &= \sum_{S \subseteq [n]} a_S \cdot \prod_{l \in S} (A_l)_{i_l, j_l} \\
        &= f \big( (A_1)_{i_1, j_1}, \dots, (A_n)_{i_n, j_n} \big) = f \big( A_1, \dots, A_n \big)_{i,j} \qedhere
    \end{align*}
\end{proof}

This relationship implies the following powerful bound on the rank of a lifted matrix. 

\begin{lemma}
    \label{lem:rank_of_lift}
    Let $f: \{0,1\}^n \to \{0,1\}$ be a Boolean function and let $A_1, \dots, A_n \in M_{m \times k}(\RR)$ be binary matrices. Then,
    \[
        \rank \left( f(A_1, \dots, A_n) \right) \le \sum_{S \in \mon(f)} \prod_{i \in S} \rank(A_i)
    \]
\end{lemma}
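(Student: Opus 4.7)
The plan is to reduce \Cref{lem:rank_of_lift} to the combination of three elementary facts: the explicit decomposition from \Cref{prop:matrix_repr_of_lift}, subadditivity of rank under matrix sums, and multiplicativity of rank under the Kronecker product. I do not anticipate a real obstacle here; the lemma is essentially a direct corollary of the preceding proposition once one reads off the rank of each summand.

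More concretely, I would start by invoking \Cref{prop:matrix_repr_of_lift} to write
\[
    f(A_1,\dots,A_n) \;=\; \sum_{\substack{S \subseteq [n] \\ a_S \ne 0}} a_S \cdot \underset{l \in [n]}{\Motimes} Z^S_l,
\]
so that by the triangle inequality for rank we have $\rank(f(A_1,\dots,A_n)) \le \sum_{S : a_S \ne 0} \rank\bigl(\bigotimes_l Z^S_l\bigr)$. I would then compute the rank of each Kronecker-product summand using multiplicativity: since $Z^S_l = A_l$ for $l \in S$ and $Z^S_l = J_{m \times k}$ (a rank-$1$ matrix) otherwise, one gets
\[
    \rank\!\left(\underset{l \in [n]}{\Motimes} Z^S_l\right) \;=\; \prod_{l \in [n]} \rank(Z^S_l) \;=\; \prod_{l \in S} \rank(A_l).
\]
Plugging this back in yields the stated bound, where the sum is restricted to $S \in \mon(f)$ (the nonzero, nonempty coefficients; the empty-set term, when present, contributes only the rank-$1$ all-ones matrix and is either benign or can be absorbed into the indexing convention).

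The only point that deserves a second look is the interaction between the scalar coefficients $a_S$ and the rank bound: since $a_S$ is a scalar and $\rank(a_S M) = \rank(M)$ whenever $a_S \ne 0$, the coefficients simply drop out of the rank estimate, which is why the final bound depends on $f$ only through $\mon(f)$ rather than through the magnitudes of the $a_S$. This is the sole place where the multilinear representation of $f$ over $\RR$ enters nontrivially, and it is what makes the bound clean.
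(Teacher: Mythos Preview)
Your proposal is correct and matches the paper's own proof essentially line for line: invoke \Cref{prop:matrix_repr_of_lift}, apply subadditivity of rank to the sum, multiplicativity of rank under the Kronecker product to each term, and use $\rank(J_{m\times k})=1$ to collapse the factors with $l\notin S$. The paper's argument is slightly terser (it does not separately discuss the scalar coefficients or the empty-set term), but there is no substantive difference.
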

\begin{proof}
    Let $\sum_{S \in \mon(f)} a_S \prod_{i \in S} x_i$ be the multilinear representation of $f$ over the Reals. By \Cref{defn:f_lift},
    \begin{align*}
        f(A_1, \dots, A_n) = \sum_{S \in \mon(f)} a_S \bigotimes_{i \in [n]} \begin{cases} A_i & i \in S \\ J_{m \times k} & i \notin S \end{cases}
    \end{align*}

    The proof now follows by recalling that $\rank(J) = 1$, and that rank is subadditive, and multiplicative under the Kronecker product.
\end{proof}

\subsection{Rank-Ramsey Graphs}
\label{subsect:rank_ramsey}

The Ramsey number $R(s,t)$ is the smallest $n$ such
that every graph $G$ of order $n$ contains either an \mbox{$s$-clique}
or an $t$-anticlique. In other words, either $\omega(G)\ge s$ or
$\alpha(G)\ge t$. Observe that $\rank(A_G + I)\ge \alpha(G)$,
since the minor corresponding to an anticlique in $G$ is
all-zeros in $A_G$, and an identity submatrix in $A_G + I$.
This suggests a search for graphs where both the clique number $\omega(G)$
and $\rank(A_G + I)$ are small.
We call graphs with such properties \textit{Rank-Ramsey}.
To proceed, we introduce some notation.

\begin{definition}
    For every $d \ge 1$, let $\nu_d: \mathbb{N} \to \NN$ be the function
    \[
        \nu_d(n) \eqdef \min_{G} \rank( A_G + I )
    \]
    minimizing over all order-$n$ graphs $G$ with $\omega(G) \le d$.
\end{definition}

A similar notion for directed graphs is of interest as well,

\begin{definition}
    For every $d \ge 1$, let $\eta_d: \mathbb{N} \to \NN$ be the function
    \[
        \eta_d(n) \eqdef \min_{A \in M_n(\RR)} \rank( A + I )
    \]
    minimizing over all $n \times n$ binary matrices $A$ with zeros on the main diagonal, 
    and such that $(A+I)$ has no $J_{d+1}$ principal minor.
\end{definition}

There is a simple relation between Ramsey numbers and the function $\nu_d$: 
\begin{proposition}
    \label{prop:nu_vs_R}
    For any two positive integers $d$ and $n$, there holds $n < R(d+1, \nu_d(n) + 1)$. 
\end{proposition}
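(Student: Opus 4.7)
The plan is to unpack the definitions of both sides and exhibit a witnessing graph. By definition, $R(d+1, \nu_d(n)+1)$ is the least $N$ such that every $N$-vertex graph contains either a $(d+1)$-clique or a $(\nu_d(n)+1)$-anticlique. Equivalently, the inequality $n < R(d+1, \nu_d(n)+1)$ is just the statement that there exists an $n$-vertex graph $G$ with $\omega(G)\le d$ and $\alpha(G)\le \nu_d(n)$.

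To produce such a $G$, I would take the extremizer in the definition of $\nu_d(n)$: an $n$-vertex graph $G$ with $\omega(G)\le d$ and $\rank(A_G+I)=\nu_d(n)$. The first clique-avoidance condition is immediate from the definition. For the anticlique bound, I would invoke the observation made in the introduction and in \Cref{subsect:rank_ramsey}: an anticlique of size $\alpha$ in $G$ corresponds to an $\alpha\times\alpha$ principal submatrix of $A_G+I$ that is the identity matrix $I_\alpha$, which has rank $\alpha$. Since the rank of a principal submatrix cannot exceed the rank of the full matrix (a special case of \Cref{prop:rank_of_gen_minor}), this gives $\alpha(G)\le \rank(A_G+I)=\nu_d(n)$.

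Combining the two bounds, $G$ contains neither a $(d+1)$-clique nor a $(\nu_d(n)+1)$-anticlique, so $n$ is strictly below the Ramsey threshold, finishing the proof. There is no real obstacle here — the statement is essentially a reformulation of the inequality $\alpha(G)\le \rank(A_G+I)$ in Ramsey-theoretic language, so the only thing to be careful about is keeping track of the off-by-one conventions in the definition of $R(\cdot,\cdot)$ and $\nu_d$.
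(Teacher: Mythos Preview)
Your proof is correct and follows exactly the same approach as the paper: take a graph $G$ attaining $\nu_d(n)$, observe that $\omega(G)\le d$ by definition and $\alpha(G)\le\rank(A_G+I)=\nu_d(n)$ via the identity-submatrix argument, and conclude that $G$ witnesses $n<R(d+1,\nu_d(n)+1)$.
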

\begin{proof}
    If $G$ is a graph attaining $\nu_d(n)$, then
    by definition $\omega(G) < d+1$. Also, as mentioned, $\alpha(G) < \rank(A_G + I) + 1 = \nu_d(n) + 1$. Therefore, the order of $G$
    is smaller than $R(d+1, \nu_d(n) + 1)$, as claimed. 
\end{proof}
\begin{corollary}[\cite{lee2023around}]
    \label{cor:nu_vs_ramsey}
    For any two positive integers $n > d$, there holds
    \[
        \nu_d(n) = \Omega \left( n^{1/d} \cdot \left( \frac{\log n}{d}\right)^{\tfrac{d-1}{d}} \right)
    \]
\end{corollary}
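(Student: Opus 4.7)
The plan is to combine \Cref{prop:nu_vs_R} with a quantitative upper bound on the off-diagonal Ramsey numbers. By \Cref{prop:nu_vs_R}, we have $n < R(d+1,\,\nu_d(n)+1)$, so any good upper bound on $R(s,t)$ translates directly into a lower bound on $\nu_d(n)$. The most convenient such bound for our purposes is of the Ajtai--Koml\'os--Szemer\'edi type,
\[
    R(s, t) \;\le\; C \cdot \frac{t^{\,s-1}}{(\log t)^{s-2}},
\]
valid for $s \ge 3$ and $t$ sufficiently large, where the constant $C$ may be taken polynomial in $s$. Substituting $s = d+1$ and $t = \nu_d(n)+1$ yields an implicit inequality of the form
\[
    n \;\le\; C \cdot \frac{(\nu_d(n)+1)^{d}}{\bigl(\log(\nu_d(n)+1)\bigr)^{d-1}}.
\]

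The next step is to solve for $\nu_d(n)$. Rearranging gives
\[
    \nu_d(n) \;\ge\; \Omega\!\left( n^{1/d} \cdot \bigl(\log \nu_d(n)\bigr)^{(d-1)/d} \right),
\]
so the final task is to replace the logarithm of $\nu_d(n)$ on the right-hand side with one of $n$. For this I would bootstrap using the trivial bound $\nu_d(n) \ge \Omega(n^{1/d})$, which already follows from the weaker Erd\H{o}s--Szekeres bound $R(s,t) \le \binom{s+t-2}{s-1}$ applied to \Cref{prop:nu_vs_R}. Taking logarithms of this crude estimate gives $\log \nu_d(n) \ge \Omega(\log n / d)$, and plugging this back into the displayed inequality yields
\[
    \nu_d(n) \;\ge\; \Omega\!\left( n^{1/d} \cdot \left( \frac{\log n}{d} \right)^{(d-1)/d} \right),
\]
which is exactly the claim.

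The main obstacle is not the manipulation but the $d$-dependence of the constant in the Ramsey bound: if one invoked a bound of the shape $R(s,t) \le c^{\,s} \cdot t^{s-1}/(\log t)^{s-2}$ with $c > 1$, the exponential factor in $d$ would erode the advertised logarithmic gain for large $d$. Hence it is important to appeal to a version of the Ajtai--Koml\'os--Szemer\'edi bound whose prefactor is at most polynomial in $s$ (such forms are available in the literature), so that after taking $d$-th roots the constant is absorbed into the $\Omega(\cdot)$ and the $1/d$ factor inside the logarithm emerges cleanly from the bootstrap. Once that is arranged, the remainder is a short calculation.
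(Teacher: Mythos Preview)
Your approach is exactly the paper's: it simply says ``apply the classical bounds on Ramsey numbers, due to Ajtai, Koml\'os and Szemer\'edi,'' and you have correctly spelled out the intended one-line argument via \Cref{prop:nu_vs_R}. One small remark: your worry about an exponential prefactor is unfounded, since a bound $R(d+1,t) \le c^{d}\, t^{d}/(\log t)^{d-1}$ yields, after taking $d$-th roots, $\nu_d(n) \ge c^{-1}\, n^{1/d}(\log \nu_d(n))^{(d-1)/d}$, and $c^{-1}$ is an absolute constant absorbed by the $\Omega(\cdot)$ --- so any polynomial-or-exponential dependence on $d$ in the Ramsey constant is harmless here.
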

\begin{proof}
    Apply the classical bounds on Ramsey numbers, due to Ajtai, Koml{\'o}s and Szemer{\'e}di \cite{ajtai1980note}. 
\end{proof}

The following lemma aggregates several useful properties of our two quantities.

\begin{lemma}
    \label{lem:props_nu_eta}
    For every natural number $d$, it holds that:
    \begin{enumerate}
        \item
        Both quantities $\nu_{d}(n)$ and $\eta_{d}(n)$ are non-increasing in $d$.
        \item $\eta_d(n) \le \nu_d (n) \le \eta_d(n)^2$.
        \item Both $\nu_d$ and $\eta_d$ are submultiplicative: $\nu_d(k n) \le k \cdot \nu_d(n)$, and $\eta_d(k n) \le k \cdot \eta_d(n)$.
    \end{enumerate}
    and moreover, for every $d_1, d_2$ and $n_1, n_2$, we have:
    \begin{enumerate}[resume]
        \item $\nu_{d_1 d_2}(n_1 n_2) \le \nu_{d_1}(n_1) \nu_{d_2}(n_2)$, and similarly $\eta_{d_1 d_2}(n_1 n_2) \le \eta_{d_1}(n_1) \eta_{d_2}(n_2)$.
    \end{enumerate}
\end{lemma}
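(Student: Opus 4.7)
The plan is to handle each part by an explicit construction followed by a short clique/rank verification. Part (1) is immediate: enlarging $d$ only enlarges the feasible set of graphs (resp.\ matrices) over which the minimum is taken. Part (3) is also routine: take $k$ disjoint copies of an optimal graph for $\nu_d(n)$ (resp.\ the block-diagonal sum of $k$ copies of an optimal matrix for $\eta_d(n)$). This preserves both the clique number and the absence of a $J_{d+1}$ principal minor, while making $A+I$ block-diagonal with $k$ identical blocks, so its rank scales by exactly~$k$.

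For part (2), the inequality $\eta_d(n) \le \nu_d(n)$ is trivial because the adjacency matrix of any feasible undirected graph is a feasible matrix for $\eta_d$. For $\nu_d(n) \le \eta_d(n)^2$, given a matrix $A$ attaining $\eta_d(n)$, I plan to form the symmetric binary matrix $B \eqdef A \odot A^T$ and interpret it as the adjacency of an undirected graph. A clique $S$ in this graph is precisely a set of indices on which both $A$ and $A^T$ are all-ones off-diagonal, i.e., on which $A+I$ restricts to $J_{|S|}$; hence $\omega \le d$ by the hypothesis on $A$. The crux is the identity
\[
    B + I \;=\; (A + I) \odot (A^T + I),
\]
which is checked by inspecting diagonal and off-diagonal entries separately. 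Sub-multiplicativity of rank under the Hadamard product then yields $\rank(B+I) \le \rank(A+I) \cdot \rank(A^T+I) = \eta_d(n)^2$.

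For part (4), take the strong product $G_1 \boxtimes G_2$ of optimal graphs for $\nu_{d_i}(n_i)$. The definition recalled in the preliminaries gives $A_{G_1 \boxtimes G_2} + I = (A_{G_1}+I) \otimes (A_{G_2}+I)$, so the rank is multiplicative under the Kronecker product; and $\omega(G_1 \boxtimes G_2) = \omega(G_1) \cdot \omega(G_2) \le d_1 d_2$ is standard. For $\eta_d$, the analogous construction on matrices is $A \eqdef A_1 \otimes A_2 + A_1 \otimes I + I \otimes A_2$, which is binary with zero diagonal and satisfies $A + I = (A_1 + I) \otimes (A_2 + I)$.

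The main obstacle is excluding a $J_{d_1 d_2 + 1}$ principal minor in the $\eta_d$ product of part (4). Given an all-ones principal block on an index set $S \subseteq [n_1] \times [n_2]$, the Kronecker factorisation of each entry as $(A_1+I)_{i,j} \cdot (A_2+I)_{a,b}$ forces $(A_1+I)$ to be $J$ on the first-coordinate projection $\pi_1(S)$ and $(A_2+I)$ to be $J$ on $\pi_2(S)$; the hypotheses then give $|S| \le |\pi_1(S)| \cdot |\pi_2(S)| \le d_1 d_2$, contradicting $|S| = d_1 d_2 + 1$. This coordinate-projection descent, together with the Hadamard-product identity in part (2), are the only non-routine steps; the remaining verifications are bookkeeping on entries of standard matrix products.
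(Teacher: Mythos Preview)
Your proposal is correct and follows essentially the same approach as the paper's own proof: part (2) via the Hadamard product $(A+I)\odot(A+I)^T$, part (3) via disjoint unions, and part (4) via the strong product (the paper writes $A_\Gamma=(A_{G_1}+I)\otimes(A_{G_2}+I)-I$, which is exactly $A_{G_1\boxtimes G_2}$) together with the coordinate-projection argument for the clique bound. You give more detail than the paper, particularly the explicit identity $B+I=(A+I)\odot(A^T+I)$ and the projection argument for the $\eta$-case in part (4), but the ideas are identical.
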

\begin{proof} 
    We prove each property in turn,
    \begin{enumerate}
        \item Obvious: If $b>a$, then a graph with no $a$-clique has no $b$-clique either. 
        \item The lower bound is obvious. For the upper bound, let $A$ be the matrix attaining $\eta_d(n)$ and consider the Hadamard product $(A+I) \odot (A+I)^T$.
        \item Let $G$ be a graph attaining $\nu_d(n)$,
        and let $H$ be the disjoint union of $k$ copies of $G$. Clearly $\omega(H) = \omega(G)$ and $\rank(A_H + I) = k \cdot \rank(A_G + I)$. The proof for $\eta_d$ is identical.
        \item Let $G_1$ and $G_2$ be graphs attaining the minimum for $\nu_{d_1}(n_1)$ and $\nu_{d_2}(n_2)$, respectively.
        Let $\Gamma$ be the graph whose adjacency matrix is $A_\Gamma = (A_{G_1} + I) \otimes (A_{G_2} + I) - I$.
        By multiplicativity,
        \[
            \rank( A_\Gamma + I) = \rank \left( (A_{G_1} + I) \otimes (A_{G_2} + I) \right) = \rank(A_{G_1} + I) \cdot \rank(A_{G_2} + I)
        \]
        
        But for any clique $S$ in $\Gamma$,
        we have $|S| \le \omega(G_1) \cdot \omega(G_2)$, since its projections on
        either coordinate are cliques in $G_1$ and $G_2$, respectively. The same proof applies to $\eta$.
        $\qedhere$ 
    \end{enumerate}
\end{proof}

\subsubsection{Rank-Ramsey Numbers}

Maintaining the analogy with Ramsey numbers, we define the \textit{Rank-Ramsey numbers},
\begin{definition}
    $R^k(s,t)$ is the smallest integer $N$ such that
    for every graph of order $N$, 
    \[ \omega(G) \ge s \text{ or } \rank(A_G + I) \ge t \]
\end{definition}

Clearly, $(s-1)(t-1) < R^k(s,t) \le R(s,t)$. 
The lower bound follows by taking the
disjoint union of $(t-1)$ copies of $K_{s-1}$. 
For small numbers $s$ and $t$, Rank-Ramsey numbers are
\textit{strictly smaller} than Ramsey numbers.
In fact they match the trivial bound $R^k(s,t) = (s-1)(t-1) + 1$
for all $2 \le t \le 5$ and any $1<s$. This inequality fails for substantially
larger values of $t$. For example, we show later on 
(\Cref{cor:nu_2_3_small_bounds}) 
that $R^k(3, 6l+11) > 16l$ for every $l > 2$.
For $l=6$ this yields $R^k(3, 47) > 96 > 93 = 2 \cdot 46 + 1$.
The following theorem shows, in contrast, that when the complement rank is low,
the trivial lower bound is \textit{always} attained.
Consequently, $R^k(3,n) > R^k(n,3)$, for every sufficiently large $n$,
and in particular, unlike Ramsey numbers, Rank-Ramsey numbers are not symmetric in their 
parameters. To wit, it is much harder to keep the rank low than to avoid large cliques.

\begin{theorem}
    \label{thm:bound_rank_ramsey}
    For $2 \le t \le 5$ and every 
    $s > 1$, there holds 
    $R^k(s, t) = (s-1)(t-1) + 1$.
\end{theorem}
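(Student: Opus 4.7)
The plan is to prove the lower bound by an explicit construction and the upper bound by a structural reduction to connected components, followed by a finite case analysis at small complement ranks.

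For the lower bound, I take $G$ to be $(t-1)$ vertex-disjoint copies of $K_{s-1}$: then $\omega(G) = s - 1$, and $A_G + I$ is block-diagonal with $(t-1)$ rank-one all-ones blocks, giving $\rank(A_G + I) = t - 1$, hence $R^k(s, t) > (s-1)(t-1)$.

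For the upper bound, the first observation is that $\rank(A_G + I)$ is additive over the connected components of $G$ while $\omega(G) = \max_i \omega(G_i)$. So it suffices to prove that for every connected graph $G$ with $\rank(A_G + I) = r \in \{1, 2, 3, 4\}$, one has $|V(G)| \le r \cdot \omega(G)$; granting this, a component-wise decomposition yields $|V(G)| \le (t-1)(s-1)$. The cases $r = 1, 2$ are elementary: $r = 1$ forces $A_G + I = J$, so $G = K_n$ and $|V(G)| = \omega(G)$; for $r = 2$, partitioning $V(G)$ into the (at most two) distinct $\{0,1\}$-row classes of $A_G + I$ forces $G$ to be a union of two cliques that are either disjoint (hence not connected) or completely joined (hence rank one), so no connected example exists.

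The cases $r \in \{3, 4\}$ are the heart of the proof. Here the key observation is that $\rank(A_G + I)$ is preserved under \emph{clique-blowup}: true twins (vertices with a common closed neighborhood) share rows of $A_G + I$, so $G$ is a clique-blowup $(G^\circ)^{[w]}$ of its true-twin reduction $G^\circ$, with $\rank(A_{G^\circ} + I) = r$. Since the rows of $A_{G^\circ} + I$ are distinct $\{0,1\}$-vectors in an $r$-dimensional row space, and any $r$-dimensional subspace of $\mathbb{R}^n$ contains at most $2^r$ distinct $\{0,1\}$-vectors (the restriction to any $r$ linearly independent columns being an injective linear map), we obtain $|V(G^\circ)| \le 2^r \le 16$ for $r \le 4$. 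This leaves a finite list of candidate graphs $G^\circ$, which may be enumerated directly or extracted from the Chang--Huang--Ye classifications \cite{chang2011characterization, chang2012characterization} upon noting that $A_{\overline{G^\circ}} = J - (A_{G^\circ} + I)$ gives $\rank(A_{\overline{G^\circ}}) \le r + 1$. For each such candidate, the required bound $|V(G)| \le r \cdot \omega(G)$, subject to $\omega(G) = \max\{\sum_{a \in C} w_a : C \text{ is a clique of } G^\circ\}$, reduces to verifying that the fractional independence number of $G^\circ$ is at most $r$---a finite but tedious check, and this verification for $r = 4$ is the main obstacle. That no analogous bound can hold for all $r$ is witnessed by the paper's later constructions, which give $R^k(3, t) > 2(t-1) + 1$ for sufficiently large $t$, and this failure is precisely what confines the present theorem to $t \le 5$.
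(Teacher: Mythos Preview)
Your overall plan is sound and the reduction to connected components of $G$ is valid, but there are two genuine gaps. First, your $r=2$ argument is incorrect as written: a $2$-dimensional row space can contain up to four $\{0,1\}$-vectors (hence up to three nonzero ones), not two, so ``at most two row classes'' does not follow from $\rank(A_G+I)=2$. The conclusion that no connected example exists happens to be true, but you need the correct bound together with an inspection of the few connected true-twin-free graphs on at most three vertices. Second, and more importantly, for $r\in\{3,4\}$ you defer the entire substance of the argument to an unexecuted ``finite but tedious check''; this is precisely the non-trivial content of the theorem. Your proposed shortcut via Chang--Huang--Ye also requires more care than you indicate: those classifications list \emph{connected} graphs of given rank, and while you assume $G^\circ$ is connected, $\overline{G^\circ}$ need not be, so a component-wise decomposition is needed before their lists can be invoked.

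The paper takes the complementary route: it decomposes $\overline{G}$ (rather than $G$) into connected components and uses ordinary (false-twin) blowup, which preserves $\rank(A_{\overline{G}})$. This leads, via \Cref{lem:characterisation_low_rank_a_plus_i}, to very short explicit lists---$\overline{G}$ is a blowup of $K_r$ for $r\in\{2,3\}$, and of one of five named graphs for $r=4$---after which \Cref{lem:bound_comp_disj_clique_blowup} handles all components that are clique-blowups, and a single LP computation (showing $\alpha^\star_H \ge 1/3$ for each of four base graphs) finishes $t=5$. Your route could in principle be completed, but with a larger search (up to $16$ true-twin-free candidates for $r=4$), and you have not carried it out.
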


It is not hard to show that a simple connected graph $G$ has rank $2$ iff it is
a complete bipartite graph, i.e., a \textit{blowup} of an edge. Likewise,
$\rank(A_G) = 3$ if and only if $G$
is a blowup of a triangle.
Chang, Huang and Ye \cite{chang2011characterization, chang2012characterization},
showed that every connected graph of 
$\rank$ $4$ or $5$, is the
blowup of a graph from an explicit
finite list of graphs, $\mathcal{G}_4$ and $\mathcal{G}_5$.
This is, in fact, true in general. 
Indeed, if a graph has a pair of twin vertices
and we remove one of the two, the rank remain unchanged (as does connectivity). 
Such reductions can be applied repeatedly.
Also, note that such a reduction is the {\em inverse of a blowup}. Therefore,

\begin{proposition}
For every positive integer $r$, there is a finite list 
of graphs $\mathcal{G}_r$ such that every
connected graph of rank $r$ is a blowup of a member of $\mathcal{G}_r$. 
\end{proposition}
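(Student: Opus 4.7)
The plan is to iteratively reduce $G$ by deleting, for each pair of twin vertices, one member of the pair; this is exactly the inverse of a blowup. The paragraph preceding the statement already notes that removing a twin preserves both rank and connectivity (any path through the deleted twin can be rerouted through its surviving partner, since they have identical neighbourhoods). Since each reduction strictly decreases the vertex count, after finitely many steps we reach a twin-free connected graph $G'$ of rank $r$, and by construction $G$ is a blowup of $G'$. It therefore suffices to show that for each fixed $r$ there are only finitely many connected twin-free graphs of rank $r$, and we may then take $\mathcal{G}_r$ to be this class.

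The heart of the argument is a uniform bound on the order of a twin-free graph of rank $r$. First observe that two vertices $i,j$ are twins if and only if rows $i$ and $j$ of $A_G$ are identical: if they are twins, then $A_{i,k}=A_{j,k}$ for every $k$ (for $k\notin\{i,j\}$ by equality of neighbourhoods, and for $k\in\{i,j\}$ because $i\not\sim j$), while conversely, row equality forces non-adjacency and identical neighbourhoods. Hence in a twin-free graph, all $n$ rows of $A_G$ are pairwise distinct $0/1$ vectors. Now select an $r\times r$ nonsingular submatrix of $A_G$ with column set $J\subseteq[n]$, $|J|=r$, and consider the restriction map $\pi$ sending each row of $A_G$ to its coordinates in $J$. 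The $r$ rows picked by the minor span the row space (being $r$ linearly independent vectors in an $r$-dimensional space), and their $\pi$-images are linearly independent, being the rows of the nonsingular minor. Hence $\pi$ is a linear isomorphism from the row space onto $\mathbb{R}^J$, and in particular is injective on rows. The $n$ distinct rows of $A_G$ therefore project to $n$ distinct $0/1$ vectors in $\{0,1\}^J$, yielding $n\le 2^r$.

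Consequently $\mathcal{G}_r$ contains (up to isomorphism) only graphs of order at most $2^r$, hence is finite, and every connected rank-$r$ graph is a blowup of a member of $\mathcal{G}_r$. The only delicate step is the injectivity of $\pi$ on the row space, which is precisely where the rank hypothesis is used; everything else is a routine appeal to the already-noted fact that blowup and reduction preserve rank and connectivity, together with termination of the reduction procedure.
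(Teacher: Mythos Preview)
Your proof is correct and follows the same overall strategy as the paper: reduce to a twin-free connected graph by repeatedly deleting twins (preserving rank and connectivity), then bound the order of a twin-free graph of rank $r$ to conclude finiteness of $\mathcal{G}_r$.

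The only difference is in how the order bound is obtained. The paper simply invokes the Kotlov--Lov{\'a}sz theorem \cite{kotlov1996rank}, which gives the sharper estimate $\mathcal{O}(2^{r/2})$ for the number of vertices of a twin-free graph of rank $r$. You instead give a self-contained elementary argument: distinct rows of $A_G$ remain distinct after projecting onto the $r$ columns of a maximal nonsingular minor, so a twin-free graph of rank $r$ has at most $2^r$ vertices. Your bound is weaker by a square, but for the proposition at hand only finiteness matters, so this is perfectly adequate and has the virtue of avoiding an external citation. The sharper Kotlov--Lov{\'a}sz bound does become relevant elsewhere in the paper (see the footnote to \Cref{claim:chromatic_bound}), which is presumably why the authors chose to quote it here.
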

\begin{proof}
As shown by Kotlov and Lov{\'a}sz \cite{kotlov1996rank},
a twin-free graph of rank $r$
has at most $\mathcal{O}(2^{r/2})$
vertices. The claim now follows with
\[
    \mathcal{G}_r = \left\{\ \Gamma : \rank(A_\Gamma) = r \text{ and } \Gamma \text{ is twin-free }\right\}. \qedhere
\]
\end{proof}

An adaptation of the results on $\mathcal{G}_4$ and $\mathcal{G}_5$ 
leads us to the following lemma.

\begin{lemma}
    \label{lem:characterisation_low_rank_a_plus_i}
    Let $G$ be a simple graph whose complement is connected. Then,
    \begin{enumerate}
        \item $\rank(A_G + I) = 4$ if and only if $\overline{G}$ is a blowup of a graph in $
            \left\{ \tikz \graph[n=4]{1,2,3,4;1--2,2--3,3--4};,\ 
            \tikz \graph[n=5]{1,2,3,4,5;1--2,2--3,3--3,4--1,2--5,3--5};,\ 
            \tikz \graph[n=6]{1,2,3,4,5,6;1--2,2--3,3--4,4--1,1--5,4--5,2--6,3--6,5--6};,\ 
            \tikz \graph[n=4]{1,2,3,4;1--2,1--3,1--4,2--3,2--4,3--4};,\ 
            \tikz \graph[n=4]{1,2,3,4;1--2,1--3,2--3,3--4};
             \right\}$.
        \item $\rank(A_G + I) = r$ for $r \in \{2,3\}$ if and only if $\overline{G}$ is a blowup of an $r$-clique.
    \end{enumerate}
\end{lemma}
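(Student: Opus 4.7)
The plan is to reduce both parts to a finite case analysis, using two ingredients: a blowup-invariance of $\rank(A_G+I)$, and the classifications of twin-free connected graphs of small rank (the lists $\mathcal{G}_r$ introduced just before the lemma). This is the route the paper itself hints at when it says the proof is ``an adaptation of the results on $\mathcal{G}_4$ and $\mathcal{G}_5$''.

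\textbf{Step 1 (Invariance under blowup of $\overline{G}$).} First I would establish the matrix identity
\[
    A_{\overline{H^w}} + I \;=\; \bigl(A_{\overline{H}} + I\bigr)^w
\]
for every graph $H$ and weight-vector $w$, by a direct block computation: both sides have diagonal blocks $J_{w(i)\times w(i)}$ (the anticliques of $H^w$ become cliques in $\overline{H^w}$) and off-diagonal block $(i,j)$ equal to $\bigl(1-(A_H)_{i,j}\bigr)\cdot J_{w(i)\times w(j)}$. Since matrix blowup preserves rank, iterating twin-removal on $\overline{G}$ yields a twin-free connected graph $H_0$ with $\overline{G}=H_0^w$ and $\rank(A_G+I)=\rank(A_{\overline{H_0}}+I)$. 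The problem therefore reduces to classifying all twin-free connected $H$ for which $\rank(A_{\overline{H}}+I)=r$ holds with $r\in\{2,3,4\}$.

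\textbf{Step 2 (Narrowing the candidates).} Because $A_{\overline{H}}+I = J-A_H$ differs from $-A_H$ by a rank-$1$ matrix, any candidate must satisfy $\rank(A_H)\in\{r-1,r,r+1\}$, so $H\in\mathcal{G}_{r-1}\cup\mathcal{G}_r\cup\mathcal{G}_{r+1}$. Here $\mathcal{G}_1=\emptyset$ (no symmetric $0$-diagonal matrix has rank $1$), while $\mathcal{G}_2=\{K_2\}$ and $\mathcal{G}_3=\{K_3\}$ by the paragraph preceding the lemma, which says that every connected graph of rank $2$ (resp.\ $3$) is a blowup of an edge (resp.\ a triangle). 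The larger lists $\mathcal{G}_4$ and $\mathcal{G}_5$ are the explicit Chang--Huang--Ye classifications.

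\textbf{Step 3 (Enumerate and verify).} For Part 2 the check is short: with $r=2$ only $H=K_2$ survives (for $K_3$ one computes $J-A_{K_3}=I_3$ of rank $3$, not $2$, and the members of $\mathcal{G}_4$ are ruled out by Part 1); with $r=3$ only $H=K_3$ survives. For Part 1, with $r=4$, I would iterate through the finite union $\mathcal{G}_3\cup\mathcal{G}_4\cup\mathcal{G}_5$ and compute $\rank(J-A_H)$ for every $H$ by elementary linear algebra; the five graphs displayed in the statement should be exactly those on which this rank equals $4$.

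The main obstacle is this last enumeration: the list $\mathcal{G}_5$ of twin-free connected rank-$5$ graphs is not short, and one must mechanically compute $\rank(J-A_H)$ for each of its members and rule out all but the intended ones. Each individual calculation is routine; the delicate aspect is exhaustiveness, which is where one must carefully import the concrete graph lists from \cite{chang2011characterization,chang2012characterization}.
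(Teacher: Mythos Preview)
Your proposal is correct and follows essentially the same route as the paper's own proof: reduce $\overline{G}$ to its twin-free core $H$, use the identity $\rank(A_G+I)=\rank(J-A_H)$ together with $|\rank(J-A_H)-\rank(A_H)|\le 1$ to place $H$ in $\mathcal{G}_{r-1}\cup\mathcal{G}_r\cup\mathcal{G}_{r+1}$, and then filter that finite list by computing $\rank(J-A_H)$ for each member. One minor slip: in your $r=2$ discussion the candidate set is already $\mathcal{G}_1\cup\mathcal{G}_2\cup\mathcal{G}_3=\{K_2,K_3\}$, so there is no need to invoke $\mathcal{G}_4$.
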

\begin{proof}
    Let $G$ be an $n$-vertex graph with $\rank(A_G + I) = r$ where $r \in \{2, 3, 4\}$, and $\overline{G}$ connected. Then,
    \[
        l \eqdef \rank(A_{\overline{G}}) = \rank(J - (A_G + I) ) \in \left\{r-1, r, r+1\right\}
    \]
    So, by \cite{chang2011characterization, chang2012characterization}, 
    $\overline{G} = H^w$ is a blowup of a 
    graph $H \in \mathcal{G}_l$ with some weights $w$.
    But then,
    \begin{align*}
        r = \rank(A_G + I_n) &= \rank(J_n - J_n + A_G + I_n) \\
        &= \rank(J_n - A_{H^w}) \\
        &= \rank(J_n - A_H^w) \\
        &= \rank( (J_{v(H)}-A_H)^w ) = \rank( J_{v(H)} - A_H )
    \end{align*}
    therefore $H\in\mathcal{G}_{r-1} \sqcup \mathcal{G}_r \sqcup \mathcal{G}_{r+1}$
    and is such that
    $\rank(J - A_H) = r $. This
    yields the above lists.
\end{proof}
We also require the following lemma.
\begin{lemma}
    \label{lem:bound_comp_disj_clique_blowup}
    Let $G$ be an $n$-vertex graph whose complement is a disjoint union of blowups of cliques and isolated vertices.
    Then, $\rank(A_G + I) \cdot \omega(G) \ge n$.
\end{lemma}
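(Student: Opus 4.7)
The plan is to exploit the join structure of $G$ that is dual to the disjoint-union structure of $\overline{G}$. First I would write $V_1, \dots, V_k$ for the vertex sets of the connected components of $\overline{G}$ and set $n_i \eqdef |V_i|$, so that $n = \sum_i n_i$. By hypothesis each component is either a blowup of a clique $K_{r_i}$ (a complete multipartite graph with $r_i$ parts) or an isolated vertex; in either case I would list its part-sizes as $a_{i,1}, \dots, a_{i,r_i}$ summing to $n_i$. Taking complements, $G[V_i]$ is then a disjoint union of $r_i$ cliques of sizes $a_{i,1}, \dots, a_{i, r_i}$, and since $\overline{G}$ carries no edges between distinct $V_i$, the graph $G$ itself is the join of its subgraphs $G[V_1], \dots, G[V_k]$.

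From this structure I would extract two facts. First, because cliques in a join decompose into one clique per factor, the clique number is additive: $\omega(G) = \sum_i \alpha_i$, where $\alpha_i \eqdef \max_j a_{i,j}$; consequently $n_i = \sum_j a_{i,j} \le r_i \alpha_i$ for every $i$. Second, the principal submatrix of $A_G + I$ indexed by $V_i$ is precisely $A_{G[V_i]} + I_{n_i}$, which is block-diagonal with $r_i$ all-ones blocks $J_{a_{i,j}}$, each of rank $1$. Hence this submatrix has rank exactly $r_i$, and since the rank of a submatrix never exceeds that of the full matrix (a special case of \Cref{prop:rank_of_gen_minor}), $\rank(A_G + I) \ge r_i$ for each $i$. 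Taking the maximum gives $\rank(A_G + I) \ge \max_i r_i$.

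Combining the two observations then yields the claim in a single chain:
\[
    \rank(A_G + I) \cdot \omega(G) \;\ge\; \Bigl(\max_i r_i\Bigr) \sum_i \alpha_i \;\ge\; \sum_i r_i \alpha_i \;\ge\; \sum_i n_i \;=\; n.
\]
I do not foresee a real obstacle: the only point that deserves care is to check that isolated components of $\overline{G}$ are absorbed into the framework as the degenerate case $r_i = 1$, $a_{i,1} = 1$, and to recognise that the disjoint-clique structure of each $G[V_i]$ really does force the corresponding diagonal block of $A_G + I$ to be block-diagonal with all-ones blocks. Neither is subtle, which is perhaps unsurprising given that the stronger (and also true) inequality $\rank(A_G + I) \ge \sum_i r_i$ is not even needed.
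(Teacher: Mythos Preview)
Your proposal is correct and follows essentially the same route as the paper: both identify $\omega(G) = \sum_i \max_j a_{i,j}$, both bound $\rank(A_G+I) \ge \max_i r_i$ via a principal submatrix, and both finish with the same chain $(\max_i r_i)\sum_i \alpha_i \ge \sum_i r_i \alpha_i \ge \sum_i n_i = n$. The only cosmetic difference is that the paper obtains the rank lower bound by passing through $J_n - A_{\overline{G}}$ and blowup invariance to reach an identity minor $I_{l^\star}$, whereas you read off the rank of the diagonal block $A_{G[V_i]}+I$ directly as a block-diagonal of all-ones matrices; both are immediate.
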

\begin{proof}
    Let $k$ be the number of connected components in $\overline{G}$, say
    \[
        \overline{G} = \overline{G}_1 \sqcup \dots \sqcup \overline{G}_k, \text{ where } \overline{G}_i = K_{l_i}^{w_i} \text{ for every $1 \le i \le k$ }
    \]
    (where here, with slight abuse of notation, isolated vertices are denoted by $K_1$). The clique number is determined by the blowup weights,
    \[
        \omega(G) = \alpha(\overline{G}) = \sum_{i=1}^k \alpha(\overline{G}_i) = \sum_{i=1}^k \norm{w_i}_\infty,
    \]
    and conversely, denoting $l^\star \eqdef \max \left\{ l_i : 1 \le i \le k \right\}$ and $L \eqdef l_1 + \dots + l_k$,
    and letting $w$ be the concatenation of the weights for each of the components,
    we may bound the rank as follows
    \begin{align*}
        \rank(A_G + I_n) = \rank(J_n - A_{\overline{G}})
        &= \rank(J_L^w - A_{K_{l_1}^{w_1} \sqcup \dots \sqcup K_{l_k}^{w_k}}) \\
        &= \rank(J_L - A_{K_{l_1} \sqcup \dots \sqcup K_{l_k}}) \\
        &\ge \rank(J_{l^\star} - A_{K_{l^\star}}) = \rank(I_{l^\star}) = l^\star
    \end{align*}
    where the last inequality follows by taking a principal minor. The claim now follows, as 
    \[
        n = \sum_{i=1}^k \langle w_i, \mathbbm{1} \rangle \le \sum_{i=1}^k l_i \norm{w_i}_\infty \le l^\star \sum_{i=1}^k \norm{w_i}_\infty
        \le \rank(A_G + I) \cdot \omega(G) \qedhere
    \]
\end{proof}
The proof of \Cref{thm:bound_rank_ramsey} now follows.
\begin{proof}
    Let $G$ be an $n$-vertex graph with $n > 1$ and $\rank(A_G + I) = t-1$, and write
    \[
        \overline{G} = \overline{G}_1 \sqcup \dots \sqcup \overline{G}_k, \text{ and } r_i \eqdef \rank(A_{\overline{G}_i}) \text{ for every $1 \le i \le k$ }
    \]
    where $k \ge 1$ is the number of components in $\overline{G}$.
    Clearly, $r_1 + \dots + r_k = \rank(A_{\overline{G}}) = \rank(J - (A_G + I)) \le t$.
    The cases $t = \{2,3,4\}$ are thus handled, since if $k=1$ then
    $\overline{G}$ is a blowup of a clique (recall \Cref{lem:characterisation_low_rank_a_plus_i}),
    and if $k > 1$ each \textit{component} is an isolated vertex or blowup of a clique,
    which is covered by \Cref{lem:bound_comp_disj_clique_blowup}.
    
    It remains to handle $t=5$, in the cases where not all 
    the non-trivial components of $\overline{G}$ are blowups of cliques.
    Let $\mathcal{H}$ be the list of graphs in 
    \Cref{lem:characterisation_low_rank_a_plus_i}, excluding $K_4$. 
    Only one possibility still remains to be dealt with: 
    $\overline{G}$ is a blowup $H^w$ of some graph $H \in \mathcal{H}$,
    together with (possibly) some isolated vertices. We can rule out the existence of isolated vertices, since then:
    \[
        \rank(A_G + I) = \rank(J - A_{\overline{G}}) = \rank \left( \begin{array}{c|c}
            J - A_{H^w} & J \\
            \hline
            J & J
        \end{array} \right)
        = \rank \left( \begin{array}{c|c}
            J - A_{H} & \mathbbm{1}^T \\
            \hline
            \mathbbm{1} & 1
        \end{array}  \right)
    \]
    and for all $H \in \mathcal{H}$, the latter rank is $5 > \rank(A_G+I) = t-1 = 4$, a contradiction.

    Finally we deal with the case $\overline{G} = H^w$.
    Maximal independent sets in $H^w$ correspond to blowups of anticliques in $H$.
    Therefore, the following LP bounds the independence number of blowups of $H$:
    \begin{equation*}
        \begin{array}{ll@{}ll}
        \text{minimize}  & a &\\
        \text{subject to}& x \ge 0& \\
                         & \langle \mathbbm{1}, x \rangle = 1 & \\
                         & \langle \mathbbm{1}_S,  x \rangle \le a &\ \ \  \forall S \in I(H)
        \end{array}
    \end{equation*}
    where $I(H)$ is the set of all independent sets of $H$. Let $\alpha^{\star}_H$ be the optimum of the above program.
    The least optima of this LP over the base-graphs in $\mathcal{H}$ is exactly $1/3$, by direct computation. Therefore,
    \[
        \frac{\omega(G)}{n} = \frac{\alpha(H^w)}{\langle \mathbbm{1}, w \rangle} \ge \alpha^{\star}_H \ge \frac{1}{3} = \frac{1}{\rank(A_G + I) - 1} 
    \]
    and the proof follows by re-arranging.
\end{proof}

\subsubsection{Warmup: Simple Constructions of Rank-Ramsey Graphs}

As \Cref{prop:nu_vs_R} shows,
\textit{any} explicit (constructive) sublinear bound on 
$\nu_d(n)$ yields an explicit Ramsey graph. Moreover,
the bound on the independence number is replaced by the possibly
stricter requirement concerning the rank.
How difficult is it to accomplish this?
The directed quantity $\eta_1$ is easy to bound.
Shigeta and Amano \cite{shigeta2015ordered} obtained an asymptotically tight estimate, $\eta_1(n) = \Theta(n^{1/2 + o_n(1)})$.
As a brief warmup, we give the following weaker claim.

\begin{claim}
    \label{claim:eta_1_bounds}
    For every positive integer $n$, there holds 
    $n^{1/2} \le \eta_1(n) \le \mathcal{O} \left( n^{\log_4 3} \right)$.
\end{claim}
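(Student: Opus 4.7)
The plan is to handle the two bounds separately: the lower bound by a Hadamard-product identity, and the upper bound by amplifying a small seed matrix using the tensor-product sub-multiplicativity of \Cref{lem:props_nu_eta}.

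For the lower bound, I would apply the Hadamard trick already foreshadowed in property (2) of \Cref{lem:props_nu_eta}. Given a matrix $A$ attaining $\eta_1(n)$, form $M \eqdef (A+I)\odot(A+I)^T$. The diagonal of $M$ is $1$, and for $i\neq j$ the no-$J_2$-principal-minor condition precisely forbids $A_{i,j}=A_{j,i}=1$, so $M_{i,j}=0$. Thus $M=I_n$, and since rank is sub-multiplicative under the Hadamard product,
\[
    n \;=\; \rank(I_n) \;\le\; \rank(A+I)\cdot\rank\!\bigl((A+I)^T\bigr) \;=\; \rank(A+I)^2,
\]
giving $\rank(A+I)\ge n^{1/2}$.

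For the upper bound, I would exhibit a small seed matrix that beats the trivial bound and then amplify it via property (4) of \Cref{lem:props_nu_eta} specialised to $d_1=d_2=1$, which reads $\eta_1(n_1 n_2)\le \eta_1(n_1)\,\eta_1(n_2)$. The seed I have in mind is the directed $4$-cycle on $\{1,2,3,4\}$, i.e.\ $A_{i,j}=\mathbbm{1}\{j\equiv i+1\pmod 4\}$. It has no antiparallel pair of edges, so $A+I$ has no $J_2$ principal minor, and a direct check shows $\rank(A+I)=3$: rows $1,3$ sum to the all-ones vector, and so do rows $2,4$, which is the unique linear dependence. This yields $\eta_1(4)\le 3$, and iterating sub-multiplicativity gives $\eta_1(4^k)\le 3^k$ for every $k$. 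For an arbitrary $n$ I would take $k=\lceil\log_4 n\rceil$ and restrict the size-$4^k$ construction to any $n\times n$ principal submatrix; this inherits the no-$J_2$-principal-minor condition and can only decrease the rank, yielding $\eta_1(n)\le 3^{\lceil\log_4 n\rceil}=\mathcal{O}\!\bigl(n^{\log_4 3}\bigr)$.

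The only step that requires any creative thought is locating the seed matrix. The directed triangle ($n=3$) also satisfies the no-$J_2$ condition but has $\rank(A+I)=3$, so it does not beat the trivial bound; one must climb to four vertices before a genuine saving appears. Once the $4$-vertex example is found, the rest is mechanical: the lemma does the entire amplification, and the passage from powers of $4$ to general $n$ is routine.
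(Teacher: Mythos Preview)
Your proof is correct and essentially identical to the paper's: both use the Hadamard identity $(A+I)\odot(A+I)^T=I$ for the lower bound, and the directed $4$-cycle seed amplified via property (4) of \Cref{lem:props_nu_eta} for the upper bound. Your added remarks about the rank computation and the passage to general $n$ via principal submatrices simply fill in routine details the paper leaves implicit.
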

\begin{proof}
    For the lower bound, if $A$ is the adjacency matrix of a simple directed graph with no $2$-cycle,
    then $(A+I) \odot (A+I)^T = I$, and by sub-multiplicativity indeed $\rank(A+I) \ge n^{1/2}$.
    For the upper bound, note the following properties
    of $A_{D_4}$ the adjacency matrix of the directed $4$-cycle,
    \begin{itemize}
    \item
    $\rank(A_{D_4} + I) = 3$.
    \item
    $A_{D_4} + I$ has no $J_2$ principal minor.
    \end{itemize}
    Consequently $\eta_1(4) \le 3$, and
    the proof follows by repeatedly applying property $4$ of \Cref{lem:props_nu_eta}.
\end{proof}

The upper bound can be likewise improved, e.g., to 
$\eta_1(n) \le \Theta(n^{\log_6 4})$, as an exhaustive search through
digraphs of order $6$ yields $\eta_1(6) = 4$.
Turning our attention to the \textit{undirected} quantity $\nu_d(n)$,
we note that $\nu_1(n)=n$ (the graph must have no edges).
Instead, we provide, as part of the warmup, a trivial construction for $\nu_d(n)$.
Better estimates provided in the sequel require new ideas.

\begin{proposition}
    \label{prop:trivial_bound_nu}
    For every two positive integers $d$ and $n$, we have $\nu_d(n) \le \lceil \frac{n}{d} \rceil$.
\end{proposition}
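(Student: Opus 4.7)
The plan is to exhibit an explicit graph witnessing the claimed upper bound, following the construction already sketched in the discussion preceding the definition of $\nu_d$. Write $n = qd + r$ with $0 \le r < d$, where $q = \lfloor n/d \rfloor$, and let $G$ be the disjoint union of $q$ copies of $K_d$ together with one additional clique $K_r$ on the remaining vertices (omit this last clique if $r=0$). This graph has exactly $n$ vertices and clique number $\max(d, r) \le d$, so it is a legal competitor in the definition of $\nu_d(n)$.

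It remains to compute $\rank(A_G + I)$. The key observation is that for any clique $K_m$, one has $A_{K_m} + I_m = J_m$, the all-ones matrix, which has rank exactly $1$. Since $G$ is a disjoint union of cliques, $A_G + I$ is a block-diagonal matrix whose blocks are precisely the all-ones matrices of the corresponding clique sizes. The rank of a block-diagonal matrix is the sum of ranks of its blocks, so $\rank(A_G + I)$ equals the number of blocks, which is $q + \mathbbm{1}\{r > 0\} = \lceil n/d \rceil$.

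There is no real obstacle here beyond bookkeeping the two cases $r = 0$ and $r > 0$, and the argument is essentially self-contained. I would not expect to invoke any of the earlier machinery (blowups, lifts, or Kronecker products), though it is worth noting in passing that the construction can also be described as a blowup: it is the blowup of the empty graph on $\lceil n/d \rceil$ vertices by weights summing to $n$ with each weight at most $d$, with complement a blowup of the corresponding clique; this viewpoint will be consistent with, and in fact an extreme case of, the characterisation used in \Cref{lem:bound_comp_disj_clique_blowup}.
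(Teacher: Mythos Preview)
Your proof is correct and follows exactly the same construction as the paper: the disjoint union of $\lfloor n/d\rfloor$ copies of $K_d$ together with a leftover clique $K_{n\bmod d}$, after which $A_G+I$ is block-diagonal with all-ones blocks and hence has rank $\lceil n/d\rceil$. (One tiny quibble: when $q=0$ the clique number is $r$, not $\max(d,r)=d$, but the needed inequality $\omega(G)\le d$ holds either way.)
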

\begin{proof}
    Let $G$ be the disjoint union of $\lfloor \frac{n}{d} \rfloor$ copies of $K_{d}$ and a single copy of $K_{n \bmod d}$.
    By construction, $\omega(G) = d$.
    Furthermore, $(A_G + I)$ is a block matrix of $\lceil \frac{n}{d} \rceil$ matrices J, thus $\rank(A_G + I) = \lceil \frac{n}{d} \rceil$. 
\end{proof}

\section{Rank-Ramsey Graphs and the Log-Rank Conjecture}
\label{sect:rank_ramsey_and_log_rank}

In this section we briefly overview the connections between the
log-rank problem in communication complexity, and 
Rank-Ramsey graphs. This discussion is mostly expository and not very technical.

On the one hand, we observe that any Rank-Ramsey construction \textit{witnesses} a gap in the graph-theoretic formulation
of the log-rank conjecture.
Conversely we show that, \textit{under certain conditions}, exhibiting a gap in the log-rank problem implies the construction of Rank-Ramsey graphs. 

\subsection{The Log-Rank Conjecture}

What is the relation between the \textit{rank} of a binary matrix and its \textit{communication complexity}?
As observed by Mehlhorn and Schmidt \cite{mehlhorn1982vegas},
every deterministic communication protocol induces a partition 
of the communication matrix into monochromatic rectangles. This
clearly implies that $\DCC(A) \ge \log \rank(A)$ for every binary matrix $A$,
where $\DCC(A)$ is the deterministic communication complexity of $A$.
Does a converse hold? This is one of the most notorious open questions 
in computational complexity.
The \textit{log-rank conjecture} states that 
\begin{equation}
\label{eq:log_rank}    
    \DCC(A) \le \mathcal{O} \left( \log^c \rank(A) \right) 
\end{equation}
for \textit{every} binary matrix $A$, where $c > 1$ is some absolute constant.

Our current best lower bound on $c$ is $c \ge 2$, due to G\"o\"os, Pitassi and Watson \cite{goos2015deterministic}. 
The upper bounds that we have remain exponentially far.
The current record, due to Sudakov and Tomon \cite{sudakov2023matrix} (see also \cite{lovett2016communication}),
states that $\DCC(A) = \mathcal{O}(\rank^{1/2}(A))$.

\subsection{The Graph-Theoretic Perspective}
\label{subsect:graph_theoretic_log_rank}

There are several known equivalent formulations of the log-rank conjecture,
eschewing the use of terminology from communication complexity.
For example, Nisan and Wigderson \cite{nisan1995rank} proved that
the conjecture is equivalent to the following statement:
Define $\text{mono}(A):= \max |B|/|A|$,
over all monochromatic minors $B$ of $A$.\footnote{Here we denote by
$|Z|$ the ``volume'' of the matrix $Z$, i.e., the product of $Z$'s dimensions.}
The conjecture is that for every binary matrix $A$ there holds 
\[
    -\log \left( \text{mono}(A) \right) \le \mathcal{O}( \log^{c} \rank(A) ),
\]
where $c > 1$ is a universal constant.

Here, we consider the \textit{graph-theoretic formulation}, due to Lov{\'a}sz and Saks
(see \cite{lovasz1988lattices, nisan1995rank}), which states that there exists an
absoluate constant $c>0$ such that for \textit{every} graph $G$,\footnote{
The constants $c$ appearing in these three formulations may differ. The constant of
the graph-theoretic formulation is known to be equivalent to that in a formulation
comparing rank and $\chi^1$ (i.e., the $1$-\textit{cover number}),
which, by known results \cite{aho1983notions}, is at most a factor of $2$ away from 
the constant for deterministic communication complexity.
We refer the reader to \cite{lee2023around} for details of the graph-theoretic reduction.
} 
\[
    \log \chi(G) \le \mathcal{O} \left( \log^{c}  \rank(A_{G}) \right) 
\]

To motivate the next definition, let us
think of the clique number $\omega(G)$ as the largest cardinality of a set
of vertices with independence number $1$. We
consider, likewise, the largest order of an induced subgraph
of $G$, of independence number at most $d$: 

\begin{definition}
    Let $G$ be a graph, and $d$ be a positive integer. Then,
    \[
        \psi_d(G) \eqdef \max_{\substack{U \subseteq V(G) \\ \alpha(G[U]) \le d}} |U|
    \]
\end{definition}

An interesting feature of $\psi_d(G)$ is that it 
captures the chromatic number, up to log factors. This goes back to the
classical results on integrality gaps for covering problems \cite{lovasz1975ratio}.
A similar argument appears in the analysis \cite{dietzfelbinger1996comparison},
of \textit{extended fooling sets}.\footnote{
It clearly suffices to consider the log-rank conjecture for twin-free graphs,
as \textit{reductions} affect neither chromatic number, nor rank.
The main result of \cite{kotlov1996rank} then implies one can replace the
$(\ln n)$-factor in \Cref{claim:chromatic_bound}, with $(\ln \sqrt{2} + o(1)) \rank(A_G)$.}

\begin{claim}
    \label{claim:chromatic_bound}
    For every graph $G$ of order $n$, it holds that:
    \[
        \max_{1 \le d \le \alpha(G)} \frac{\psi_d(G)}{d} \le \chi(G) \le \ln n \cdot \max_{1 \le d \le \alpha(G)} \frac{\psi_d(G)}{d} + 1
    \]
\end{claim}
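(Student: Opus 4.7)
The plan is to prove the two inequalities by independent arguments. The lower bound is immediate from the monotonicity of the chromatic number under induced subgraphs combined with the elementary bound $\chi(H) \ge v(H)/\alpha(H)$. The upper bound will follow from a standard greedy covering argument, for which the quantity $\rho \eqdef \max_{1 \le d \le \alpha(G)} \psi_d(G)/d$ plays the role of a fractional chromatic-type parameter.

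For the lower bound, I would fix $d$ and let $U \subseteq V(G)$ achieve $\psi_d(G)$, so that $|U| = \psi_d(G)$ and $\alpha(G[U]) \le d$. Any proper coloring of $G$ restricted to $U$ is a proper coloring of $G[U]$, and each color class, being an independent set in $G[U]$, has size at most $\alpha(G[U]) \le d$. Hence
\[
    \chi(G) \ge \chi(G[U]) \ge \frac{|U|}{\alpha(G[U])} \ge \frac{\psi_d(G)}{d}.
\]
Taking the maximum over $d$ yields the desired lower bound.

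For the upper bound, the main observation is the following uniform bound on independence numbers of induced subgraphs: if $H$ is any induced subgraph of $G$ on $m \ge 1$ vertices and $d^\star \eqdef \alpha(H)$, then $H$ itself witnesses $m \le \psi_{d^\star}(G)$, hence
\[
    \alpha(H) = d^\star \ge \frac{m}{\psi_{d^\star}(G)/d^\star} \cdot \frac{\psi_{d^\star}(G)/d^\star}{\rho} \cdot \rho \ge \frac{m}{\rho}.
\]
(More cleanly: $m/d^\star \le \psi_{d^\star}(G)/d^\star \le \rho$, so $\alpha(H) \ge m/\rho$.) Armed with this, I would run the greedy coloring: repeatedly extract a maximum independent set from the remaining induced subgraph, coloring it with a new color and deleting it. After $k$ rounds, the number of remaining vertices is at most $n(1-1/\rho)^k$. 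Using $-\ln(1 - 1/\rho) \ge 1/\rho$, one sees that after $\lceil \rho \ln n \rceil$ rounds at most one vertex remains, which can be colored with one additional color, giving
\[
    \chi(G) \le \rho \ln n + 1 = \ln n \cdot \max_{1 \le d \le \alpha(G)} \frac{\psi_d(G)}{d} + 1.
\]

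There is no serious obstacle here; the only mild subtlety is making sure that the maximum over $d \in \{1,\ldots,\alpha(G)\}$ (rather than over all $d$) is the right quantity to invoke in the greedy step. This is automatic because any induced subgraph $H$ of $G$ satisfies $\alpha(H) \le \alpha(G)$, so the value $d^\star = \alpha(H)$ arising in the greedy analysis always lies in the permitted range.
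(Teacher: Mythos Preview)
Your argument is correct and mirrors the paper's proof: the same $\chi(H)\ge v(H)/\alpha(H)$ bound for the lower inequality, and the same greedy extraction of maximum independent sets for the upper, using that every induced subgraph $H$ satisfies $\alpha(H)\ge v(H)/\rho$ via $v(H)\le\psi_{\alpha(H)}(G)$. One small arithmetic slip: ``$\lceil \rho\ln n\rceil$ rounds plus one more color'' only yields $\chi(G)\le\lceil\rho\ln n\rceil+1$; to get exactly $\rho\ln n+1$, argue as the paper does that if the greedy terminates after $t$ rounds then $1\le |U_t|\le n(1-1/\rho)^{t-1}<ne^{-(t-1)/\rho}$, whence $t<\rho\ln n+1$.
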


\begin{proof}
    Let $H$ be an induced subgraph of $G$ attaining $\psi_d(G)$ for some
    $1 \le d \le \alpha(G)$. Then,
    \[
        \chi(G)\ge \chi(H)\ge\frac{\psi_d(G)}{\alpha(H)}\ge\frac{\psi_d(G)}{d}.
    \]
    The middle inequality follows from the fact that $\alpha(F)\cdot\chi(F)\ge v(F)$ for every graph $F$. The following colouring algorithm yields the upper bound.
\begin{itemize}
\item 
Let $U_1 := V(G)$, and
repeat until $U_{i+1}=\emptyset$:
\item
Let $U_{i+1}:=U_i\setminus S_i$,
where $S_i$ is a largest independent set in the induced subgraph $G[U_i]$.
\item 
Colour the vertices of $S_i$ with colour $i$.
\end{itemize}

If this algorithm terminates after $t$ iterations, then
it yields a $t$-colouring of $G$, whence $t\ge\chi(G)$.
Note also that $|U_i|\le\psi_{|S_i|}(G)$.
Denote $B \eqdef \max_{1 \le i \le t} |U_i| / |S_i|$,
i.e., $|S_i| \ge \frac{1}{B} |U_i|$, for $i=1, \ldots, t$.
    Thus, for all $i$, we have
    \[
        |U_{i+1}| = |U_{i}| - |S_i| \le |U_{i}| \cdot \left( 1 - \frac{1}{B} \right).
    \]
    Iterating, and using this for $i=t-1$, we get
    \[
        1 \le |U_t|\le n \cdot \left( 1 - \frac{1}{B} \right)^{t - 1} < n \cdot e^{-\frac{t-1}{B}}
    \]
    Taking logs, we have,
    \[
        \chi(G) \le t< B\cdot \ln n + 1 \le \max_{1 \le i \le t} \left( \frac{\psi_{|S_i|}(G)}{|S_i|} \right)\cdot \ln n + 1 \qedhere
    \]
\end{proof}

The following two corollaries connect between the growth rates of the functions $\nu_d(n)$, and the log-rank conjecture.
The first corollary shows that Rank-Ramsey graphs witness a 
gap in the graph-theoretic formulation
of the log-rank conjecture. Conversely, the second corollary shows how under certain circumstances, log-rank separations can imply constructions
of Rank-Ramsey graphs.

\begin{corollary}
    \label{cor:kramsey_implies_log_rank_separation}
    For any two positive integers $d$ and $n$, there exists a graph $G$ such that:
    \[
        \chi(G) \ge \frac{n}{d}, \text{ and } \rank(A_{G}) \le \nu_d(n) + 1
    \]
\end{corollary}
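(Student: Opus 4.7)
The plan is to simply take $G$ to be the complement of a graph $H$ that achieves the minimum in the definition of $\nu_d(n)$. By definition, such an $H$ has $n$ vertices, satisfies $\omega(H) \le d$, and has $\rank(A_H + I) = \nu_d(n)$. Setting $G \eqdef \overline{H}$, I claim both requested bounds follow essentially immediately.

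For the chromatic number bound, I would use the standard inequality $\chi(G) \ge v(G)/\alpha(G)$, valid for every graph (any colour class is an independent set, so at least $n/\alpha(G)$ colours are needed). Since $\alpha(G) = \alpha(\overline{H}) = \omega(H) \le d$, this gives
\[
    \chi(G) \;\ge\; \frac{n}{\alpha(G)} \;\ge\; \frac{n}{d},
\]
as desired.

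For the rank bound, recall that $A_{\overline{H}} = J - (A_H + I)$, so
\[
    \rank(A_G) \;=\; \rank\bigl(J - (A_H + I)\bigr) \;\le\; \rank(J) + \rank(A_H + I) \;=\; 1 + \nu_d(n),
\]
by subadditivity of rank and the fact that $\rank(J) = 1$. This is exactly the stated bound.

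There is no real obstacle here; the corollary is essentially a one-line consequence of the definition of $\nu_d$ together with the two elementary observations $\chi \ge n/\alpha$ and $|\rank(\overline{H}) - \rank(A_H + I)| \le 1$ (which was already noted in the preliminaries). The content of the corollary is conceptual rather than technical: it simply records that the complement of any graph witnessing a good upper bound on $\nu_d(n)$ automatically witnesses a gap in the Lov\'asz--Saks formulation of the log-rank conjecture, since $n/d$ can be polynomially larger than $\nu_d(n) + 1$ whenever $\nu_d(n)$ is sublinear in $n$ for constant $d$.
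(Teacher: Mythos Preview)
Your proof is correct and matches the paper's own argument essentially line for line: take the complement of a graph attaining $\nu_d(n)$, use $\chi(G)\alpha(G)\ge n$ with $\alpha(G)=\omega(\overline{G})\le d$, and bound $\rank(A_G)=\rank(J-(A_{\overline{G}}+I))\le \nu_d(n)+1$ by rank subadditivity.
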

\begin{proof}
    Let $\overline{G}$ be an order-$n$ graph attaining $\nu_d(n)$. Then, $\chi(G) \alpha(G) \ge n$ and $\alpha(G)=\omega(\overline{G}) \le d$.
    And,
    \[
        \rank(A_{G}) = \rank(J - A_{\overline{G}} - I) \le \rank(A_{\overline{G}} + I) + 1 = \nu_d(n) + 1 \qedhere
    \]
\end{proof}

\begin{corollary}
    \label{cor:kramsey_versus_chromatic_number}
    For every graph $G$, there exists some $1 \le d \le \alpha(G)$ such that,
    \[
        \chi(G) \le \ln n \cdot \frac{\psi_d(G)}{d} + 1, \text{ and } \rank(A_G) \ge \nu_d(\psi_d(G)) - 1
    \]
\end{corollary}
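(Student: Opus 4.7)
The plan is to choose a single value of $d$ that simultaneously realises both bounds, and to show that the two inequalities then follow by independent arguments.

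First, I would take $d^{\star} \in \arg\max_{1 \le d \le \alpha(G)} \psi_d(G)/d$. With this choice the first inequality, $\chi(G) \le \ln n \cdot \psi_{d^\star}(G)/d^\star + 1$, is immediate from \Cref{claim:chromatic_bound} applied to $G$.

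For the second inequality, let $U \subseteq V(G)$ be a subset attaining $\psi_{d^\star}(G) = |U|$ with $\alpha(G[U]) \le d^\star$. The key move is to pass to the complement inside $U$: set $H \eqdef \overline{G[U]}$, which is a graph on $\psi_{d^\star}(G)$ vertices with $\omega(H) = \alpha(G[U]) \le d^\star$. Then, by the defining minimality of $\nu_{d^\star}$,
\[
    \rank(A_H + I) \ge \nu_{d^\star}\!\left(\psi_{d^\star}(G)\right).
\]

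Finally, I translate this back to $A_G$ using two elementary facts. Since $A_H + I = J - A_{G[U]}$ (up to rearranging entries, using that $\overline{G[U]}$ has adjacency $J - I - A_{G[U]}$), we have $\rank(J - A_{G[U]}) \ge \nu_{d^\star}(\psi_{d^\star}(G))$, and by subadditivity of rank applied to $J$ (which has rank $1$) this gives $\rank(A_{G[U]}) \ge \nu_{d^\star}(\psi_{d^\star}(G)) - 1$. But $A_{G[U]}$ is a principal minor of $A_G$, so $\rank(A_G) \ge \rank(A_{G[U]})$, and combining yields $\rank(A_G) \ge \nu_{d^\star}(\psi_{d^\star}(G)) - 1$, as required. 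There is no real obstacle here: the argument is essentially bookkeeping, the only subtlety being the consistent use of ``$-1$'' slack when moving between $\rank(A_G + I)$ and $\rank(\overline{G})$, which is precisely the $\pm 1$ discrepancy already emphasised in the preliminaries.
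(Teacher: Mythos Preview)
Your proposal is correct and follows essentially the same route as the paper: choose $d$ maximizing $\psi_d(G)/d$ to get the chromatic bound from \Cref{claim:chromatic_bound}, then take an induced subgraph $U$ attaining $\psi_d(G)$, apply the definition of $\nu_d$ to its complement, and absorb the $\pm 1$ from the all-ones matrix $J$ when passing back to $\rank(A_{G[U]}) \le \rank(A_G)$. The only cosmetic difference is that the paper names $H = G[U]$ and works with $\overline{H}$, whereas you name $H = \overline{G[U]}$ directly; the chain of inequalities is identical.
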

\begin{proof}
    Let $1 \le d \le \alpha(G)$ be a number for which the upper bound in \Cref{claim:chromatic_bound} holds. This 
    gives us the left inequality. For the right inequality,
    let $H$ be an induced subgraph of $G$ attaining $\psi_d(G)$. Then,
    \[
        \rank(A_G) \ge \rank(A_H) = \rank(J-A_{\overline{H}}-I) \ge \rank(A_{\overline{H}}+I) - 1 \ge \nu_d(\psi_d(G)) - 1 \qedhere 
    \]
\end{proof}

\section{Rank-Ramsey Graphs from Minors of Kronecker Powers}
\label{sect:kron_powers}

In this section we obtain a polynomial separation between $\nu_d(n)$ and $n$, for all $d \ge 41$.

\begin{theorem}
    \label{thm:nu_bound_kron_power}
    The following bounds hold:
    \begin{enumerate}
        \item $\nu_{41}(n) = \mathcal{O} \left( n^{1 - \frac{1}{10000}} \right)$.
        \item For any sufficiently large $d$, $\nu_d(n) = \mathcal{O} \left( n^{\log_{296} (232)} \right)$, where $\log_{296} (232) \approx0.957 $.
    \end{enumerate}
\end{theorem}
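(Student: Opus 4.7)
The plan is to construct a base graph $B$ from the Clebsch graph $C$ that beats the trivial bound of \Cref{prop:trivial_bound_nu}, and then amplify via iterated strong products. Among the seven known triangle-free strongly regular graphs, the Clebsch graph $C$ is uniquely suited, as its spectrum $\{5,\,1^{10},\,-3^{5}\}$ features the eigenvalue $1$ with large multiplicity. However, $C$ itself is useless directly: $A_C$ has no eigenvalue $-1$, so $\rank(A_C + I) = 16$ is full.

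The first step is to apply the \emph{clique tensoring trick}: form $B \eqdef C \otimes K_t$ for a parameter $t \ge 2$. The spectrum of $A_B = A_C \otimes A_{K_t}$ consists of pairwise products $\lambda_C \cdot \lambda_K$ with multiplicities multiplied, so the eigenvalue $1$ of $A_C$ (multiplicity $10$) combined with $-1$ of $A_{K_t}$ (multiplicity $t-1$) contributes the eigenvalue $-1$ of $A_B$ with multiplicity exactly $10(t-1)$. Hence
\[
    \rank(A_B + I) \;=\; 16t - 10(t-1) \;=\; 6t + 10
\]
on $v(B) = 16t$ vertices. Moreover, any clique in $B$ must project to a set of distinct vertices in $C$ forming a clique there, and $C$ is triangle-free, so $\omega(B) = 2$.

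Next I would take the $k$-fold strong product $B^{\boxtimes k}$. Using the identity $A_{H_1 \boxtimes H_2} + I = (A_{H_1} + I) \otimes (A_{H_2} + I)$, the complement rank becomes $(6t+10)^k$ on $(16t)^k$ vertices, with clique number $2^k$. For part~(1), taking $k = 5$ keeps $\omega(B^{\boxtimes k}) \le 32 \le 41$, and a small choice of $t$ yields an exponent $\log_{16t}(6t+10)$ comfortably below $1 - \tfrac{1}{10000}$. Property~3 of \Cref{lem:props_nu_eta} extends the bound to all $n$. For part~(2), an analogous construction with the Schl\"afli-graph complement $\overline{S}$ (spectrum $\{10,\,1^{20},\,-5^{6}\}$, giving $v = 27t$, $\rank(A + I) = 7t+20$, $\omega = 3$) admits higher strong powers since $\omega = 3^k$; tuning $t$ and $k$, and combining Clebsch- and Schl\"afli-derived factors via Property~4 of \Cref{lem:props_nu_eta}, produces a base graph (or induced subgraph of a product) with parameters $(v,r) = (296,232)$, yielding the exponent $\log_{296}(232) \approx 0.957$ once $d$ is large enough to absorb the resulting clique number.

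The main obstacle is the spectral and combinatorial verification of the clique tensoring trick: one must rigorously compute the spectrum of $A_C \otimes A_{K_t}$ via the Kronecker product of spectra, and combinatorially argue that $\omega(C \otimes K_t) = \omega(C) = 2$ by analysing the projection of any clique onto each coordinate. A secondary challenge, specific to part~(2), is matching the precise numerical constants $(296,232)$: this requires either delicately optimizing the parameters of an SRG-tensor construction, or taking a carefully chosen induced subgraph of a strong product in order to shave vertices without sacrificing the eigenvalue structure responsible for the rank drop.
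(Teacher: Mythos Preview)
Your base-graph step is correct and matches the paper exactly: the Clebsch graph $C$ tensored with $K_t$ has $16t$ vertices, $\omega=2$, and $\rank(A_B+I)=6t+10$, precisely as in \Cref{cor:nu_2_3_small_bounds}. The gap is in the amplification step.

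Taking the $k$-fold strong product $B^{\boxtimes k}$ gives order $(16t)^k$, rank $(6t+10)^k$, but clique number $2^k$. For part~(1) you fix $k=5$ so that $\omega\le 32\le 41$; but then for each $t$ you have a \emph{single} graph, and extending to general $n$ via Property~3 of \Cref{lem:props_nu_eta} (disjoint unions) yields only a \emph{linear} bound $\nu_{41}(n)\le \tfrac{R_0}{N_0}\,n+O(1)$ with some constant $R_0/N_0<1$, not a polynomial separation $O(n^{1-\varepsilon})$. Varying $t$ does not help either: along the sequence $n(t)=(16t)^5$, the ratio $r(t)/n(t)=\bigl((6t+10)/(16t)\bigr)^5$ tends to the positive constant $(3/8)^5$, so again no sublinear exponent emerges. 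The same obstruction applies to part~(2): for any \emph{fixed} $d$, the constraint $2^k\le d$ (or $3^k\le d$) caps $k$, and you are left with only a linear improvement.

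What the paper does instead is \Cref{thm:subtensor_clique_free}: it takes the $N$-th Kronecker power $(A_G+I)^{\otimes N}$ with $N\to\infty$ (so the rank is $(6t+10)^N$), and then uses a first-moment argument over \emph{random} index sequences to locate a large principal minor with no $K_{d+1}$, for the \emph{fixed} target $d$. The size of this minor is roughly $\bigl((16t)^{d+1}/\sum_s \#K_s(G)\,s^{d+1}\bigr)^{N/(d+1)}$, which grows as a power of the ambient order; comparing exponents yields $\nu_d(n)=O(n^{\delta(d,t)})$ with $\delta<1$. This random-subgraph step is the essential idea you are missing; without it there is no way to let the power $N\to\infty$ while keeping $\omega$ bounded.

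Finally, the pair $(296,232)$ is not the order and rank of any base graph or product in the construction. It arises from the limit $\delta(d,l)\to \ln(6l+10)/(\ln(16l)-\ln 2)$ as $d\to\infty$ in the probabilistic bound, optimised at $l=37$: one gets $6\cdot 37+10=232$ and $16\cdot 37/2=296$. Your attempt to realise these numbers by combining Clebsch- and Schl\"afli-derived factors, or by shaving vertices from a product, is on the wrong track.
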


As we know, such gaps are implied by 
the existence of low-rank Boolean matrices with the desired properties.
However, whereas most (non-explicit) constructions of Ramsey graphs
found in the literature
use the probabilistic method, this avenue is closed for us. This is due to
the fact that we have no natural distribution over low-rank Boolean matrices. 
Consequently, we opt for \textit{explicit} constructions. 

The proof of \Cref{thm:nu_bound_kron_power} is divided into two parts:
first, we find a \textit{constant} size graph $G$, beating the trivial 
bound from \Cref{prop:trivial_bound_nu} for some small $d$.
Then, we show how to locate a good principal minor within large Kronecker powers of $(A_G + I)$, such that the clique number remains low.

\subsection{Part 1: Finding a Base Graph}
\label{subsect:base_graph}

Our first goal is to find \textit{some} graph $G$, such that 
$\rank(A_G + I)$ is strictly smaller than $\frac{v(G)}{\omega(G)}$.
It is even better if $G$ has a \textit{small} clique number (say, $\omega(G)=2$),
as this improves the bounds in the blowup procedure (of part 2).
This is equivalent to seeking a graph where the 
eigenvalue $-1$ has a large multiplicity,
since $\rank(A_G + I) = v(G) - \mu_{A_G}(-1)$.

An exhaustive search reveals that there is no such graph with fewer than $10$ 
vertices. Therefore, we seek larger, highly structured graphs. 
\textit{Strongly-regular} graphs are natural candidates for such a search. 
These are regular,
highly symmetric graphs with only \textit{three} distinct eigenvalues
(the Perron eigenvalue, equal to the degree, and two other distinct eigenvalues):

\begin{definition}
    \label{defn:srg}
    Let $v > k > \max\{\lambda, \mu \} \ge 0$ be integers.
    We say that $G$ is an $\srg(v,k,\lambda,\mu)$ (i.e., strongly regular graph with parameters $v$, $k$, $\lambda$ and $\mu$) if
    \begin{enumerate}
        \item $G$ has order $v$.
        \item It is $k$-regular.
        \item Every two adjacent vertices in $G$ have $\lambda$ common neighbours.
        \item Every two non-adjacent vertices in $G$ have $\mu$ common neighbours.\footnote{
        Here we use the standard notation $\mu$ for the number of common neighbours of non-adjacenct neighbours in an SRG.
        This is not to be confused with $\mu_A(\lambda)$, the notation used for the multiplicity of $\lambda$ in $A$.
        }
    \end{enumerate}
\end{definition}

We cannot use strongly regular graphs as base graphs for $d=2$,
because apart from $K_2$ and the complete bipartite graphs $K_{n,n}$ (whose spectrum is \textit{symmetric}, and
thus cannot beat the trivial bound of \Cref{prop:trivial_bound_nu}), 
only seven triangle-free strongly regular graphs are known:
$C_5$, Petersen, Clebsch, Hoffman-Singleton, Gewirtz, Mesner, and the Higman-Sims graph.
Whether this list is exhaustive or not is a famous open question (Cf.\ \cite{biggs1993algebraic, brouwer2012strongly}).
But even if such a graph exists, another problem arises.

\begin{proposition}
    \label{prop:minus_one_tf_srg}
    $G = K_2$ is the only triangle-free strongly-regular graph for which $-1 \in \spec (G)$.
\end{proposition}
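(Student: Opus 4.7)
The plan is a direct application of the spectral structure of strongly regular graphs. The non-Perron eigenvalues of an $\srg(v, k, \lambda, \mu)$ are the two roots $r > s$ of the polynomial $x^2 - (\lambda - \mu)x - (k - \mu)$. The parameter inequality $k > \mu$ built into \Cref{defn:srg} forces the constant term to be strictly negative, so $rs < 0$ and hence $r > 0 > s$.

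Now suppose $G$ is a triangle-free strongly-regular graph with $-1 \in \spec(G)$. Triangle-freeness forces $\lambda = 0$, reducing the quadratic to $x^2 + \mu x - (k - \mu) = 0$. Since $k \ge 1$ and $r > 0$, the only non-principal eigenvalue that can equal $-1$ is the smaller root $s$. Substituting $x = -1$ yields $1 - \mu - (k - \mu) = 1 - k = 0$, so $k = 1$. A connected $1$-regular graph is $K_2$, which directly satisfies \Cref{defn:srg} (with $\lambda = 0$ and $\mu$ vacuous), completing the proof.

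The argument is essentially a one-line substitution into the SRG characteristic equation, so the only real obstacle is a bookkeeping one: checking that the parameter constraints of \Cref{defn:srg} pin $-1$ down as the smaller root $s$ rather than the middle root $r$, and handling the borderline case of disconnected $1$-regular graphs $kK_2$ with $k \ge 2$, which literally satisfy the four axioms of \Cref{defn:srg} with $\lambda = \mu = 0$ but have only two distinct eigenvalues and are excluded under the standard convention (consistent with the paper's own enumeration of triangle-free SRGs) that strongly regular graphs are connected/primitive.
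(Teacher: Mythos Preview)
Your proof is correct and in fact cleaner than the paper's. Both arguments rest on the standard spectral relations for strongly regular graphs, but the paper splits into cases $r=-1$ and $s=-1$; in the latter it only uses $r>s=-1$ to deduce $\mu<2$, so it must still handle the sub-case $\mu=1$, $r=0$, which it identifies as a Moore graph of diameter two and then rules out by inspecting the spectra of $C_5$, Petersen, Hoffman--Singleton, and the hypothetical $57$-regular Moore graph one by one. You instead observe at the outset that the strict inequality $k>\mu$ in \Cref{defn:srg} forces $rs<0$, hence $r>0$; this simultaneously disposes of $r=-1$ and, after substituting $x=-1$ into the quadratic, yields $k=1$ in one line with no case split on $\mu$. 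In effect your observation $r>0$ (rather than merely $r>-1$) shows that the paper's Moore-graph sub-case is vacuous under its own parameter constraints. Your handling of the disconnected $mK_2$ boundary case via the usual primitivity convention is also appropriate and matches the paper's implicit usage.
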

\begin{proof}
    Say $G$ is in $\srg(v, k, \lambda, \mu)$, let $A$ be its adjacency matrix, and let $r > s$ be its two distinct non-Perron eigenvalues. Observe that $\lambda = 0$ since $G$ is triangle-free. The following relation among the parameters of strongly-regular graph are well known (e.g., \cite{brouwer2012strongly}):
    \[
        (1)\ \ k-\mu = -rs,\ \ (2)\ \ \lambda = \mu + r + s, \text{ and }\ \ (3)\ \ (v-k-1)\mu = k(k-\lambda-1)
    \]
    If $r=-1$ then $(1) \implies k-\mu = s < r < 0$, which implies that $k < \mu$, a contradiction. Alternatively, if $s=-1$, then $(2) \implies \mu = 1-r < 2$ and either $\mu = 0$ (and $(3) \implies k=1$, therefore $G=K_2$), or $\mu=1$ (and $r=0$). In the latter case, it follows (see \cite{deutsch2001strongly})
    that the graph must be a Moore graph, since substituting back into $(3)$ yields:
    \[
        v = k^2 + 1 = 1 + k \sum_{i=0}^{\diam(G)-1}  (k-1)^i,
    \]
    thus $G$ is a \textit{Moore graph} (which is clearly connected of diameter two, as it is strongly-regular). It remains to rule out the Moore graphs. Firstly, $G$ can be neither the complete graph, nor an odd cycle over $>5$ vertices, as they are not strongly-regular. Moreover, $G$ cannot be $C_5$, the Petersen graph, the Hoffman-Singleton graph, or the famed (hypothesised) $57$-regular Moore graph, as their spectra are all fully determined (c.f. \cite{biggs1993algebraic, godsil2001algebraic}), and do not match $r=0, s=-1$.
\end{proof}

\Cref{prop:minus_one_tf_srg} \textit{appears} to rule out the use of strongly regular 
graphs as base graphs, if we insist on $d=2$.
However, we have a way out:
A triangle-free strongly regular graph \textit{can} have the eigenvalue $1$.
This suggests that we find a way to negate 
the spectrum of such graphs, while retaining triangle-freeness.
As the following two simple lemmas show, this is indeed possible. 

\begin{lemma}
    \label{lem:clique_kron}
    For any two graphs, $G$ and $H$, $\omega(G \otimes H) \le \min\{\omega(G), \omega(H)\}$.
\end{lemma}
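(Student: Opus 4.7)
The plan is to take an arbitrary clique $S$ in $G \otimes H$ and show, by projecting to each coordinate, that $|S|$ is bounded by $\omega(G)$ and by $\omega(H)$. The heart of the matter is the observation that because $G$ and $H$ are simple graphs (no self-loops), the two coordinate projections restricted to $S$ must be \emph{injective}.

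Concretely, I would fix a clique $S \subseteq V(G) \times V(H)$ in $G \otimes H$ and let $\pi_1, \pi_2$ denote the projections to the two coordinates. Suppose toward contradiction that two distinct vertices $(i,a), (i,b) \in S$ share their first coordinate. Then, by \Cref{defn:graph_kron}, adjacency in $G \otimes H$ requires $i \sim_G i$, which is impossible in a simple graph. Hence $\pi_1|_S$ is injective, and by the symmetric argument so is $\pi_2|_S$. In particular $|S| = |\pi_1(S)| = |\pi_2(S)|$.

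Next I would verify that $\pi_1(S)$ is a clique in $G$: given two distinct $i, j \in \pi_1(S)$, let $(i,a), (j,b) \in S$ be their unique preimages; the adjacency of these vertices in $G \otimes H$ forces $i \sim_G j$ (and $a \sim_H b$). Therefore $\pi_1(S)$ is a clique in $G$, so $|S| \le \omega(G)$, and symmetrically $|S| \le \omega(H)$. Taking the minimum yields the stated bound.

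There is no real obstacle here; the only subtlety is not overlooking the possibility that two vertices of $S$ might agree on one coordinate, which is precisely where the simple-graph (loopless) assumption is invoked.
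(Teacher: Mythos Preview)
Your proof is correct and follows essentially the same approach as the paper: project a clique of $G \otimes H$ to each coordinate, use looplessness to get injectivity, and observe that each projection is a clique in the respective factor. Your write-up is simply a more detailed version of the paper's two-line argument.
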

\begin{proof}
Let $S \subseteq V(G) \times V(H)$ be a clique in $G \otimes H$.
By definition of the Kronecker product, the projections of $S$ onto $G$ is a clique, 
and likewise for $H$.
As the graphs are simple, every vertex appears in these projections exactly once.
\end{proof}

\begin{lemma}
    \label{lem:neg_spec}
    Let $A$ be the adjacency matrix of a graph $G$ and let $l > 2$ be an integer. Then:
    \[
        \mu_{A \otimes K_l} (-1) = (l-1) \cdot \mu_A (1)
    \]
\end{lemma}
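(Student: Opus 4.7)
The plan is to use the well-known fact that for square matrices $A$ and $B$, the spectrum of the Kronecker product $A \otimes B$ consists precisely of all products $\lambda\mu$ with $\lambda \in \spec(A)$ and $\mu \in \spec(B)$, counted with multiplicity equal to $\mu_A(\lambda)\cdot\mu_B(\mu)$. Applied to $B = A_{K_l}$, this reduces the problem to understanding when such a product equals $-1$.

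The first step is to record the spectrum of the complete graph: $A_{K_l} = J_l - I_l$ has the simple Perron eigenvalue $l-1$ (with eigenvector $\mathbbm{1}$) and the eigenvalue $-1$ with multiplicity $l-1$ (on the orthogonal complement of $\mathbbm{1}$). Consequently,
\[
    \mu_{A \otimes A_{K_l}}(-1) = \mu_A\!\left(\tfrac{-1}{l-1}\right)\cdot 1 \;+\; \mu_A(1)\cdot (l-1).
\]

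The key observation is that the first summand vanishes whenever $l>2$. Indeed, $A$ is an integer matrix, so every eigenvalue of $A$ is an algebraic integer; but $-1/(l-1)$ is a rational number, and a rational algebraic integer is an ordinary integer. Since $l-1\ge 2$, the number $-1/(l-1)$ is not an integer, hence cannot occur in $\spec(A)$. This leaves only the contribution $(l-1)\cdot \mu_A(1)$, which is exactly the claimed identity.

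The only subtlety worth spelling out is the justification that the Kronecker spectrum formula applies even when $A$ or $A_{K_l}$ might not be diagonalizable in principle; here this is immediate because $A$ is symmetric (hence orthogonally diagonalizable) and $A_{K_l}$ is as well, so $A \otimes A_{K_l}$ is symmetric and its eigenvalues with multiplicities are obtained by multiplying eigenvalues across the two factors. No obstacle of substance arises; the whole argument is a short spectral computation whose only nontrivial ingredient is the algebraic-integer argument ruling out $\lambda = -1/(l-1)$ for $l>2$.
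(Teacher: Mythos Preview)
Your proof is correct and follows essentially the same approach as the paper: both use the multiplicativity of the Kronecker spectrum, the known spectrum of $K_l$, and the algebraic-integer argument to rule out the non-integer rational eigenvalue. Your write-up is in fact slightly more careful with signs (the relevant excluded value is $-1/(l-1)$, and you correctly identify the Perron eigenvalue of $K_l$ as $l-1$).
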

\begin{proof} 
    Recall that the spectrum of the Kronecker Product of two matrices is the pairwise product of their respective eigenvalues. Since
    $\spec(A_{K_l}) = \{ l^{(1)}, -1^{(l-1)}$\}, it follows that
    \[
        \mu_{A \otimes K_l} (-1) = (l-1) \cdot \mu_A (1) + \mu_A \left( \frac{1}{l-1} \right)
    \]
    but $\mu_A (\frac{1}{l-1}) = 0$, since $A$ is symmetric and binary, and thus 
    all its eigenvalues are algebraic integers. 
\end{proof}

In combination, \Cref{lem:clique_kron} and \Cref{lem:neg_spec} yield
some base-graphs and upper bounds on $\nu_2$, and $\nu_3$.

\begin{corollary}
    \label{cor:nu_2_3_small_bounds}
    For any $l > 2$, we have $\nu_2(16l) \le 6l + 10$, and $\nu_3(27l) \le 7l + 20$.
\end{corollary}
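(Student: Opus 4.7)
The plan is to apply Lemmas \ref{lem:clique_kron} and \ref{lem:neg_spec} to a carefully chosen pair of strongly regular base-graphs, and then read off the claimed bounds.

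First, for the $\nu_2$ bound, I will take the base-graph to be the Clebsch graph $C$, which is an $\srg(16, 5, 0, 2)$. In particular $C$ is triangle-free, so $\omega(C) = 2$. Its spectrum is well-known to be $\{5^{(1)}, 1^{(10)}, (-3)^{(5)}\}$, so $\mu_{A_C}(1) = 10$. Forming $G \eqdef C \otimes K_l$ on $16l$ vertices, Lemma \ref{lem:clique_kron} gives $\omega(G) \le \min\{\omega(C), \omega(K_l)\} = 2$, while Lemma \ref{lem:neg_spec} yields $\mu_{A_G}(-1) = (l-1) \cdot 10$. Hence
\[
    \rank(A_G + I) = v(G) - \mu_{A_G}(-1) = 16l - 10(l-1) = 6l + 10,
\]
which certifies $\nu_2(16l) \le 6l + 10$.

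For the $\nu_3$ bound, I will use the complement of the Schl\"afli graph, call it $S$. Since the Schl\"afli graph is $\srg(27, 16, 10, 8)$ with spectrum $\{16^{(1)}, 4^{(6)}, (-2)^{(20)}\}$, its complement $S$ is $\srg(27, 10, 1, 5)$ with spectrum $\{10^{(1)}, 1^{(20)}, (-5)^{(6)}\}$. Crucially, $S$ is $K_4$-free (equivalently, the Schl\"afli graph has independence number $3$), so $\omega(S) = 3$, and $\mu_{A_S}(1) = 20$. Setting $H \eqdef S \otimes K_l$ on $27l$ vertices, the same two lemmas give $\omega(H) \le \min\{3, l\} = 3$ (using the hypothesis $l > 2$) and $\mu_{A_H}(-1) = (l-1) \cdot 20$. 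Therefore
\[
    \rank(A_H + I) = 27l - 20(l-1) = 7l + 20,
\]
which gives $\nu_3(27l) \le 7l + 20$.

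There is essentially no obstacle beyond identifying the correct base-graphs: the content of Proposition \ref{prop:minus_one_tf_srg} forces us away from a triangle-free SRG having $-1$ as an eigenvalue, and the ``clique tensoring trick'' encapsulated in Lemma \ref{lem:neg_spec} then demands that we locate triangle-free (respectively $K_4$-free) SRGs whose eigenvalue $+1$ has large multiplicity. Among the seven known triangle-free SRGs the Clebsch graph is the unique candidate for which $\mu_{A_C}(1)$ is substantial relative to $v(C)$ (so that $v - \mu(1)(l-1) - \mu(1)$ beats the trivial bound), and for the $K_4$-free case the complement of the Schl\"afli graph plays the analogous role. Once these base-graphs are in hand, the tensor product with $K_l$ simultaneously preserves clique number and converts copies of the eigenvalue $1$ into $(l-1)$ copies of the eigenvalue $-1$, and the bounds follow by arithmetic.
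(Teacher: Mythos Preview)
Your proof is correct and follows essentially the same approach as the paper: tensor the Clebsch graph (respectively, the complement of the Schl\"afli graph) with $K_l$, apply Lemma~\ref{lem:clique_kron} to control the clique number, and Lemma~\ref{lem:neg_spec} to convert the large multiplicity of the eigenvalue $1$ into a large multiplicity of $-1$, then read off $\rank(A_G+I)=v(G)-\mu_{A_G}(-1)$. Your write-up is in fact more detailed than the paper's, supplying the explicit SRG parameters and spectra rather than merely citing them.
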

\begin{proof}
    Let $\mathcal{C}$ be the Clebsch graph, and let $\mathcal{S}$ be the Schl\"afli graph. 
    It is known that:
    \[
        \omega(\mathcal{C}) = 2,\ \ \mu_{A_\mathcal{C}}(1) = 10,\ \  v(\mathcal{C}) = 16
        \text{\quad and \quad} \omega(\overline{\mathcal{S}}) = 3,\ \  \mu_{A_{\overline{\mathcal{S}}}}(1) = 20,\ \ v(\overline{\mathcal{S}}) = 27
    \]
    Consider the graph $G = \mathcal{C} \otimes K_l$.
    By construction, $v(G) = v(\mathcal{C}) \cdot v(K_l) = 16l$, and 
    by \Cref{lem:neg_spec}, it follows that $\mu_{A_G}(-1) = \mu_{A_\mathcal{C}}(1) \cdot (l-1) = 10(l-1)$, and therefore $\rank(A_G + I) = v(G) - \mu_{A_G}(-1) = 6l + 10$.
    Moreover, $\mathcal{C}$ is triangle-free, and thus by \Cref{lem:clique_kron}, so is $G$. Similarly $\overline{\mathcal{S}}$ yields the claim about $\nu_3$.
\end{proof}

\begin{figure}[H]
    \vspace{-0.1in}
    \centering
    \label{fig:clebsch_k3}
    \scalebox{0.81}{
    \begin{tikzpicture}[scale=0.85, every node/.style={inner sep=0,outer sep=0}]
        \tikzset{vertex/.style = {shape=circle,draw,minimum size=0.5em}}
        \tikzset{edge/.style = {-}}
        \pgfdeclarepattern{
            name=hatch,
            parameters={\hatchsize,\hatchangle,\hatchlinewidth},
            bottom left={\pgfpoint{-.1pt}{-.1pt}},
            top right={\pgfpoint{\hatchsize+.1pt}{\hatchsize+.1pt}},
            tile size={\pgfpoint{\hatchsize}{\hatchsize}},
            tile transformation={\pgftransformrotate{\hatchangle}},
            code={
                \pgfsetlinewidth{\hatchlinewidth}
                \pgfpathmoveto{\pgfpoint{-.1pt}{-.1pt}}
                \pgfpathlineto{\pgfpoint{\hatchsize+.1pt}{\hatchsize+.1pt}}
                \pgfpathmoveto{\pgfpoint{-.1pt}{\hatchsize+.1pt}}
                \pgfpathlineto{\pgfpoint{\hatchsize+.1pt}{-.1pt}}
                \pgfusepath{stroke}
                }
        }
        \tikzset{
            hatch size/.store in=\hatchsize,
            hatch angle/.store in=\hatchangle,
            hatch line width/.store in=\hatchlinewidth,
            hatch size=5pt,
            hatch angle=0pt,
            hatch line width=.2pt,
        }
        
        \node[vertex] (v1) at  (16.0000000000000,5.09335938000000) {};
        \node[vertex, minimum size=3em, dashed, color=gray] (v2) at  (16.0000000000000,9.75273438000000) {};
        \node[vertex, color=black, pattern=hatch, hatch size=2pt, hatch angle=0, pattern color=red] (v2_2) at  (16.0000000000000,10.05273438000000) {};
        \node[vertex, color=black, pattern=hatch, hatch size=2pt, hatch angle=0, pattern color=orange] (v2_1) at  (15.7000000000000,9.48273438000000) {};
        \node[vertex, color=black, pattern=hatch, hatch size=2pt, hatch angle=0, pattern color=black!60!green] (v2_3) at  (16.3000000000000,9.48273438000000) {};
        \node[vertex] (v3) at  (17.6421875000000,7.35546875000000) {};
        \node[vertex] (v4) at  (18.7394531200000,1.32421875000000) {};
        \node[vertex] (v5) at  (13.3421875000000,4.22929688000000) {};
        \node[vertex, minimum size=3em, dashed, color=gray] (v6) at  (11.5691406200000,6.53320312000000) {};
        \node[vertex, color=black, pattern=hatch, hatch size=2pt, hatch angle=0, pattern color=orange] (v6_1) at  (11.5691406200000,6.83320312000000) {};
        \node[vertex, color=black, pattern=hatch, hatch size=2pt, hatch angle=0, pattern color=red] (v6_2) at  (11.8691406200000,6.28320312000000) {};
        \node[vertex, color=black, pattern=hatch, hatch size=2pt, hatch angle=0, pattern color=black!60!green] (v6_3) at  (11.2691406200000,6.28320312000000) {};
        \node[vertex] (v7) at  (16.0000000000000,2.29804688000000) {};
        \node[vertex] (v8) at  (20.4308593800000,6.53320312000000) {};
        \node[vertex] (v9) at  (13.2605468800000,1.32421875000000) {};
        \node[vertex] (v10) at  (18.6578125000000,4.22929688000000) {};
        \node[vertex] (v11) at  (14.3578125000000,2.83164062000000) {};
        \node[vertex] (v12) at  (13.3421875000000,5.95781250000000) {};
        \node[vertex] (v13) at  (14.3578125000000,7.35546875000000) {};
        \node[vertex] (v14) at  (16.0000000000000,7.88906250000000) {};
        \node[vertex] (v15) at  (18.6578125000000,5.95781250000000) {};
        \node[vertex] (v16) at  (17.6421875000000,2.83164062000000) {};
        
        \draw[edge] (v1) to (v3);
        \draw[edge] (v1) to (v5);
        \draw[edge] (v1) to (v7);
        \draw[edge] (v1) to (v10);
        \draw[edge] (v1) to (v13);
        \draw[edge] (v2) to (v5);
        \draw[edge, color=blue, opacity=0.5] (v2_1) to (v6_2);
        \draw[edge, color=blue, opacity=0.5] (v2_1) to (v6_3);
        \draw[edge, color=blue, opacity=0.5] (v2_2) to (v6_1);
        \draw[edge, color=blue, opacity=0.5] (v2_2) to (v6_3);
        \draw[edge, color=blue, opacity=0.5] (v2_3) to (v6_1);
        \draw[edge, color=blue, opacity=0.5] (v2_3) to (v6_2);
        \draw[edge] (v2) to (v8);
        \draw[edge] (v2) to (v10);
        \draw[edge] (v2) to (v14);
        \draw[edge] (v3) to (v4);
        \draw[edge] (v3) to (v6);
        \draw[edge] (v3) to (v14);
        \draw[edge] (v3) to (v15);
        \draw[edge] (v4) to (v5);
        \draw[edge] (v4) to (v8);
        \draw[edge] (v4) to (v9);
        \draw[edge] (v4) to (v16);
        \draw[edge] (v5) to (v11);
        \draw[edge] (v5) to (v12);
        \draw[edge] (v6) to (v7);
        \draw[edge] (v6) to (v9);
        \draw[edge] (v6) to (v12);
        \draw[edge] (v7) to (v8);
        \draw[edge] (v7) to (v11);
        \draw[edge] (v7) to (v16);
        \draw[edge] (v8) to (v13);
        \draw[edge] (v8) to (v15);
        \draw[edge] (v9) to (v10);
        \draw[edge] (v9) to (v11);
        \draw[edge] (v9) to (v13);
        \draw[edge] (v10) to (v15);
        \draw[edge] (v10) to (v16);
        \draw[edge] (v11) to (v14);
        \draw[edge] (v11) to (v15);
        \draw[edge] (v12) to (v13);
        \draw[edge] (v12) to (v15);
        \draw[edge] (v12) to (v16);
        \draw[edge] (v13) to (v14);
        \draw[edge] (v14) to (v16);
        
    \end{tikzpicture}}
    \caption{Illustration of the Kronecker product $\mathcal{C} \otimes K_3$, where $\mathcal{C}$ is the Clebsch graph.
    Similarly to \textit{blowup}, vertices are replaced by anticliques, and edges by bipartite graphs.
    The key difference is that here edges are replaced by complete bipartite graphs \textit{minus} the identity matching (so the graph is twin-free). }
\end{figure}
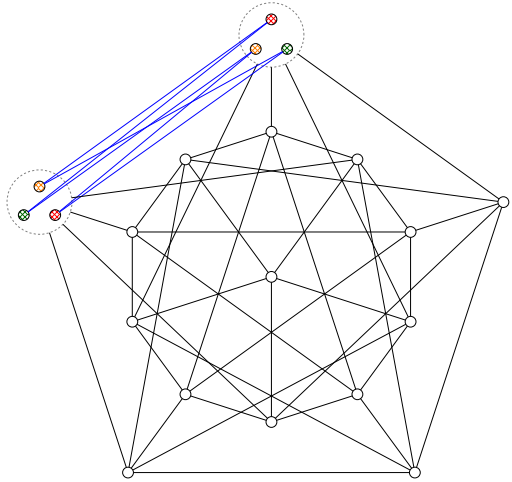

\begin{remark}
    In a very recent paper, Bamberg, Bishnoi, Ihringer and Ravi \cite{bamberg2024ramsey} constructed a family of Cayley graphs 
    of $\mathbb{F}_{2^n}^2$ that improves upon the constant of our triangle-free construction, lowering the bound from 
    $\nu_2(n) \le (3/8 + o(1))n$ (see \Cref{cor:nu_2_3_small_bounds}), to $\nu_2(n) \le (1/4 + o(1)) n$. 
\end{remark}

\subsection{Part 2: Principal Minors of Kronecker Powers}

Next we use the base-graphs from Part 1 to construct infinite 
families of graphs with
bounded clique numbers, which exhibit a polynomial separation between the order 
and rank of the complement. Let $G$ be
a base-graph, and let $H$ be the graph with adjacency matrix $(A_G + I)^{\otimes n} - I$. Clearly $\omega(H) = \omega(G)^n$
(take the direct product of $n$ copies of a largest clique of $G$, plus self loops).
However, as shown below, if $\omega(G)$ is sufficiently \textit{small},
then there is, nevertheless, a \textit{large principal minor} of $A_H$ with no \textit{large} cliques.

\begin{theorem}
    \label{thm:subtensor_clique_free}
    Let $G=([k], E)$ be a graph, $d > 1$ a positive integer, and $r := \rank\left(A_G+I\right)$.
    Then, for every large enough $n$ there exists a graph $H$ such that:
    \begin{enumerate}
        \item $\omega(H) \le d$.
        \item $\rank(A_H + I) \le r^n$
        \item $v(H) = \frac{k^n}{d} \left( \sum_{t=1}^{\omega(G)} \#K_t(G) \cdot t^{d+1} \right)^{-\frac{n}{d+1}} - 1$, where $\#K_t(G)$ is the number of copies of $K_t$ in $G$.
    \end{enumerate} 
\end{theorem}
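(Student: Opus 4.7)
The plan is to take $H$ to be a large induced subgraph of the strong product $H_0 \eqdef G^{\boxtimes n}$. Observe that $A_{H_0} + I = (A_G + I)^{\otimes n}$, since the $(\vec{i}, \vec{j})$-entry of the right-hand side is $1$ exactly when each coordinate is equal or $G$-adjacent, which is precisely the adjacency-or-equality relation in $G^{\boxtimes n}$. By multiplicativity of rank under the Kronecker product, $\rank(A_{H_0} + I) = r^n$. For any $S \subseteq V(H_0)$, the matrix $A_{H_0[S]} + I$ is a principal submatrix of $A_{H_0} + I$, so \Cref{prop:rank_of_gen_minor} yields $\rank(A_{H_0[S]} + I) \le r^n$. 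Thus item $(2)$ comes for free once we set $H \eqdef H_0[S]$; it only remains to find an $S$ of the promised size whose induced subgraph has no $(d+1)$-clique.

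Next I would bound the number of $(d+1)$-cliques in $H_0$. The structural observation is that for any clique $\{\vec{v}^1, \dots, \vec{v}^{d+1}\}$ of $G^{\boxtimes n}$, the projection $\{v^a_l : a \in [d+1]\}$ onto every coordinate $l$ is a clique of $G$. Hence the count of ordered $(d+1)$-tuples of pairwise adjacent-or-equal (not necessarily distinct) vertices of $H_0$ factors coordinate-wise, with each factor equal to the number of maps $[d+1] \to V(G)$ whose image is a clique of $G$. Using the crude per-coordinate upper bound $C \eqdef \sum_{t=1}^{\omega(G)} \#K_t(G) \cdot t^{d+1}$ (namely, for each $t$-clique of $G$, all $t^{d+1}$ maps into it, summed over cliques), we obtain at most $C^n$ ordered tuples. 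Dividing by $(d+1)!$ to pass from ordered tuples to unordered cliques gives the bound $N_{d+1}(H_0) \le C^n / (d+1)!$.

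Now the probabilistic method delivers $S$. Set $M \eqdef \lfloor k^n / d \cdot C^{-n/(d+1)} \rfloor$, and draw a uniformly random subset $S \subseteq V(H_0)$ of size $M$. Each $(d+1)$-clique of $H_0$ survives into $S$ with probability $\binom{M}{d+1} / \binom{k^n}{d+1} \le (M/k^n)^{d+1}$, so the expected number of $(d+1)$-cliques fully contained in $S$ is at most
\[
    \frac{C^n}{(d+1)!} \cdot \left( \frac{M}{k^n} \right)^{d+1} \le \frac{1}{(d+1)!\, d^{d+1}} < 1.
\]
Since this expectation is strictly less than $1$ and the number of cliques is a nonnegative integer random variable, some realization has zero $(d+1)$-cliques. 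Fix such an $S$ and set $H \eqdef H_0[S]$. Then $\omega(H) \le d$ and $v(H) = M \ge k^n / d \cdot C^{-n/(d+1)} - 1$, which together with the already-established rank bound completes the proof.

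The main obstacle I anticipate is the clique-count bookkeeping. The factorization of ordered tuples across the $n$ coordinates is clean, but the exact per-coordinate count uses surjections $S(d+1, t) \cdot t!$ and would not yield the attractive closed form advertised. Replacing it with the looser $t^{d+1}$ is the key accounting step that produces the stated expression, while remaining strong enough that the final expected-value inequality is still below $1$.
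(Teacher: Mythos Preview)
Your proof is correct and follows essentially the same approach as the paper's: both take a random sample of vertices inside $G^{\boxtimes n}$ and run a first-moment argument using the coordinate-wise factorisation of the ``all-ones minor'' event, with the same crude per-coordinate upper bound $\sum_t \#K_t(G)\,t^{d+1}$. The only cosmetic differences are that the paper samples indices i.i.d.\ \emph{with replacement} (hence phrases everything via generalised minors and a union bound on probabilities), whereas you sample a uniform subset \emph{without replacement} and bound an expected count; these are equivalent first-moment arguments, and your version arguably reads more cleanly since it avoids the generalised-minor detour.
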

\begin{proof}
    Let $M = (A_G + I)^{\otimes n}$, and let $X_1, \dots, X_N  \overset{\mathrm{iid}}{\sim} [k]^n$ be indices in $M$,
    sampled independently and uniformly at random (the value of $N$ will be fixed later). We find an upper bound on
    the probability that the principal (generalised) minor $W = M[X_1, \dots, X_N]$ of $M$,
    contains a principal $(d + 1) \times (d + 1)$ all-ones minor. By union bound:
    
    \[
        \Pr \left[ \exists S \in \binom{[N]}{d+1}: W[\{X_i\}_{i \in S}] = J_{d+1} \right]
        \le N^{d+1} \cdot \Pr_{X_1, \dots, X_{d+1} \sim [k]^n} \left[ W[X_1, \dots, X_{d+1}] = J_{d+1} \right]. \]
The choice of $X_1, \dots, X_{d+1}$ determines
a $n \times (d+1)$ matrix with entries 
$x_{i,j}\in [k]$. \\
    By definition of the
    Kronecker product the above event occurs exactly when
        \[ \forall i,j \in [d+1]: \forall l \in [n]: (A_G + I)_{x_{l, i}, x_{l,j}} = 1 \]
    and by independence, the probability of this event equals   
        \[ \Pr_{z_1, \dots, z_{d+1} \sim [k]} \left[ \forall i,j \in [d+1]: (A_G + I)_{z_i, z_j} = 1 \right] ^n.\]

    The latter expression counts the ways to place integer weights on $G$'s 
    vertices (with self-loops) such that all vertices in the support (i.e., with non-zero weights) form a clique.
    This is easily bounded by:\footnote{An inclusion-exclusion refinement of the following argument yields only 
    a negligible improvement in the results.}
    \[
        \Pr_{z_1, \dots, z_{d+1} \sim [k]} \left[ \forall i,j \in [d+1]: (A_G + I)_{z_i, z_j} = 1 \right] \le \frac{1}{k^{d+1}} \cdot \sum_{t=1}^{\omega(G)} \#K_t(G) \cdot t^{d+1}.
    \]
    
    If this expression is bounded away from $1$, then
    a good minor $W$ exists. When this is so, a bound on $N$ follows:
    \[
        N^{d+1} \cdot \left(\frac{1}{k^{d+1}} \cdot \sum_{t=1}^{\omega(G)} \#K_t(G) \cdot t^{d+1} \right)^n < 1 \implies N \le \left( \frac{k^{d+1}}{\sum_{t=1}^{\omega(G)} \#K_t(G) \cdot t^{d+1} } \right)^{\frac{n}{d+1}}
    \]
    
    We can, therefore, \textit{fix} some values for $X_1, \dots, X_N$, such that $W=M[X_1, \dots, X_N]$ contains no all-ones principal minor of size $d+1$. By \Cref{prop:rank_of_gen_minor}, $\rank(W) \le \rank(M) = r^n$,
    since $W$ is a generalised minor of $M$. 
    The proof is concluded by letting $A_H = W-I$.
\end{proof}

We are now ready to prove \Cref{thm:nu_bound_kron_power}.

\begin{proof}
    Fix two constants $d > 1$ and $l>2$. As in the proof of \Cref{cor:nu_2_3_small_bounds},
    let $G = \mathcal{C} \otimes K_l$, where
    $\mathcal{C}$ is the Clebsch graph. As shown there,
    \[
        v(G) = v(\mathcal{C}) \cdot v(K_l) = 16l,\ e(G) = e(\mathcal{C}) \cdot e(K_l) = 40 \binom{l}{2},\text{ and } \rank(A_G + I) = 6l + 10
    \]
    By applying \Cref{thm:subtensor_clique_free} to $G$, we obtain, for every large $N$
    a $K_{d+1}$-free graph $H$ with:
    \[
        \rank(A_H+I) \le (6l+10)^N, \text{ and } v(H) = \Theta \left( \frac{(16 l)^{d+1}}{ 16l + 40 \binom{l}{2} \left(2^{d+1} - 2\right) } \right)^{\frac{N}{d+1}}
    \]
    Denoting $n=v(H)$ and re-arranging, we may write:
    \[
        \rank(A_H + I) \le \mathcal{O} \left( n^\delta \right), \text{ where } \delta = { \frac{(d+1) \ln (6l + 10)}{(d+1) \ln(16l) - \ln \left(16l + 40 \binom{l}{2} (2^{d+1} - 2) \right) } }
    \]
    For the first bound, take $d=41$ and $l=26$. 
    
    \noindent    
    For the second bound, note that for any fixed $l$, the expression for $\delta$ decreases with $d$.
    \[
        \lim_{d\to\infty}\delta = \lim_{d\to\infty} { \frac{(d+1) \ln (6l + 10)}{(d+1) \ln(16l) - \ln \left(16l + 40 \binom{l}{2} (2^{d+1} - 2) \right) } } = \frac{\ln(6l+10)}{\ln(16l)-\ln 2}. 
    \]
    The minimum of this expression over the integers occurs at $l=37$. This yields,
    \[
        \delta = \frac{\ln(232)}{\ln(296)} + o_d(1)
    \]
    as claimed.
\end{proof}

\section{Rank-Ramsey Graphs from Lifts of Erd\H{o}s-R\'enyi Graphs}
\label{sect:ae_lift}

So far, we have constructed Rank-Ramsey graphs with constant $d$. In this section we proceed
to the range $d = \Theta(\log n)$, and prove the following theorem:

\begin{theorem}
    \label{thm:d_log_n_bound_nu}
    For any two constants $c, \varepsilon > 0$ with $c > 2 \left(\frac{2}{3\varepsilon}+1\right)^2$,
    there holds
    $
        \nu_{ c \log n}(n) \le \widetilde{\mathcal{O}} \left(n^{\frac{2}{3} + \varepsilon} \right)
    $.
\end{theorem}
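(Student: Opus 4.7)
The plan is to realise the desired graph as an induced subgraph of a certain $\NAE$-lift of three independent Erdős-Rényi graphs. Set $\tau \eqdef 9\varepsilon/(2+3\varepsilon)$ and $\alpha \eqdef \tau/3 = 3\varepsilon/(2+3\varepsilon)$, let $k \eqdef \lceil n^{1/(3-\tau)} \rceil$ (so that $k^2 = n^{2/(3-\tau)} = n^{2/3+\varepsilon}$), and sample three i.i.d.\ adjacency matrices $A_1, A_2, A_3$ of $G(k, 1/2)$. Let $\mathcal{L}$ denote the graph on vertex set $[k]^3$ whose adjacency matrix is the lift $M \eqdef \NAE(A_1, A_2, A_3)$. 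Since $\NAE(x_1, x_2, x_3) = x_1 + x_2 + x_3 - x_1 x_2 - x_1 x_3 - x_2 x_3$ is a sum of six monomials each of degree at most $2$, \Cref{lem:rank_of_lift} yields
\[
\rank(M + I) \le 1 + 3k + 3k^2 = \widetilde{O}(n^{2/(3-\tau)}) = \widetilde{O}(n^{2/3 + \varepsilon}),
\]
a bound inherited by every induced subgraph of $\mathcal{L}$. The target graph will be $G \eqdef \mathcal{L}[S]$ for a carefully selected $S \subseteq [k]^3$ of size $n$.

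For $T \subseteq [k]^3$ write $T_\ell \eqdef \{t^{(\ell)} : t \in T\}$ for its axis-projection onto the $\ell$-th coordinate; call $T$ \emph{degenerate} if $|T_\ell| < \alpha|T|$ for every $\ell \in \{1, 2, 3\}$, and \emph{balanced} otherwise. Every degenerate $T$ sits inside an axis-aligned thin box of volume below $(\alpha|T|)^3$, of which there are at most $k^{3\alpha|T|}$. Construct $S$ by keeping each point of $[k]^3$ independently with probability $p = \Theta(k^{-3\alpha}/\mathrm{polylog}(n))$. A union bound over thin boxes together with $\binom{(\alpha t)^3}{t} \le (e \alpha^3 t^2)^t$ shows that, with high probability, no subset $T \subseteq S$ of size $t = c \log n$ is degenerate, while simultaneously $|S| \ge p k^3 = \widetilde{\Omega}(k^{3-3\alpha}) = \widetilde{\Omega}(n)$. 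Trimming to exactly $|S| = n$, every clique of $G$ on at least $c \log n$ vertices is therefore balanced.

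To rule out balanced cliques, fix a balanced $T$ with $|T| = t = c \log n$ and, say, $|T_1| \ge \alpha t$ (the other two symmetric cases are treated identically), and condition on $A_2, A_3$. For each pair $\{u, v\} \subseteq T_1$, the requirement that every element-pair of $T$ crossing the fibers above $u$ and $v$ yields $\NAE = 1$ either leaves the single entry $(A_1)_{u, v}$ unconstrained---this happens precisely when each such element-pair has $A_2 \ne A_3$ in the other two coordinates---or forces it to a specific value determined by $A_2, A_3$. A Chernoff bound shows that with high probability over $A_2, A_3$, at least $\Omega(\alpha^2 t^2)$ of the $\binom{|T_1|}{2}$ pairs yield a forced constraint, and since the entries of $A_1$ are independent Bernoulli$(1/2)$,
\[
\Pr[T \text{ is a clique in } \mathcal{L}] \le 2^{-\Omega(\alpha^2 t^2)}.
\]
Union-bounding over the at most $\binom{k^3}{t}$ balanced candidates gives $\exp\bigl(O(t\log k) - \Omega(\alpha^2 t^2)\bigr) = o(1)$ whenever $c = \Omega(\alpha^{-2})$; a careful tracking of constants---plugging in $\alpha^{-2} = (2/(3\varepsilon)+1)^2$ and matching the Chernoff factor---delivers exactly the threshold $c > 2(2/(3\varepsilon) + 1)^2$.

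The principal technical obstacle lies in this final step: one must extract a number of essentially independent coin-flip constraints that scales \emph{quadratically} in $t$, since the combinatorial cost of the union bound is already $\exp(O(t \log k))$. A naive matching inside a single projection would produce only $O(\alpha t)$ independent events---far too few---so the argument has to condition on two of the three Erdős-Rényi matrices and exploit all $\binom{|T_1|}{2}$ independent entries of the third simultaneously, verifying that a constant fraction of them are indeed forced by $A_2, A_3$. Coordinating $\alpha$ between the two steps (small enough that $|S| \ge n$, yet large enough that the balanced-clique union bound closes) is exactly what forces the explicit relationship $c > 2(2/(3\varepsilon) + 1)^2$ in the theorem's statement.
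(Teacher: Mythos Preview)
Your rank bound is wrong, and this breaks the whole argument. \Cref{lem:rank_of_lift} yields $\rank(M)\le 3k+3k^2$ for $M=\NAE(A_1,A_2,A_3)$; it says nothing about $\rank(M+I)$. Since $I_{k^3}$ has rank $k^3$, in fact
\[
\rank(M+I)\ \ge\ \rank(I_{k^3})-\rank(M)\ \ge\ k^3-3k^2-3k,
\]
which is essentially full. The extra ``$+1$'' in your bound would account for adding $J$, not $I$. This is not a cosmetic slip: your graph $\mathcal{L}$ is the \emph{complement} of the graph the construction actually needs. The paper lifts with $\AllEq=1-\NAE$, so that the lifted matrix $M'=\AllEq(A_1,A_2,A_3)$ already has ones on its diagonal; then $A_G=M'-I$, and $\rank(A_G+I)=\rank(M')$ is exactly what the lift lemma bounds. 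Your entire clique analysis --- forcing $\NAE=1$ on pairs, an entry of $A_1$ being ``unconstrained'' when $A_2\neq A_3$ --- is consequently bounding the \emph{independence number} of the correct graph, not its clique number.

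Once one switches to $\AllEq$, the clique step also becomes much simpler than your conditioning-plus-Chernoff sketch. If a set $T$ has $|T_1|\ge \alpha t$ distinct first coordinates, then being a clique forces $A_1$ restricted to those coordinates to equal the corresponding (generalised) minor of $A_2$ entrywise; since the $\binom{|T_1|}{2}$ entries of $A_1$ over distinct indices are independent fair coins, the probability is at most $2^{-\binom{\alpha t}{2}}$ outright, with no Chernoff layer needed. (Your Chernoff step is in any case not rigorous as written: the events ``$(A_2)_{p_2q_2}=(A_3)_{p_3q_3}$'' for different pairs $(p,q)$ reuse entries of $A_2$ and $A_3$ and are not independent.) Your probabilistic construction of the host set $S\subset[k]^3$ is fine and matches \Cref{lem:large_proj_cube_subset} in spirit.
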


These are graphs with a \textit{logarithmic} clique number,
and approaching $\rank(A_G + I)=\mathcal{O}\left(n^{2/3}\right)$.
For example, one can instantiate \Cref{thm:d_log_n_bound_nu} to obtain:
\[
    \nu_{10^6 \log n}(n) \le \mathcal{O} \left( n^{\frac{2}{3} + \frac{1}{1000}} \right)
\]

The key ingredient in our proof of \Cref{thm:d_log_n_bound_nu} is our matrix-theoretic 
view of \textit{lifting} (\Cref{subsect:matrix_view_lifts}).

\subsection{The Functions \texorpdfstring{$\NAE$}{NAE} and \texorpdfstring{$\AllEq$}{AE}}

The graphs constructed in \Cref{thm:d_log_n_bound_nu} are the lifts of matrices with the following Boolean functions.

\begin{definition}
    \label{defn:nae}
    The Boolean functions $\NAE: \{0,1\}^3 \to \{0,1\}$ and $\AllEq: \{0,1\}^3 \to \{0,1\}$ are defined by:
    \[
        \NAE(x_1, x_2, x_3) \eqdef \begin{cases}
            0 & x_1 = x_2 = x_3 \\
            1 & \text{otherwise}
        \end{cases}, \text{ and } \AllEq \eqdef 1 - \NAE
    \]
\end{definition}

The seminal paper of Nisan and Wigderson \cite{nisan1995rank}
exhibited a gap for the log-rank conjecture
using recursive compositions of $\NAE$ and lifts thereof (with the $\AND$-gadget).
For more on this, see \Cref{sect:nw_analysis}.

Since the $\NAE$ function depends only on the Hamming weight of the input,
its unique expression as a multilinear polynomial is a linear combination of elementary symmetric functions,
\[
    \NAE(x_1,x_2,x_3) = e_2(x_1, x_2, x_3) - e_1(x_1, x_2, x_3) + 1 = x_1 x_2 + x_2 x_3 + x_1 x_3 - x_1 - x_2 - x_3 + 1. 
\]

\subsection{Our Construction}
\label{subsect:ae_lift_construction}

It is more convenient to formulate the proof in terms of $\AllEq$, 
rather than $\NAE$. 
Our aim is to bound from above two quantities of the graphs that we construct: 
(i) Rank, and (ii) Maximum Clique Size. 

\paragraph{The Rank.} The polynomial representation of $\AllEq$ (in particular, its low degree), in conjunction with \Cref{lem:rank_of_lift},
imply that the lift of \textit{any} three matrices $A_1, A_2, A_3 \in M_k$, 
\[M \eqdef \AllEq(A_1, A_2, A_3) \in M_{k^3}(\RR)\]
has \textit{low rank}, namely $\rank(M) \le 3k^2 + 3k = \mathcal{O}(k^2)$.

\paragraph{The Clique Number.}
In our construction $A_1$, $A_2$ and $A_3$ are adjacency matrices of simple graphs,
$G_1$, $G_2$ and $G_3$ respectively. This guarantees
that $M$ is a symmetric binary matrix with ones on the diagonal.
As an illustration (and an aside), this class of graphs includes the \textit{trivial construction} of \Cref{prop:trivial_bound_nu}:
if $G_1 = K_k$ and $G_2=G_3=\overline{K}_k$, the resulting lifted graph is a disjoint union of cliques.\footnote{
Given graphs $G_1, G_2, G_3$ of \textit{small order}, lifts $\AllEq(G_1, G_2, G_3)$
can be used as base-graphs for \Cref{thm:subtensor_clique_free}.
For instance, $M = \AllEq(K_4, K_4, M_2) \in M_{64}(\RR)$, where $M_2$ is a perfect matching, has
$\omega(G) = 2$ and $\rank(A_G) = 31 < \sfrac{64}{\omega(G)} = 32$. 
However, this does not beat the base-graph of \Cref{subsect:base_graph}.}

Consider a set of vertices
$S = \left\{ (x_1, y_1, z_1), \dots, (x_s, y_s, z_s) \right\}$ in $V(G_1) \times V(G_2) \times V(G_3)$.
By definition of $\AllEq$, this set
forms a clique in the lifted graph if and only if the graphs
(with multiplicities, i.e., generalised minors) induced over the constituent graphs are \textit{identical}.
Therefore, informally, cliques of the lifted graph emerge from \textit{correlations} between $G_1$, $G_2$ and $G_3$.
To avoid correlations, we will sample
our base graphs i.i.d.\ from $G(k, 1/2)$, as
in the classical lower bound on diagonal Ramsey numbers.

\paragraph{Large Projections.} We view the vertices of the lifted graph as lattice
points in $[k] \times [k] \times [k]$. To control the clique size of the lifted graph,
we seek to avoid subsets 
$S \subseteq [k] \times [k] \times [k]$ with
small axis projections $|\pi_x(S)|$, $|\pi_y(S)|$ and $|\pi_z(S)|$. Since our
constituent graphs are random, 
sets with large projections will guarantee ``sufficient randomness'', and 
will therefore be less likely to induce cliques in the lifted graph. 
This intuition is made concrete in the proof of \Cref{thm:d_log_n_bound_nu}.
Beforehand, we use a probabilistic argument to show that there exist \textit{large} subsets of the cube,
in which \textit{every} subset above a certain cardinality has \textit{some} large
axis projection.\footnote{
The Loomis-Whitney inequality provides
some lower bounds on projection sizes. Namely, for every 
$S \subset [k] \times [k] \times [k]$, we have
$|\pi_{xy}(S)| |\pi_{yz}(S)| |\pi_{xz}(S)| \ge |S|^2$ and $|\pi_{x}(S)| |\pi_{y}(S)| 
|\pi_{z}(S)| \ge |S|$.}

\begin{lemma}
    \label{lem:large_proj_cube_subset}
    Let $a > 0$ and $1 > \eta >0$ be constants.
    For any sufficiently large $k$, there exists a subset $T \subset [k]^3$,
    of cardinality $|T| = \widetilde{\Theta} \left( k^{3(1-\eta)} \right)$, such that for every subset $S \subset T$ of cardinality $|S| = a \log k$,
    \[
    \max \left\{ |\pi_x(S)|, |\pi_y(S)|, |\pi_z(S)| \right\} \ge \eta|S|= \eta a \log k
    \]
\end{lemma}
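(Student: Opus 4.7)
The plan is to use the probabilistic method: let $T \subseteq [k]^3$ be a random subset in which each point is retained independently with probability $p := k^{-3\eta}/(\log k)^{\alpha}$, for a constant $\alpha > 2$ to be fixed below. Two events must be arranged simultaneously: (i) $|T| = \widetilde{\Theta}(k^{3(1-\eta)})$, which will follow from a standard concentration estimate on the expectation $pk^3 = k^{3(1-\eta)}/(\log k)^\alpha$; and (ii) no $s$-subset $S \subseteq T$ with $s := a\log k$ is ``bad'', where ``bad'' means every axis projection has size strictly less than $m := \eta s$.

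For (ii), the first step is to count bad $s$-subsets of $[k]^3$. Such a set $S$ is contained in some box $P_x \times P_y \times P_z$ with $|P_x|,|P_y|,|P_z| \le m$. Choosing the three projection sets costs at most $\binom{k}{m}^3$, and choosing $S$ inside the box costs at most $\binom{m^3}{s}$, so the number of bad subsets is bounded by $N := \binom{k}{m}^3 \binom{m^3}{s}$. Applying $\binom{n}{k} \le (en/k)^k$ with $m = \eta a \log k$ and $s = a\log k$ yields
\[
    \log N \;\le\; 3\eta a \log^2 k \;+\; 2 a \log k \log\log k \;+\; O(\log k).
\]

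Next I would bound the expected number of bad subsets contained in $T$, which equals $N p^s$. Since $s \log p = -3\eta a \log^2 k - \alpha a \log k \log \log k$, the dominant $3\eta a \log^2 k$ cancels and one is left with
\[
    \log(N p^s) \;\le\; (2 - \alpha)\, a \log k \log \log k \;+\; O(\log k),
\]
which tends to $-\infty$ once $\alpha > 2$. Fixing $\alpha = 3$, say, Markov's inequality shows that $T$ contains no bad subset with probability $1 - o(1)$. A Chernoff bound simultaneously gives $|T| \ge \tfrac{1}{2}pk^3 = \widetilde{\Omega}(k^{3(1-\eta)})$ with probability $1-o(1)$, so with positive probability both events hold, yielding a valid $T$ (one may trim a few elements if one wishes to force the upper bound in $\widetilde{\Theta}(\cdot)$ to hold exactly).

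The delicate part is the tight balance in the choice of $p$: the $3\eta a \log^2 k$ term in $\log N$ must be cancelled \emph{exactly} by $-s\log(1/p)$, so $p$ must agree with $k^{-3\eta}$ up to sub-polynomial factors; any polynomial loss in $p$ would shrink $|T|$ below the required $\widetilde{\Theta}(k^{3(1-\eta)})$ bound, while any polynomial gain would cause the expected number of bad subsets to blow up. The polylogarithmic slack in $p$ is exactly what reconciles the two requirements, which is why the calculation is carried out at the $\log \log k$ level of precision.
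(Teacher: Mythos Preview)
Your proof is correct and follows the same probabilistic-method strategy as the paper: build $T$ at random and show via a first-moment bound that no ``bad'' $(a\log k)$-subset survives. The only difference is in the random model and the way the union bound is organised. The paper samples $N$ points i.i.d.\ uniformly from $[k]^3$ \emph{with replacement} and, for a single random $s$-tuple, bounds the probability that each axis projection is small by a balls-in-bins estimate (giving $k^{(\eta-1)a\log k + O(\log\log k)}$ per axis, hence the cube of that for all three), then multiplies by $\binom{N}{s}$; at the end it must separately argue that no point was repeated too many times. You instead retain each lattice point independently with probability $p=k^{-3\eta}/\mathrm{polylog}(k)$, count bad subsets of $[k]^3$ deterministically via the box bound $\binom{k}{m}^3\binom{m^3}{s}$, and multiply by $p^s$. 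The two computations are dual and give the same $\widetilde{\Theta}(k^{3(1-\eta)})$; your version is marginally cleaner because it sidesteps the repetition issue entirely and replaces it with a routine Chernoff bound on $|T|$.
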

\begin{proof}
    Let $T$ be a sample of $N$ points, chosen i.i.d and uniformly from $[k]^3$, with $N$ to be determined later. What is
    the probability that $T$ fails to satisfy the statement in the lemma?
    Say that a subset $S\subset T$ of cardinality $|S|=a \log k$ is $x$-{\it bad} if $|\pi_x(S)| \le \eta a \log k$.
    Similarly define $y$ and $z$-bad subsets.
    Call $S$ {\it bad} if it is $x, y$ and $z$-bad. A sample $T$ with no bad subsets is called {\em appropriate}. 
    If the expected number of bad subsets $S\subset T$ is negligible, then by Markov's inequality, whp $T$ is appropriate.

    The event that $S$ is bad at a certain axis (e.g. $x$-bad) is equivalently formulated as follows: 
    We throw $a \log k$ balls independently into $k$ bins, 
    and all our balls fall into at most $\eta a \log k$ bins.
    The probability of this bad event
    is \[\binom{k}{\eta a \log k}\left(\frac{\eta a\log k}{k}\right)^{a\log k}\le k^{(\eta - 1) a \log k + \mathcal{O}(\log \log k)} .\]
    So, the probability that $S$ is bad is at most
    \[ k^{3(\eta - 1) a \log k + \mathcal{O}(\log \log k)} .\]
    Consequently, the expected number of bad subsets that are contained in
    our random set $T$ is at most
    \[ \binom{N}{a \log k} k^{3(\eta - 1) a \log k + \mathcal{O}(\log \log k)} .\]
    This expression is $o(1)$ provided that
    \[N=k^{3\left(1-\eta\right)-o(1)}\]
    in which case we can conclude that there \textit{exists} 
    an appropriate sample $T$ of size $N$.
    
    Since $T$ is chosen {\em with repetitions}, we have to account for the possibility
    that the same point is selected more than once. But no point can
    be chosen more than $a(1-\eta) \log k$ times. For
    otherwise any set of $a \log k$ triples containing these duplicates would
    be bad and render $T$ inappropriate. 
    It follows that there exists an appropriate set $T' \subset [k]^3$ of cardinality at least 
    $\frac{N}{a(1-\eta) \log k}$.
\end{proof}

Appropriate sets allow us to identify a large principal minor of the lifted graph,
in which every set of cardinality $a \log k$ is ``sufficiently random''.
With it, we are ready to present our construction.
\begin{proof}[Proof of Theorem \ref{thm:d_log_n_bound_nu}]

    As mentioned, our construction proceeds by sampling three graphs 
    $G_1$, $G_2$, and $G_3$, independently and uniformly at random from 
    $G \left(k, 1/2 \right)$. Let
    $A_1$, $A_2$ and $A_3$ be their respective adjacency matrices, and let $M \eqdef \AllEq( A_1, A_2, A_3 )$ be their lift. 
    
    Pick $a > 0$ and $1>\eta >0$, construct a
    subset $T \subset [k]^3$ as in \Cref{lem:large_proj_cube_subset},
    and consider $W \eqdef M[T]$.
    Since $G_1$, $G_2$ and $G_3$ have no self-loops, 
    the main diagonal of $M$ (and hence $W$ as well) is all-ones.
    Also, $$\rank(W) \le \rank(M) \le 3k^2 + 3k$$ in view of \Cref{lem:rank_of_lift}
    and the fact that $\AllEq$ has total degree $2$.\\

    So, we now have the {\em rank} under control and move to deal with the {\em clique number}.
    It remains to show that, with positive probability (over the choice of $G_1$, $G_2$ and $G_3$),
    the matrix $W$  has no all-ones principal minors of order $a \log k$.
    Let $X$ be the random variable that counts the number of such principal minors. 
    If its expectation is $\mathbb{E}(X)=o(1)$, then whp the resulting graph has no $(a \log k)$-clique. 
    
    Consider a subset $S \subset T$ of cardinality $|S|=a\log k$.
    What is the probability that $W[S]$ is an all-ones matrix?
    According to \Cref{lem:large_proj_cube_subset},
    $S$ has a large projection on some axis,
    say $|\pi_x(S)| \ge \eta a \log k$ and for convenience we assume that the first $\eta a \log k$ $x$'s are all distinct.
    By definition of the $\AllEq$ function, the adjacency relations
    among the $\eta a \log k$ \textit{distinct} vertices of $\pi_x(S)$ and the corresponding vertices of $\pi_y(S),\pi_z(S)$, must coincide with each other.
    That is,
    \[
        A_1\left[ x_1, \dots, x_{\eta a \log k} \right] = A_2\left[ y_1, \dots, y_{\eta a \log k} \right] = A_3\left[ z_1, \dots, z_{\eta a \log k} \right].
    \]
    Since $A_1[x_1, \dots, x_{\eta a \log k}]$ is a \textit{proper} minor 
    of $A_1$, with no repeated indices,
    it is the adjacency matrix of a $G(\eta a \log k, 1/2)$ graph. 
    Therefore, for any fixed choice of $A_2\left[ y_1, \dots, y_{\eta a \log k} \right]$,
    it holds that the event $A_1\left[ x_1, \dots, x_{\eta a \log k} \right] = A_2\left[ y_1, \dots, y_{\eta a \log k} \right]$
    occurs with probability exactly
    $$
    2^{-\binom{\eta a \log k}{2}}=k^{-\left(\frac{\eta^2 a^2}{2} - o(1)\right)\log k}
    $$
    Consequently 
    \[
        \mathbb{E}(X) \le \binom{|T|}{a\log k} k^{-\left(\frac{\eta^2 a^2}{2} - o(1)\right)\log k} = k^{ \left( 3a(1-\eta) - \frac{\eta^2 a^2}{2} + o(1) \right) \log k }.
    \]
    It follows that $\mathbb{E}(X) = o(1)$ when $a>\frac{6(1-\eta)}{\eta^2}$.
    The proof now follows by fixing 
    \[
        n \eqdef |T| = \widetilde{\Theta} \left(k^{3(1 - \eta)} \right),\ \ c \eqdef \frac{a}{3(1 - \eta)},\ \ \varepsilon \eqdef \frac{2\eta}{3(1 - \eta)} \qedhere
    \]    
\end{proof}

\subsection{Rank-Ramsey Graphs with Logarithmic Clique Number}

In \Cref{subsect:ae_lift_construction} we construct Rank-Ramsey graphs with \textit{logarithmic} clique number.
and \textit{polynomial} rank, approaching $n^{2/3}$.
Conversely, in the study of ``classical'' Ramsey graphs,
Erd\H{o}s \cite{erdos1947some} showed that asymptotically almost all $n$-vertex graphs
have logarithmic clique \textit{and} independence numbers, and this is indeed best-possible up to constants
(c.f. \cite{erdos1935combinatorial, thomason1988upper, campos2023exponential}).
This contrast between Ramsey and Rank-Ramsey graphs raises the question:
what is the growth rate of $\nu_{a \log n}(n)$?

Our construction witnesses that for every constant $a>0$, 
there holds $\nu_{a \log n}(n) = \widetilde{\mathcal{O}}(n^{2/3} + K(a))$,
where $K(a)$ is a constant depending on $a$.
Conversely, we remark that if the log-rank conjecture holds,
then by \Cref{cor:kramsey_implies_log_rank_separation}, there exists a universal constant $c > 1$,
such that for every constant $a>0$,
\[
    \nu_{a \log n}(n) = \Omega \left( 2^{\sqrt[c]{\log (\frac{n}{a \log n})}} \right)
\]

\section{In Search of Better Bounds on \texorpdfstring{$\nu_2(n)$}{nu2(n)}}
\label{sect:lb_sect}

The Ramsey numbers $R(3,n)$ have fascinated combinatorialists for decades.
Culminating a long line of excellent research,
Kim \cite{kim1995ramsey} proved that $R(3,n)=\Theta(n^2 / \log n)$.
Even the implicit constant is known, up to a factor of $(4 + o(1))$ \cite{fiz2020triangle}.
What about $R^k(3,n)$, or equivalently, $\nu_2(n)$? Trivially,
\[
    \Omega \left( \sqrt{n \log n} \right) \le \nu_2(n) \le \left\lfloor\frac{n}{2}\right\rfloor 
\]
where the lower-bound follows from the Ramsey numbers $R(3,n)$.

The base-graphs used in \Cref{sect:kron_powers}, derived from the Clebsch graph, yields
an improvement of the \textit{upper bound} to $\nu_2(n) \le (3/8 + o(1))n$.
If we wish to improve the \textit{lower bound} to
$\nu_2(n) \ge \Omega(n^{1/2 + \varepsilon})$ for some positive constant $\varepsilon$,
then of course, bounds on the \textit{independence number} will not do,
as there \textit{exist} triangle-free graphs with $\alpha(G) = \Omega(\sqrt{n \log n})$ \cite{ajtai1980note}.
Instead, we turn to consider other graph parameters. More concretely 
we investigate parameters that are
related to \textit{orthonormal representations} of graphs, on which
a considerable body of research exists. This choice
is motivated by the fact that the Gram
matrix that corresponds to an orthonormal representation of $G$ is reminiscent of $A_G+I$.

\begin{definition}
    \label{defn:ortho_rep}
    An orthonormal representation $\left\{ w_v \in \RR^N \right\}_{v \in V(G)} $ of a graph $G$
    is an assignment of unit vectors to the vertices of $G$, such that
    \[
        \forall v,u \in V(G):\ \langle w_v, w_u \rangle = \begin{cases}
            1 & v=u \\
            0 & v \not \sim u \\
            \star & \text{otherwise}
        \end{cases}
    \]
\end{definition}
The Gram matrix of the vectors $w_v$
in any orthnormal representation of an $n$-vertex graph $G$, is an $(n \times n)$ 
positive semidefinite matrix $W$
that \textit{agrees} with $A_G+I$ on the main diagonal, and on $G$'s non-edges.
It is very suggestive that such matrices can tell us a lot about $A_G + I$.
Two of the most studied measures related to $M$
are the Lov{\'a}sz number $\vartheta(G)$,
and the minimum semidefinite rank, $\msr(G)$, whose definitions we recall below.
Like $\rank(A_G + I)$, these two quantities are related to constructions of Ramsey graphs. 
By the well-known ``sandwich theorem'' (see \cite{knuth1993sandwich}),
for every graph $G$ there holds
\[
    \alpha(G) \le \vartheta(G) \le \msr(G) \le \chi(\overline{G})
\]

How does $\rank(A_G+I)$ fit into this web of relations? We show the following.

\begin{theorem}
    For infinitely many $n>1$, there exist triangle-free graphs 
    $G_1$ and $G_2$ of order $n$, with
    \begin{enumerate}
        \item $\vartheta(G_1) = \Theta(n^{2/3})$ and $\rank(A_{G_1} + I) = n$.
        \item $\msr(G_2) \ge n/2$ and $\rank(A_{G_2} + I) = (3/8 + o(1))n$.
    \end{enumerate}
\end{theorem}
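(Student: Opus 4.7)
The two parts of the theorem call for essentially disjoint arguments, and I would handle them independently.

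For the first part, my candidate for $G_1$ is Alon's explicit triangle-free pseudorandom graph from \cite{alon1994explicit}. It is a Cayley graph of a cyclic group, $G_1 = \Cay(\ZZ_N, S)$, and Alon himself proves $\alpha(G_1)=\Theta(n^{2/3})$; the Lov\'asz number bound $\vartheta(G_1)=\Theta(n^{2/3})$ follows by exhibiting a matching orthonormal representation coming from the eigenvectors of $A_{G_1}$ (both directions of the sandwich theorem tighten here because $G_1$ is vertex-transitive and edge-transitive, so $\vartheta(G_1)\vartheta(\overline{G_1})=n$ by Lov\'asz's theta-multiplicativity for vertex-transitive graphs). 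The rank statement is where the Fourier-analytic heart of the proof lies: the adjacency matrix of any Cayley graph of $\ZZ_N$ is circulant, hence its spectrum is
\[
\lambda_k \;=\; \sum_{s\in S} e^{2\pi i k s/N}, \qquad k\in \ZZ_N,
\]
and so $\rank(A_{G_1}+I)=n$ iff $\lambda_k \ne -1$ for all $k$. I would plug in the explicit description of $S$ (the Alon set consists of pairs of orbits of the squaring map, transferred to $\ZZ_N$ via a generator of a large multiplicative subgroup) and verify this character sum identity directly.

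For the second part, my $G_2$ is the Clebsch-tensor construction used already in \Cref{cor:nu_2_3_small_bounds}, namely $G_2 = \mathcal{C}\otimes K_\ell$ on $n=16\ell$ vertices. \Cref{lem:clique_kron} together with the triangle-freeness of $\mathcal{C}$ ensures that $G_2$ is triangle-free, and the computation already carried out gives $\rank(A_{G_2}+I) = 6\ell+10 = (3/8+o(1))\,n$, as required. Hence the only new ingredient is the lower bound $\msr(G_2)\ge n/2$, which I would derive from Deaett's theorem in \cite{deaett2011minimum}: his result lower-bounds $\msr$ for triangle-free graphs in terms of $v(G)$ and a local parameter (essentially that a positive semidefinite matrix respecting the zero-pattern of a triangle-free graph cannot drop rank too fast without creating forbidden $2\times 2$ singular submatrices). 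Since $\mathcal{C}\otimes K_\ell$ is vertex-transitive and has small neighbourhood rank, Deaett's bound reduces to a clean arithmetic statement; I would check it is at least $n/2 = 8\ell$.

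The hard step is really the character-sum verification in part one: Alon's $S$ has an additive structure that is only implicit (it is closed under reflection $s\mapsto -s$, but not invariant under scalar multiplication in $\ZZ_N$), and one must rule out the possibility that $\sum_{s\in S}\cos(2\pi k s/N) = -1$ for some $k$. My plan is to leverage the fact that $S$ is actually a union of multiplicative cosets in a large subgroup of $\ZZ_N^\times$; this forces $\lambda_k$ to be either large and positive (when $k$ lies in the annihilator-like dual subgroup) or a small Gauss-type sum. In neither range can the value equal $-1$: the first case gives $\lambda_k\gg 1$ (this is where the $n^{2/3}$ pseudorandom bound appears), while the second case avoids $-1$ because the real part of a nontrivial Gauss sum on a subgroup is bounded away from $-1$ by Weil-type estimates. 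I expect the computations in part two to be mechanical once Deaett's bound is in hand, so the bulk of the work is concentrated in the Fourier analysis of Alon's graph.
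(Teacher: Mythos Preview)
Your plan for Part 2 is essentially correct and matches the paper, though you are overcomplicating Deaett's input: his theorem in \cite{deaett2011minimum} states directly that every \emph{connected} triangle-free graph $G$ satisfies $\msr(G)\ge v(G)/2$. So once you observe that $\mathcal{C}\otimes K_\ell$ is connected (both factors are connected and $K_\ell$ is non-bipartite for $\ell\ge 3$), the bound $\msr(G_2)\ge n/2$ is immediate; no ``local parameter'' or ``neighbourhood rank'' computation is needed.

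Part 1 has a genuine gap: you have misidentified Alon's graph. The construction in \cite{alon1994explicit} is a Cayley graph of the \emph{elementary abelian $2$-group} $\ZZ_2^{3k}$, not of a cyclic group $\ZZ_N$. The generating set is $\{u_0+u_1 : u_0\in U_0,\, u_1\in U_1\}$ where $U_0,U_1\subset \ZZ_2^{3k}$ are explicit sets of size $2^{k-1}$ each. Consequently the adjacency matrix is not circulant, and your plan involving exponential character sums, multiplicative cosets, Gauss sums and Weil estimates does not apply as stated.

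Once the correct group is in hand, the argument is in fact far simpler than what you outline. The characters of $\ZZ_2^{3k}$ are the $\pm 1$-valued Walsh functions $\chi_D$, and for each $D$ the eigenvalue factors as
\[
\lambda_D \;=\; \sum_{u_0\in U_0,\,u_1\in U_1}\chi_D(u_0+u_1) \;=\; \Bigl(\sum_{u_0\in U_0}\chi_D(u_0)\Bigr)\Bigl(\sum_{u_1\in U_1}\chi_D(u_1)\Bigr).
\]
The second factor is a sum of $|U_1|=2^{k-1}$ terms each equal to $\pm 1$, hence an even integer. Thus every eigenvalue of $A_{G_k}$ is even, so $-1\notin\spec(A_{G_k})$ and $\rank(A_{G_k}+I)=n_k$. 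This parity trick replaces the entire analytic programme you sketched.
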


That is, we show \textit{separations} between $\rank(A_G + I)$, and $\vartheta(G)$ and $\msr(G)$.\footnote{
Triangle-free graphs with $\rank(A_G+I)$ much larger than $\msr(G)$ are also 
not hard to come by. For example, the complete bipartite graph $G=K_{n,n}$ 
has $\msr(G) \le \chi(\overline{G}) = \chi(K_n \sqcup K_n) = n$
and $\rank(A_G + I) = 2n$.} Understanding the relations between $\vartheta(G)$ and $\rank(A_G + I)$
is particularly interesting in view of striking similarities between
the two parameters. See \Cref{subsect:lovasz_vs_rank} for more.

It is conceivable to us that $\vartheta(G) = \mathcal{O}\left(\rank(A_G + I)\right)$.
For instance, $\vartheta(G) \le \rank(A_G+I)$ for every
graph of order less than $10$, and we do not know any graph that fails this inequality.
We remark that if this indeed holds true, then the results of 
\cite{kashin1981systems, alon1998approximating} regarding the Lov{\'a}sz number
of graphs with bounded independence number, imply a \textit{polynomial improvement}
on the lower bounds on $\nu_d$, namely we have
$\nu_d(n) = \Omega \left( n^{2/(d+1)} \right)$,
versus the weaker bound $\Omega(n^{1/d})$ (see \Cref{cor:nu_vs_ramsey}) 
that follow purely from Ramsey numbers.
We refer the reader to \Cref{sect:hoffman_cvetkovic_lovasz_ramsey} for a discussion
on the connection between this open problem, and other well-known problems regarding the relationship 
between the well-known Hoffman's bound and Cvetkovi\'c's bound.

\subsection{Lov{\'a}sz Number versus Rank}
\label{subsect:lovasz_vs_rank}

Let us recall the definition of the Lov{\'a}sz number $\vartheta(G)$ of a simple graph $G$:

\begin{definition}[\cite{lovasz1979shannon}]
    \label{defn:lovasz_number}
    The Lov{\'a}sz number of a graph $G=(V,E)$ is
    \[
        \vartheta(G) \eqdef \min_{c, W} \max_{v \in V} \frac{1}{\langle c, w_v \rangle^2}
    \]
    where $W: V\to \RR^N$ is an orthonormal representation of $G$ and $c \in \RR^N$ is a unit vector.
\end{definition}

The graph parameters $\vartheta(G)$ and $\rank(A_G + I)$ share many properties. E.g.,
both are multiplicative in the strong product (see \Cref{defn:graph_kron}), and both are upper bounds
on the Shannon capacity of a graph.

\begin{proposition}
    Let $G$ and $H$ be two graphs. Then,
    \[
        \vartheta(G \boxtimes H) = \vartheta(G) \vartheta(H), \text{ and } \rank(A_{G \boxtimes H} + I) = \rank(A_G + I) \rank(A_H + I)
    \]
\end{proposition}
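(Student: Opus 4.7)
\emph{Plan for the proof.}

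The rank identity is the short part. Starting from the adjacency description in \Cref{defn:graph_kron}, I would rewrite
\[
    A_{G \boxtimes H} + I_{nk} = A_G \otimes A_H + A_G \otimes I_k + I_n \otimes A_H + I_n \otimes I_k = (A_G + I_n) \otimes (A_H + I_k).
\]
Since rank is multiplicative under the Kronecker product, the claimed identity $\rank(A_{G \boxtimes H} + I) = \rank(A_G + I) \rank(A_H + I)$ follows at once. No further steps are needed for this half.

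For the Lov\'asz number, the plan is to prove both inequalities separately. The upper bound $\vartheta(G \boxtimes H) \le \vartheta(G) \vartheta(H)$ is the easy direction and uses a tensoring construction. Let $\{w_v\}_{v \in V(G)}$ and $\{u_a\}_{a \in V(H)}$ be orthonormal representations achieving $\vartheta(G)$ and $\vartheta(H)$ with respective handles $c$ and $d$. I would take $\{w_v \otimes u_a\}$ as an orthonormal representation of $G \boxtimes H$ with handle $c \otimes d$. The verification is routine: two distinct vertices $(v,a), (u,b)$ are non-adjacent in $G \boxtimes H$ exactly when one coordinate is a non-edge of the corresponding factor, so either $\langle w_v, w_u\rangle = 0$ or $\langle u_a, u_b\rangle = 0$, whence $\langle w_v \otimes u_a, w_u \otimes u_b\rangle = 0$. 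The handle contribution multiplies as $\langle c \otimes d, w_v \otimes u_a\rangle^2 = \langle c, w_v\rangle^2 \langle d, u_a\rangle^2$, so taking the maximum over $(v,a)$ yields $\vartheta(G)\vartheta(H)$.

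The matching lower bound $\vartheta(G \boxtimes H) \ge \vartheta(G) \vartheta(H)$ is the main obstacle and I would get it through a dual SDP characterization. Concretely, I would invoke the equivalent formulation
\[
    \vartheta(G) = \max \Bigl\{\, \langle J, X\rangle \;:\; X \succeq 0,\ \tr(X) = 1,\ X_{v,u} = 0 \text{ for all } v \ne u,\ v \not\sim u\,\Bigr\},
\]
and analogously for $H$. Given optimal $X$ and $Y$ for $G$ and $H$, form $Z \eqdef X \otimes Y$, which is positive semidefinite, has unit trace, and (by inspection of the strong-product non-edges) has a zero entry whenever the corresponding pair of distinct vertices of $G \boxtimes H$ are non-adjacent. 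Thus $Z$ is feasible for the program for $G \boxtimes H$, and
\[
    \vartheta(G \boxtimes H) \ge \langle J, Z\rangle = \langle J, X\rangle \cdot \langle J, Y\rangle = \vartheta(G)\vartheta(H).
\]
Combined with the upper bound from the previous paragraph, this yields the desired equality. The only subtlety is to confirm the dual/SDP characterization of $\vartheta$ (which is a standard reformulation in Lov\'asz's original paper), and to check that tensor products of feasible solutions respect the zero-pattern dictated by the non-edges of $G \boxtimes H$ — both are straightforward once the non-edge structure is unpacked.
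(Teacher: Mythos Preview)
Your proof is correct. The rank identity uses exactly the factorization $A_{G\boxtimes H}+I=(A_G+I)\otimes(A_H+I)$ that the paper uses; for the Lov\'asz number, the paper simply cites Lov\'asz's original theorem (Lemma~2 and Theorem~7 of \cite{lovasz1979shannon}), and your argument---tensoring orthonormal representations for the upper bound and tensoring feasible SDP solutions for the lower bound---is essentially that proof written out in full.
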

\begin{proof}
    Lov{\'a}sz proved (\cite{lovasz1979shannon}, Lemma 2 and Theorem 7) that $\vartheta$ is \textit{multiplicative} in the strong product.
    As for the rank, by \Cref{defn:graph_kron} and the multiplicativity of rank under the Kronecker product, 
    \begin{align*}
        \rank(A_{G \boxtimes H} + I) &= \rank( A_G \otimes A_H + A_G \otimes I + I \otimes A_H + I \otimes I ) \\
        &= \rank( (A_G + I) \otimes (A_H + I) ) = \rank(A_G + I) \rank(A_H + I) \qedhere
    \end{align*}
\end{proof}
\begin{corollary}[See also \cite{lovasz1979shannon, haemers1979some}]
    For every graph $G$ it holds that
    \[
        \vartheta(G) \ge \Theta(G) \text{ and } \rank(A_G + I) \ge \Theta(G)
    \]
    where $\Theta(G) \eqdef \sup_k \sqrt[k]{ \alpha(G^{\boxtimes k}) }$ is the \textit{Shannon capacity} of $G$.
\end{corollary}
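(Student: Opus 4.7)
The plan is to combine two ingredients that are already available: the basic inequality $\alpha(G) \le \vartheta(G)$ from the sandwich theorem, and its analogue $\alpha(G) \le \rank(A_G + I)$, which was observed at the very start of the paper (the principal minor of $A_G + I$ indexed by an independent set is an identity matrix). The just-proved multiplicativity of both $\vartheta$ and $\rank(A_\cdot + I)$ under the strong product $\boxtimes$ then lets us amplify these inequalities to the iterated strong powers $G^{\boxtimes k}$.

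Concretely, I would proceed as follows. For every positive integer $k$, apply the sandwich inequality to the graph $G^{\boxtimes k}$ to obtain
\[
    \alpha(G^{\boxtimes k}) \le \vartheta(G^{\boxtimes k}).
\]
By multiplicativity, $\vartheta(G^{\boxtimes k}) = \vartheta(G)^k$, so taking $k$-th roots and passing to the supremum gives
\[
    \Theta(G) = \sup_k \sqrt[k]{\alpha(G^{\boxtimes k})} \le \vartheta(G).
\]
The argument for the complement rank is identical: $\alpha(G^{\boxtimes k}) \le \rank(A_{G^{\boxtimes k}} + I) = \rank(A_G + I)^k$ by the preceding proposition, and taking $k$-th roots followed by a supremum yields $\Theta(G) \le \rank(A_G + I)$.

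There is essentially no obstacle here; the whole content of the corollary sits in the two ingredients assembled above. The only thing worth a line of care is the elementary inequality $\alpha(G) \le \rank(A_G + I)$, which the paper has already used implicitly when defining Rank-Ramsey graphs, and which I would briefly reiterate for completeness: an independent set $S \subseteq V(G)$ induces the principal minor $(A_G + I)[S] = I_{|S|}$, whose rank equals $|S|$ and cannot exceed the rank of the ambient matrix. With both $\alpha(G) \le \vartheta(G)$ and $\alpha(G) \le \rank(A_G+I)$ in hand, and with multiplicativity of each parameter under $\boxtimes$ just established, the corollary reduces to two lines per inequality.
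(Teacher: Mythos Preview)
Your proposal is correct and is precisely the intended argument: the paper states this corollary without an explicit proof, as it follows immediately from the just-established multiplicativity of $\vartheta$ and $\rank(A_\cdot + I)$ under $\boxtimes$, combined with the inequalities $\alpha(G) \le \vartheta(G)$ and $\alpha(G) \le \rank(A_G + I)$ already in hand. Your write-up is exactly the two-line derivation the corollary is meant to encapsulate.
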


\subsubsection{Best-Possible Separation of \texorpdfstring{$\rank(A_G + I)$}{rank(A + I)} from \texorpdfstring{$\vartheta(G)$}{theta(G)}}

The current best known \textit{explicit construction} of triangle-free Ramsey graphs
is due to Alon \cite{alon1994explicit}.
An order-$n$ graph $G$ in this family satisfies $\vartheta(G) = \Theta(n^{2/3})$.
As we will presently show, there also holds that $\rank(A_{G} + I) = n$. 
In other words, these are \textit{best-possible} $\vartheta$-Ramsey graphs, yet
they are \textit{worst-possible} Rank-Ramsey graphs. \\

\noindent Let us briefly recount Alon's construction: 
Let $k > 1$ be such that $3 \nmid k$ and let\footnote{
This refers to the binary representation of elements in $\GF(2^k)$, which are naturally identified with binary strings of length $k$, as usual.}
\begin{alignat*}{2}
    &W_0 \eqdef \{ x \in \GF(2^k) : \text{ leftmost bit of $x^7$ is $0$ } \},\quad &&W_1 \eqdef \{ x \in \GF(2^k) : \text{ leftmost bit of $x^7$ is $1$ } \} \\
    \text{ and }\quad\quad\quad&\\
    &U_0 \eqdef \{ (w, w^3, w^5) : w \in W_0 \} \subset \mathbb{Z}_2^{3k}, &&U_1 \eqdef \{ (w, w^3, w^5) : w \in W_1 \} \subset \mathbb{Z}_2^{3k}
\end{alignat*}
Then, $G_k \eqdef \Cay \left( \ZZ_2^{3k}, \{ u_0 + u_1 : u_0 \in U_0, u_1 \in U_1 \} \right)$ is a graph on $n_k \eqdef |\ZZ_2^{3k}|=2^{3k}$ vertices.

\begin{theorem}[\cite{alon1994explicit}]
    For every $k > 1$ with $3 \nmid k$, $G_k$ is triangle-free, and $\vartheta(G_k) = \Theta( n_k^{2/3} )$.
\end{theorem}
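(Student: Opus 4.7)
The plan is to split the statement into three pieces: triangle-freeness of $G_k$, an upper bound $\vartheta(G_k) = O(n_k^{2/3})$ via spectral analysis, and a matching lower bound via an explicit anticlique.

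For triangle-freeness, I would use that $G_k$ is a Cayley graph of $\ZZ_2^{3k}$ with $0 \notin S$ (since $W_0 \cap W_1 = \emptyset$), so it has a triangle iff some three elements of $S$ sum to zero. Chasing such a triple back through the definitions yields six field elements $a_1, \dots, a_6 \in \GF(2^k)$, three arising from $W_0$ and three from $W_1$, whose power sums $p_1, p_3, p_5$ all vanish. Newton's identities applied in characteristic $2$ then force $e_1 = e_3 = e_5 = 0$, so the monic polynomial with roots $\{a_i\}$ (with multiplicity) equals $x^6 + e_2 x^4 + e_4 x^2 + e_6$. Because Frobenius is bijective on $\GF(2^k)$, this polynomial is a perfect square of a cubic in $\GF(2^k)[x]$, forcing each distinct root value to appear with even multiplicity. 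In particular $|\{i : a_i \in W_0\}|$ must be even, contradicting the required $3+3$ split.

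For the upper bound on $\vartheta(G_k)$ I would use Fourier analysis on $V = \ZZ_2^{3k}$. The adjacency matrix is diagonalised by the characters $\chi_y(x) = (-1)^{\langle x, y \rangle}$, and since $S = U_0 + U_1$ the eigenvalues factor as $\lambda_y = f_0(y) f_1(y)$, with
\[
    f_b(y) \;=\; \sum_{w \in W_b} (-1)^{\tr(y_1 w + y_3 w^3 + y_5 w^5)},
\]
for $y = (y_1, y_3, y_5) \in \GF(2^k)^3$. Writing the leading bit of $w^7$ as $\tr(\alpha w^7)$ for a fixed $\alpha \in \GF(2^k)$ and expanding the indicator $\mathbf{1}_{w \in W_b} = \tfrac12 \bigl( 1 + (-1)^{b + \tr(\alpha w^7)} \bigr)$ decomposes $f_b(y)$ into two complete $\GF(2^k)$-exponential sums whose phases are polynomials in $w$ of odd degrees $5$ and $7$. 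Weil's bound then yields $|f_b(y)| = O(2^{k/2})$ uniformly for nontrivial $y$, hence $|\lambda_y| = O(2^k) = O(n_k^{1/3})$, while the Perron eigenvalue is the degree $d = \Theta(2^{2k}) = \Theta(n_k^{2/3})$. Substituting into the Hoffman-style upper bound $\vartheta(G) \le -n \lambda_{\min}/(d - \lambda_{\min})$ for regular graphs produces $\vartheta(G_k) \le O(n_k^{2/3})$.

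For the matching lower bound it suffices to exhibit an independent set of the required size. The subgroup $I = \{0\} \times \ZZ_2^k \times \ZZ_2^k$ has $|I| = n_k^{2/3}$, and two of its vertices differ in a vector whose first $\ZZ_2^k$-block is zero. But every element of $S$ has first block $w_0 + w_1 \ne 0$ (since $W_0 \cap W_1 = \emptyset$), so $I$ induces no edges; combined with $\vartheta(G_k) \ge \alpha(G_k)$, this gives the $\Omega(n_k^{2/3})$ lower bound. The main obstacle I anticipate is the triangle-freeness step: the parity argument is short but relies quite rigidly on characteristic-$2$ peculiarities (Newton's identities collapse because $2 = 0$ and Frobenius is an isomorphism) together with the precise compatibility between the exponents $\{1,3,5\}$ and the $w^7$ partition of $\GF(2^k)$. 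The spectral half is essentially routine once Weil's bound is invoked, and the matching lower bound is almost immediate.
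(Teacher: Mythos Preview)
The paper does not prove this theorem at all: it is stated with the citation \cite{alon1994explicit} and no proof environment follows. So there is no ``paper's own proof'' to compare against; the authors simply import Alon's result.

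That said, your sketch is a faithful outline of Alon's original argument. The triangle-freeness step via Newton's identities in characteristic~$2$ (forcing $e_1=e_3=e_5=0$ and hence a perfect-square sextic, so that the six field elements pair up and cannot split $3+3$ across $W_0,W_1$) is exactly the mechanism Alon uses; the same even-multiplicity trick, applied to four elements, also shows that the map $(u_0,u_1)\mapsto u_0+u_1$ is injective, which you implicitly need for the eigenvalue factorisation $\lambda_y=f_0(y)f_1(y)$ to compute the true spectrum of the simple Cayley graph rather than a multigraph. The Weil-bound step is likewise standard: the relevant phase polynomials have odd degree $\le 7$, so the additive-character estimate applies and gives $|\lambda_y|=O(2^k)$ for $y\neq 0$, while the degree is $|W_0||W_1|=2^{2k-2}=\Theta(n_k^{2/3})$; plugging into Lov\'asz's spectral bound $\vartheta\le -n\lambda_{\min}/(d-\lambda_{\min})$ finishes the upper bound. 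Your independent set $\{0\}\times\ZZ_2^k\times\ZZ_2^k$ is the obvious witness for the lower bound. Nothing is missing.
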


This bound on $\vartheta$ is tight, as shown by Kashin and Konyagin.

\begin{theorem}[\cite{kashin1981systems}]
    For every $n$ vertex triangle-free graph $G$, there holds $\vartheta(G) \ge  (2n)^{2/3}$.
\end{theorem}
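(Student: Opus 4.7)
I would approach this through the primal SDP formulation
\[
    \vartheta(G) \;=\; \max\bigl\{\tr(JX)\,:\,X\succeq 0,\ \tr(X)=1,\ X_{ij}=0\ \forall\, ij\in E(G)\bigr\}.
\]
For triangle-free $G$, the natural feasible witness is $X := A_G^2/\tr(A_G^2)$: the matrix $A_G^2$ is positive semidefinite, and its entry $(A_G^2)_{ij}=|N(i)\cap N(j)|$ vanishes on every edge precisely because $G$ is triangle-free. A direct calculation with $\tr(JA_G^2)=\sum_i d_i^2$ and $\tr(A_G^2)=2e(G)$ then gives
\[
    \vartheta(G)\;\ge\;\frac{\sum_i d_i^2}{2e(G)}\;\ge\;\bar d(G)
\]
by Jensen. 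Combined with the sandwich bound $\vartheta(G)\ge\alpha(G)$ and its triangle-free strengthening $\alpha(G)\ge\Delta(G)$ (a max-degree vertex's neighborhood is independent), this already handles every graph whose maximum or average degree is at least $(2n)^{2/3}$.

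In the remaining small-degree regime I would invoke a Hoffman-type spectral step. The identities $\tr(A_G^2)=nd$ (in the $d$-regular case, $=2e(G)$ in general) and $\tr(A_G^3)=0$ (no triangles) force the smallest eigenvalue $\lambda_n(A_G)$ to satisfy $|\lambda_n|\gtrsim\sqrt{d}$, and for $d$-regular vertex-transitive $G$ one then gets $\vartheta(G)=-n\lambda_n/(d-\lambda_n)\gtrsim n/\sqrt{d}$ from Lov\'{a}sz's formula. Balancing this against the $A_G^2$-witness bound $\vartheta(G)\ge d$ is minimised at $d\approx n^{2/3}$ and recovers $\vartheta(G)\gtrsim n^{2/3}$. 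Extending to arbitrary (non-regular) triangle-free $G$ should proceed through a degree-sequence averaging argument that leverages the constraint $d_i+d_j\le n$ imposed by triangle-freeness on every edge.

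The principal obstacle is executing this last step and pinning down the sharp constant $2^{2/3}$. The $A_G^2$ witness together with the Ajtai--Koml\'{o}s--Szemer\'{e}di lower bound on $\alpha$ delivers at best $\Omega(\sqrt{n\log n})$, so the spectral ingredient really is essential. For general (non-regular) $G$ it likely has to be replaced by a more refined primal construction---for instance a convex mixture of $A_G^2/\tr(A_G^2)$ with rank-one projectors onto large independent sets, with weights chosen against the degree profile---in order to recover exactly the bound $(2n)^{2/3}$ that matches Alon's upper bound.
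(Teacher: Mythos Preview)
Your proposal leaves a genuine gap that you yourself flag: the spectral/Hoffman step is argued only for regular (indeed vertex-transitive) graphs, and you offer no proof for irregular $G$, nor for the sharp constant. The $A_G^2$ witness and the bound $\alpha(G)\ge\Delta(G)$ are both correct and together give $\vartheta(G)\ge\max\{\bar d(G),\Delta(G)\}$, but this can be far below $n^{2/3}$; for an approximately $\sqrt{n}$-regular triangle-free Ramsey graph, each of $\bar d$, $\Delta$ and $\alpha$ is $\widetilde{O}(\sqrt{n})$, so something beyond these three bounds is genuinely required, and your ``degree-sequence averaging argument'' for that missing step is not carried out. As written, the proposal is a plausible outline for the regular case plus an acknowledgment that the general case is open.

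The paper sidesteps all of this with a two-line argument of an entirely different shape. It never bounds $\vartheta(G)$ directly. Instead it quotes the Kashin--Konyagin \emph{upper} bound $\vartheta(\overline{G})\le 2^{2/3}n^{1/3}$, valid for every triangle-free $G$, and then applies Lov\'asz's product inequality $\vartheta(G)\,\vartheta(\overline{G})\ge n$ to flip this into the claimed lower bound on $\vartheta(G)$. There is no case split on degrees, no spectral step, and no regularity hypothesis: the entire analytic content is exported to the cited bound on the complementary theta number, and the present statement is just its dual reformulation.
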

\begin{proof}
    Kashin and Konyagin \cite{kashin1981systems} (see also \cite{alon1994explicit}) proved
    $\vartheta(\overline{G}) \le 2^{2/3} n^{1/3}$ for every triangle-free graph.
    The claim follows, since  
    $\vartheta(G) \vartheta(\overline{G}) \ge n$ for every graph of order $n$
    (see \cite{lovasz1979shannon}, Corollary 2).
\end{proof}

In the following claim, we show that $\rank(A_{G_k} + I) = n_k$, for every admissible $k$.

\begin{claim}
    For every $k > 1$ with $3 \nmid k$, there holds $\rank(A_{G_k} + I) = n_k$.
\end{claim}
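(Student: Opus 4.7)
The plan is Fourier-analytic, exploiting that $G_k$ is a Cayley graph on the elementary abelian group $\ZZ_2^{3k}$. Its adjacency matrix is simultaneously diagonalised by the characters $\chi_y(s) = (-1)^{\langle s,y\rangle}$ indexed by $y \in \ZZ_2^{3k}$, and the corresponding eigenvalues are the integers
$$\lambda_y \;=\; \sum_{s \in S_k} \chi_y(s), \qquad y \in \ZZ_2^{3k},$$
where $S_k = \{u_0+u_1 : u_0 \in U_0,\, u_1 \in U_1\}$ is the connection set. Proving $\rank(A_{G_k}+I) = n_k$ is equivalent to showing that $-1$ is not among these eigenvalues, i.e.\ that $\lambda_y \ne -1$ for every $y$.

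The main device will be a parity observation: since each $\chi_y(s) \in \{\pm 1\}$, one has $\lambda_y \equiv |S_k| \pmod{2}$. Hence if $|S_k|$ is even, then every $\lambda_y$ is even and in particular distinct from $-1$. The whole argument therefore reduces to showing that $|S_k|$ is even.

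To evaluate $|S_k|$ I would first establish that the representation $s = u_0+u_1$ is unique, so that $|S_k| = |U_0|\cdot|U_1| = |W_0|\cdot|W_1|$. Suppose $w_0+w_1 = w_0'+w_1'$ together with the analogous equalities for the $3$rd and $5$th powers, with $w_b, w_b' \in W_b$. Setting $\sigma_1 = w_0+w_1$ and $\sigma_2 = w_0 w_1$, a short direct calculation in characteristic $2$ gives $w_0^3 + w_1^3 = \sigma_1^3 + \sigma_1\sigma_2$, so provided $\sigma_1 \ne 0$, the values of $(p_1, p_3)$ determine $(\sigma_1, \sigma_2)$, and hence $\{w_0, w_1\}$ as a multiset. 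The disjointness $W_0 \cap W_1 = \emptyset$ rules out both $\sigma_1 = 0$ (which in characteristic $2$ means $w_0 = w_1$) and the swap $w_0 = w_1', w_1 = w_0'$, leaving $(w_0, w_1) = (w_0', w_1')$. This collision-free step is the main obstacle I anticipate, since Newton's identities partially degenerate in characteristic $2$ and one must verify that $p_1$ and $p_3$ alone already pin down the elementary symmetric functions.

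To conclude, the hypothesis $3 \nmid k$ is precisely the statement that the multiplicative order of $2$ modulo $7$, which is $3$, does not divide $k$, equivalently $\gcd(7, 2^k-1) = 1$. Hence $x \mapsto x^7$ is a bijection of $\GF(2^k)$, so the leftmost bit of $x^7$ is balanced as $x$ ranges over $\GF(2^k)$, yielding $|W_0| = |W_1| = 2^{k-1}$ and $|S_k| = 2^{2k-2}$, which is even for every $k \ge 2$. Every eigenvalue of $A_{G_k}$ is then even, so $-1 \notin \spec(A_{G_k})$ and $\rank(A_{G_k}+I) = n_k$ follows.
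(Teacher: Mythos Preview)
Your proof is correct and follows the same Fourier-analytic route as the paper: diagonalise the Cayley graph by characters and show that every eigenvalue is even, hence distinct from $-1$. The paper's argument is slightly more streamlined in its packaging: rather than computing $|S_k|$, it writes
\[
\lambda_D \;=\; \sum_{\substack{u_0 \in U_0 \\ u_1 \in U_1}} \chi_D(u_0+u_1) \;=\; \Bigl(\sum_{u \in U_0} \chi_D(u)\Bigr)\Bigl(\sum_{u \in U_1} \chi_D(u)\Bigr),
\]
and observes that the second factor has the parity of $|U_1| = 2^{k-1}$, citing Alon for this cardinality. Note that this factorisation still implicitly relies on the collision-free property you worked out (otherwise summing over the set $S_k$ would not equal summing over pairs), so the content is the same; the paper simply defers that ingredient to Alon's original analysis rather than reproving it. Your Newton-identity argument for uniqueness and your balance argument for $|W_0| = |W_1| = 2^{k-1}$ are both correct and make the proof self-contained, at the cost of some extra work that the paper avoids by citation.
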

\begin{proof}
    Since $G_k$ is a Cayley graph of an Abelian group, $\ZZ_2^{3k}$, its eigenvalues correspond to sums of group
    characters, evaluated over the generating set. The characters of $\ZZ_2^t$ are the Fourier-Walsh
    functions, i.e., the \textit{parity functions} $\chi_D: Z_2^t \to \{ \pm 1 \}$, for every set $D \subseteq [t]$. 
    So, every $D \subseteq [n]$, yields an eigenvalue $\lambda_D$ of $A_{G_k}$, where
    \[
        \lambda_D = \sum_{\substack{u_0 \in U_0 \\ u_1 \in U_1}} \chi_D(u_0 + u_1) = \left( \sum_{u \in U_0} \chi_D(u) \right) \left( \sum_{u \in U_1} \chi_D(u) \right)
    \]
    which is an even integer, because
    $|U_1| = |W_1| = 2^{k-1}$ (see \cite{alon1994explicit}) are \textit{even}.
    Consequently, all eigenvalues of $A_{G_k}$ are even, and 
    $\rank(A_{G_k} + I) = n_k - \mu_{A_{G_k}}(-1) = n_k$.
\end{proof}

\subsection{Minimum Semidefinite Rank}

Another obvious point of comparison is the \textit{rank}.
The minimum semidefinite rank, $\msr(G)$, is the smallest rank of an $n \times n$
orthonormal representation matrix of an order-$n$ graph $G$.
It is easy to see that triangle-free graphs have a large $\msr(G)$.\footnote{
For instance, a trivial Cauchy-Schwarz argument implies that 
$\msr(G) \ge n/3$ for every triangle-free $G$.
}
Deaett \cite{deaett2011minimum} showed that $\msr(G) \ge n/2$, for any connected triangle-free graph.
In contrast, in \Cref{cor:nu_2_3_small_bounds} we prove that $\nu_2(16l) \le 6l + 10$, which is attained
by a family of \textit{connected} graphs.
Therefore,

\begin{corollary}
    For any sufficiently large $n$, there exists an order $n$ triangle-free graph $G$, with 
    \[
        \msr(G) \ge n/2 \text{ and } \rank(A_G + I) = (3/8 + o(1))n
    \]
\end{corollary}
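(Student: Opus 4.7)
The plan is to instantiate the triangle-free family from \Cref{cor:nu_2_3_small_bounds}, namely $G_l \eqdef \mathcal{C} \otimes K_l$, where $\mathcal{C}$ is the Clebsch graph, and verify that it simultaneously meets both bounds once we check that Deaett's hypothesis (connectedness) holds.

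First I would record the rank side of the statement. \Cref{cor:nu_2_3_small_bounds} already gives $\omega(G_l) = 2$ and
\[
\rank(A_{G_l} + I) = 6l + 10 \quad\text{on}\quad n \eqdef v(G_l) = 16l,
\]
so taking $l \to \infty$ yields $\rank(A_{G_l} + I) / n \to 6/16 = 3/8$, giving the required $(3/8 + o(1))n$ upper bound on the complement rank. Triangle-freeness of $G_l$ is already established via \Cref{lem:clique_kron} combined with the fact that $\mathcal{C}$ is triangle-free.

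Next I would verify that $G_l$ is connected for every $l \ge 3$, so that Deaett's lower bound $\msr(G) \ge n/2$ is applicable. By Weichsel's theorem, the Kronecker product $A \otimes B$ of two connected simple graphs is connected iff at least one of $A$, $B$ is non-bipartite. The Clebsch graph $\mathcal{C}$ is connected and has chromatic number $4$, hence contains an odd cycle and is non-bipartite. Since $K_l$ is also connected for $l \ge 2$, Weichsel's theorem yields connectedness of $G_l = \mathcal{C} \otimes K_l$. Applying Deaett's theorem \cite{deaett2011minimum} to $G_l$ then gives $\msr(G_l) \ge n/2$.

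Combining both estimates, for every sufficiently large $l$ the graph $G_l$ of order $n = 16l$ is triangle-free, satisfies $\msr(G_l) \ge n/2$, and satisfies $\rank(A_{G_l} + I) = (3/8 + o(1))n$. To handle values of $n$ that are not multiples of $16$, one can simply pad $G_l$ with isolated vertices, which preserves triangle-freeness, preserves $\msr(G) \ge n/2$ up to lower-order terms (isolated vertices contribute rank-$1$ blocks to any orthonormal representation), and perturbs the complement rank only by $o(n)$. The only step requiring any thought is the verification of connectedness via Weichsel, everything else being a direct assembly of prior results.
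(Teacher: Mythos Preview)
Your proposal is correct and follows essentially the same route as the paper: the paper simply invokes \Cref{cor:nu_2_3_small_bounds} for the Clebsch-derived family $\mathcal{C}\otimes K_l$, asserts its connectedness, and applies Deaett's bound. You supply the details the paper omits, namely the Weichsel-theorem verification of connectedness and the padding argument for general $n$; one small sharpening is that adding $k$ isolated vertices raises $\msr$ by exactly $k$ (each isolated vertex must be represented by a vector orthogonal to all others), so the bound $\msr\ge n/2$ holds exactly rather than merely up to lower-order terms.
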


We do not know whether a converse of the form $\rank(A_G+I) = \Omega(\msr(G))$ holds.

\section{The Nisan-Wigderson Construction}
\label{sect:nw_analysis}

In a seminal paper, Nisan and Wigderson \cite{nisan1995rank} 
constructed an infinite family of symmetric binary matrices $A_k$
of dimension $2^{3^k}$. These matrices
exhibit a separation between log-rank and communication complexity, and show
that $c \ge \log_2 3$ in the conjectured inequality (\ref{eq:log_rank}).

As discussed in \Cref{sect:rank_ramsey_and_log_rank},
such separations can be related to Rank-Ramsey graphs. Here, we analyse the Nisan-Wigderson 
matrices from this perspective,
and show that these matrices $A_k$ have \textit{very large monochromatic principal minors}
and yield, therefore, poor Rank-Ramsey graphs.

\begin{claim}
    For every $k \ge 1$ there exist subsets $S,T \subset [2^{3^k}]$ of cardinality $2^{3^k (1-o_k(1))}$ each, such that  
    \[
        A_k[S] = \mathbf{0}, \text{ and } A_k[T] = J
    \]
\end{claim}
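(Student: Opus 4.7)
The plan is to exploit the recursive $\NAE$ structure of the Nisan-Wigderson matrix and produce $S$ and $T$ as \emph{combinatorial subcubes} of $\{0,1\}^{3^k}$. Recall that $A_k$ is indexed by pairs $x,y \in \{0,1\}^{3^k}$ with $A_k(x,y) = g_k(x \wedge y)$, where $g_k$ is built recursively by $g_1 = \NAE$ and $g_{k+1}(u,v,w) = \NAE(g_k(u), g_k(v), g_k(w))$. The crucial observation is that any subcube $C \subseteq \{0,1\}^{3^k}$, i.e., a set of the form $\{x : x_i = a_i \text{ for } i \in F\}$ for some coordinate set $F$ and values $a_i \in \{0,1\}$, is closed under bitwise $\wedge$, since $a_i \wedge a_i = a_i$. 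Hence if $g_k \equiv c$ on $C$, then $A_k[C]$ is the constant-$c$ matrix, and the problem reduces to finding large $0$- and $1$-monochromatic subcubes of $g_k$.

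I would let $c_k$ and $d_k$ denote the largest dimension of a $0$- and $1$-monochromatic subcube of $g_k$, respectively, so that $|S| = 2^{c_k}$ and $|T| = 2^{d_k}$ suffice. Two simple recursive constructions feed $g_{k+1}$ from subcubes for $g_k$. For the $0$-side, place three copies of a monochromatic subcube of the \emph{same} colour into the three $\NAE$-branches; since $\NAE(0,0,0) = \NAE(1,1,1) = 0$, this gives $c_{k+1} \geq 3 \max(c_k, d_k)$. For the $1$-side, place a $c_k$-dimensional $0$-subcube in one branch, a $d_k$-dimensional $1$-subcube in another, and leave the third branch \emph{completely free}; since $\NAE(0,1,\cdot) \equiv 1$, this yields $d_{k+1} \geq c_k + d_k + 3^k$. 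The base case is $c_1 = 0$ and $d_1 = 1$ (e.g., $\{0\} \times \{1\} \times \{0,1\}$).

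The only quantitative step is to show that the deficits grow strictly slower than $3^k$. Writing $e_k = 3^k - d_k$ and $f_k = 3^k - c_k$, the recursions transform into $f_{k+1} \leq 3 e_k$ and $e_{k+1} \leq e_k + f_k$, which combine to $e_{k+1} \leq e_k + 3 e_{k-1}$. The characteristic equation $\lambda^2 = \lambda + 3$ has dominant root $\phi = (1 + \sqrt{13})/2 \approx 2.303$, which is \emph{strictly smaller} than $3$. Consequently $e_k = O(\phi^k)$, so both $c_k$ and $d_k$ equal $3^k(1 - o_k(1))$, and the resulting subcubes $S$ and $T$ have cardinality $2^{3^k(1 - o_k(1))}$, as required.

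The main obstacle is really just noticing the two ingredients above: that subcubes are closed under $\wedge$, and that the dimension-maximising strategy on the $1$-side is \emph{asymmetric} (one free branch, one $0$-branch, one $1$-branch), since it is this asymmetry together with $\phi < 3$ that lets the deficits fall behind the ambient dimension $3^k$. Once these are in place, the rest is a routine linear-recurrence calculation.
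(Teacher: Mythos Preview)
Your proof is correct and follows essentially the same approach as the paper. The paper works with general cliques $A$ in $G^{(1)}_{k-1}$ and anticliques $B$ in $G^{(0)}_{k-1}$ (rather than subcubes), and builds the very same product sets: $A\times A\times A$ for the $0$-side and $\{0,1\}^{3^{k-1}}\times A\times B$ for the $1$-side, obtaining the recursions $\alpha_k\ge\omega_{k-1}^3$ and $\omega_k\ge 2^{3^{k-1}}\alpha_{k-1}\omega_{k-1}$, which after taking logs are exactly your $c_{k+1}\ge 3d_k$ and $d_{k+1}\ge 3^k+c_k+d_k$. Your deficit recurrence $e_{k+1}\le e_k+3e_{k-1}$ with root $(1+\sqrt{13})/2$ is the paper's $\varepsilon_k=(\varepsilon_{k-1}+\varepsilon_{k-2})/3$ with root $(1+\sqrt{13})/6$, rescaled by the factor $3^k$ between $e_k$ and $\varepsilon_k$. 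Your subcube framing is a mild restriction (your base value $2^{c_1}=1$ is smaller than the paper's $\alpha_1=2$) but buys you the clean observation that closure under $\wedge$ is automatic, so there is nothing to verify when passing from ``$g_k$ constant on $C$'' to ``$A_k[C]$ monochromatic''.
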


We stress that this does \textit{not} a contradict
the discussion of \Cref{sect:rank_ramsey_and_log_rank}:
Exhibiting a log-rank separation \textit{does not} preclude a matrix from \textit{having} a large monochromatic rectangle.

\subsection{The Construction}

The construction of \cite{nisan1995rank} involves a \textit{lift} using the $\AND$ gadget,
with a recursive composition of the function $\NAE$ (as in \Cref{defn:nae}) with itself.
To describe their result, we require some notation.

\begin{definition}
    For every $k > 1$, the Boolean function $\NAE^k: \{0,1\}^{3^k} \to \{0,1\}$ is defined:
    \[
        \NAE^k(x) \eqdef \NAE \left( \NAE^{k-1}(x_1, \dots, x_{3^{k-1}}), \NAE^{k-1}(x_{3^{k-1} + 1}, \dots, x_{2 \cdot 3^{k-1}}), \NAE^{k-1}(x_{2 \cdot 3^{k-1} + 1}, \dots, x_{3^k}) \right) 
    \]
    and $\NAE^1 \eqdef \NAE$.
\end{definition}
We denote by $A_k$ the symmetric binary matrix $\NAE^k \circ \AND^{3^k}$. Moreover:
\[
    \forall b \in \{0,1\}:\ G^{(b)}_k \eqdef G \left( \left\{ x \in \{0,1\}^{3^k} : \NAE^k(x) = b \right\}, \left\{ \{x,y\} : \NAE^k(x \land y) = 1 \right\} \right)
\]
in other words, $G^{(1)}_k$ (resp.\ $G^{(0)}_k$) is the graph whose adjacency matrix is the principal minors of $A_k$,
induced by those indices for which the main diagonal is $1$ (resp.\ $0$).
With this notation, the result of Nisan and Wigderson says:\footnote{One can even
replace $\DCC(A_k)$ by $\mathrm{N}^0(A_k)$ (the $0$-nondeterministic communication complexity of $A_k$) in the statement of the theorem,
as the bound on $\DCC(A_k)$ follows from a reduction to the promise communication problem ``unique disjointness'',
for which Kaibel and Weltge \cite{kaibel2015short} recently proved the bound $\chi^0( \mathrm{UDISJ}_n ) \ge (3/2)^n$,
where $\chi^0$ is the $0$-cover number.}

\begin{theorem}[\cite{nisan1995rank}]
    \label{thm:nw_theorem}
    Let $A_k \eqdef \NAE^k \circ \AND^{3^k} \in M_{3^k}(\RR)$. Then,
    \[
        \log \rank(A_k) \le \mathcal{O} (2^k), \text{ and } \DCC(A_k) = 3^k (1 - o_k(1)) 
    \]
\end{theorem}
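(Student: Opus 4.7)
The plan is to exploit the recursive structure of $A_k$. Identifying an index $x \in \{0,1\}^{3^k}$ with a triple $(x^{(1)}, x^{(2)}, x^{(3)}) \in (\{0,1\}^{3^{k-1}})^3$, the definition of $\NAE^k$ unfolds to
\[
A_k[x, y] \;=\; \NAE\bigl(A_{k-1}[x^{(1)}, y^{(1)}],\ A_{k-1}[x^{(2)}, y^{(2)}],\ A_{k-1}[x^{(3)}, y^{(3)}]\bigr).
\]
Let $s_k$ and $t_k$ denote the largest sizes of an all-zeros (respectively all-ones) principal minor of $A_k$. I will construct witnesses for $s_k$ and $t_k$ by induction on $k$.

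For the all-zeros minor, take any all-zeros witness $S_{k-1}$ of $A_{k-1}$: for every $x,y \in S_{k-1}^{3}$, each of the three coordinates contributes $0$ and $\NAE(0,0,0)=0$; symmetrically, $T_{k-1}^{3}$ is all-zeros via $\NAE(1,1,1)=0$. This gives $s_k \ge \max(s_{k-1}^{3},\, t_{k-1}^{3})$. (The tempting improvement $S_{k-1}^{3}\cup T_{k-1}^{3}$ fails in general: a cross pair with $x \in S_{k-1}^{3}$, $y \in T_{k-1}^{3}$ would force us to evaluate $A_{k-1}$ on pairs from $S_{k-1}\times T_{k-1}$, which are uncontrolled.) For the all-ones minor, the set $S_{k-1}\times T_{k-1}\times \{0,1\}^{3^{k-1}}$ is all-ones: for any $x,y$ in it, $A_{k-1}[x^{(1)},y^{(1)}]=0$ and $A_{k-1}[x^{(2)},y^{(2)}]=1$, and $\NAE(0,1,z)=1$ for every $z$, so the third coordinate is free. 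Hence $t_k \ge s_{k-1}\cdot t_{k-1}\cdot 2^{3^{k-1}}$. The base case is verified directly: $\{000,111\}$ gives $s_1\ge 2$ and $\{011,101,110\}$ gives $t_1\ge 3$.

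Set $\sigma_k := \log_2 s_k$ and $\tau_k := \log_2 t_k$; the recursions then read $\sigma_k \ge 3\max(\sigma_{k-1},\tau_{k-1})$ and $\tau_k \ge \sigma_{k-1}+\tau_{k-1}+3^{k-1}$. It remains to show that $\sigma_k, \tau_k = 3^k(1-o_k(1))$. Introduce the deficits $u_k := 3^k-\tau_k$ and $v_k := 3^k-\sigma_k$. Subtracting from $3^k$ yields
\[
u_k \le u_{k-1}+v_{k-1} \qquad\text{and}\qquad v_k \le 3\min(u_{k-1},v_{k-1})\le 3 u_{k-1},
\]
whence $u_k \le u_{k-1}+3u_{k-2}$. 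The characteristic equation $x^2=x+3$ has positive root $\rho=(1+\sqrt{13})/2\approx 2.303$, so $u_k = O(\rho^k) = o(3^k)$, and the same bound holds for $v_k$. Exponentiating, both $s_k$ and $t_k$ reach size $2^{3^k(1-o_k(1))}$, which is the claim.

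The main obstacle is the asymmetry in the all-zeros construction: we only obtain the maximum, not the sum, of $s_{k-1}^{3}$ and $t_{k-1}^{3}$, because of the uncontrolled cross-entries noted above. Happily, the resulting univariate recurrence $u_k \le u_{k-1}+3u_{k-2}$ still has its characteristic root strictly below $3$, which is precisely the quantitative fact needed for the deficits to be negligible on the $3^k$-scale.
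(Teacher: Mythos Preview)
Your argument is a correct and essentially verbatim reproduction of the paper's proof of a \emph{different} statement, namely the Proposition in Section~6.2 showing that $A_k$ contains all-zeros and all-ones \emph{principal minors} of size $2^{3^k(1-o_k(1))}$. Your quantities $s_k,t_k$ are exactly the paper's $\alpha_k,\omega_k$; your recursions $t_k\ge 2^{3^{k-1}}s_{k-1}t_{k-1}$ and $s_k\ge t_{k-1}^3$ match the paper's $\omega_k\ge 2^{3^{k-1}}\alpha_{k-1}\omega_{k-1}$ and $\alpha_k\ge\omega_{k-1}^3$; and your deficit recurrence $u_k\le u_{k-1}+3u_{k-2}$ is the paper's $\varepsilon_k=(\varepsilon_{k-1}+\varepsilon_{k-2})/3$ after the substitution $u_k=3^k\varepsilon_k$ (the characteristic roots $(1+\sqrt{13})/2$ and $(1+\sqrt{13})/6$ differ by exactly this factor of~$3$).

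Unfortunately, none of this proves Theorem~\ref{thm:nw_theorem}. That theorem has two parts, and your write-up addresses neither:
\begin{itemize}
\item The rank upper bound $\log\rank(A_k)=\mathcal{O}(2^k)$ is obtained in the paper from the identity $\rank(f\circ\AND^n)=\spar(f)$ together with an inductive bound on $\spar(\NAE^k)$. Nothing about monochromatic minors controls rank from above.
\item The lower bound $\DCC(A_k)=3^k(1-o_k(1))$ comes (as the paper notes) from a reduction to unique disjointness. The existence of a large monochromatic principal minor gives \emph{no} communication lower bound whatsoever: the all-ones matrix has a monochromatic minor of full size and $\DCC=0$. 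Indeed, the entire thrust of Section~6 is that the Nisan--Wigderson matrices simultaneously have huge monochromatic principal minors \emph{and} near-maximal $\DCC$; you have established the former, which is orthogonal to the latter.
\end{itemize}
So the gap is not technical but conceptual: you have proved the companion proposition rather than the cited theorem, and the two conclusions are logically independent.
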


It easy to bound $\text{rank}(A_k)$:
It is well known (e.g., \cite{knop2021log})
that $\rank(f \circ \AND^n) = \spar(f)$ for 
any function $f: \{0,1\}^n \to \{0,1\}$.
So here the rank is the number of monomials appearing in the expansion of $\NAE^{k}$,
which can be bounded by a simple inductive argument.

\subsection{Finding Large Monochromatic Principal Minors}

In search of large monochromatic principal minors of $A_k$, let us first estimate the orders of
the two subgraphs, $G^{(0)}_k$ and $G^{(1)}_k$.

\begin{proposition}
    For every $k \ge 1$, we have that:
    \[
        v\left(G^{(0)}_k\right) = 2^{3^k} \cdot \left(\frac{1}{3} \pm o_k(1) \right), \text{ and } v\left(G^{(1)}_k\right) = 2^{3^k} \cdot \left(\frac{2}{3} \pm o_k(1) \right)
    \]
\end{proposition}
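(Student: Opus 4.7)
The plan is to count $v(G_k^{(0)})$ and $v(G_k^{(1)})$ directly via the density $p_k := 2^{-3^k} \big|\{x \in \{0,1\}^{3^k} : \NAE^k(x) = 0\}\big|$, since by the very definitions of $G_k^{(0)}$ and $G_k^{(1)}$ their orders equal $2^{3^k} p_k$ and $2^{3^k} (1 - p_k)$ respectively. The entire proposition therefore reduces to showing that $p_k = \tfrac{1}{3} + o_k(1)$.

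First I would derive a recursion for $p_k$. The recursive definition of $\NAE^k$ splits a uniformly random $x \in \{0,1\}^{3^k}$ into three i.i.d.\ blocks of length $3^{k-1}$, and $\NAE^k(x) = 0$ holds exactly when all three sub-outputs $\NAE^{k-1}$ agree. This yields
\[
    p_k \;=\; p_{k-1}^3 + (1 - p_{k-1})^3, \qquad p_1 = \tfrac{1}{4}.
\]
Writing $f(p) := p^3 + (1-p)^3 = 1 - 3p + 3p^2$, the equation $f(p) = p$ reduces to $3p^2 - 4p + 1 = 0$, with roots $p = 1$ and $p = \tfrac{1}{3}$. Since $f$ maps $[0,\tfrac{1}{2}]$ into itself, a one-line induction shows $p_k \le \tfrac{1}{2}$ for all $k$, so only $\tfrac{1}{3}$ can be the limit.

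The main obstacle is that $f'(\tfrac{1}{3}) = -1$, so the fixed point is only \emph{neutrally} stable at first order and a direct contraction argument fails. I would overcome this by passing to the two-step iterate. Setting $r_k := p_k - \tfrac{1}{3}$, the recursion becomes $r_{k+1} = -r_k + 3 r_k^2$, and composing it once more yields
\[
    r_{k+2} \;=\; r_k\bigl(1 - 18 r_k^2 + 27 r_k^3\bigr).
\]
A direct estimate shows that for $|r| \le \tfrac{1}{6}$ the multiplier $(1 - 18 r^2 + 27 r^3)$ lies in $[3/8, 1)$. Since $|r_1| = \tfrac{1}{12}$ and $|r_2| = \tfrac{5}{48}$ both satisfy $|r| \le \tfrac{1}{6}$, the bound $|r_{k+2}| \le |r_k|$ then propagates by induction to every $k$, and $r_{k+2}$ inherits the sign of $r_k$ while strictly contracting in magnitude. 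Consequently the subsequences $(r_{2k})$ and $(r_{2k+1})$ are each monotone and bounded, hence convergent; the two-step fixed-point equation $r(1 - 18 r^2 + 27 r^3) = r$ forces each limit to lie in $\{0, \tfrac{2}{3}\}$, and only $r = 0$ is compatible with $|r| \le \tfrac{1}{6}$. Therefore $p_k \to \tfrac{1}{3}$, which is exactly the claimed asymptotics for $v(G_k^{(0)})$ and $v(G_k^{(1)})$.
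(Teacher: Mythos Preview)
Your proof is correct, and it in fact repairs a genuine gap in the paper's own argument. Both you and the paper set up the same one-step recursion (the paper works with $q_k := 1 - p_k = \Pr[\NAE^k(x)=1]$ and obtains $q_k = 3q_{k-1}(1-q_{k-1})$, $q_1 = 3/4$). The paper then asserts that $x > 3x(1-x) > \tfrac{2}{3}$ for $\tfrac{2}{3} < x < \tfrac{3}{4}$ and concludes that $q_k$ decreases monotonically to $2/3$. But the second inequality is false on that interval (e.g.\ $3\cdot 0.7\cdot 0.3 = 0.63 < \tfrac{2}{3}$), and indeed $q_2 = 9/16 < 2/3$, so the sequence is \emph{not} monotone. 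This is precisely the phenomenon you identify: the derivative at the fixed point equals $-1$, the iterates oscillate around it, and no one-step monotonicity argument can work. Your passage to the two-step map $r_{k+2} = r_k(1 - 18r_k^2 + 27r_k^3)$, showing that each parity subsequence is monotone, bounded in $[-\tfrac16,\tfrac16]$, and has $0$ as its only admissible limit, is exactly the fix that is needed.

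One cosmetic nit: your claim that the multiplier $1 - 18r^2 + 27r^3$ lies in $[3/8,1)$ is literally false at $r=0$, where it equals $1$. Since $r_1 = -\tfrac{1}{12}$ and $r_2 = \tfrac{5}{48}$ are nonzero and the multiplier is strictly below $1$ for every nonzero $|r|\le \tfrac16$, this does not affect the argument; you may simply write $(0,1)$ on $0<|r|\le\tfrac16$ instead.
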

\begin{proof}
    Let $p_k$ be the probability that a uniformly random $x \sim \{0,1\}^{3^k}$ is in $V(G^{(1)}_k)$. Then,
    \begin{align*}
        p_k &= \Pr_{x \sim \{0,1\}^{3^k}} \left[ \left(\NAE^k \circ \AND^{3^k}\right) (x) = 1 \right] \\
        &= \Pr_{x_1, x_2, x_3 \sim \{0,1\}^{3^{k-1}}} \left[ \NAE \left(\NAE^{k-1}(x_1), \NAE^{k-1}(x_2), \NAE^{k-1}(x_3) \right) = 1 \right] \\
        &= 3 p_{k-1}^2(1-p_{k-1}) + 3 p_{k-1} (1-p_{k-1})^2 = 3 p_{k-1} (1-p_{k-1}).
    \end{align*}

    By direct observation we have $p_1 = 3/4$. Furthermore, 
    $x>3x(1-x)>\tfrac 23$ for $\tfrac 34 > x > \tfrac 23$. Thus,
    the sequence $p_k$ is decreasing to its limit,
    the unique positive root of $x=3x(1-x)$, namely $x=2/3$.
\end{proof}

We now construct large monochromatic principal minors in $G^{(0)}_k$ and $G^{(1)}_k$.

\begin{proposition}
    The matrices $A_k$ have large monochromatic principal minors. That is,
    \[
        \min \left\{ \omega\left(G^{(1)}_k\right), \alpha\left(G^{(0)}_k \right) \right\} \ge 2^{3^k \left( 1 - o_k(1) \right)} 
    \]
\end{proposition}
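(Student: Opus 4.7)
The plan is to construct both a large clique in $G^{(1)}_k$ and a large anticlique in $G^{(0)}_k$ by exploiting the recursive $\NAE$-lift structure of $A_k$: identifying $x \in \{0,1\}^{3^k}$ with $(x_1, x_2, x_3) \in (\{0,1\}^{3^{k-1}})^3$, we have $A_k[x,y] = \NAE\!\left(A_{k-1}[x_1, y_1], A_{k-1}[x_2, y_2], A_{k-1}[x_3, y_3]\right)$. The crucial observation driving the construction is that $\NAE(1, 0, \ast) = 1$ regardless of $\ast$, which creates a fully unconstrained slot in the clique construction and is responsible for the exponent approaching $3^k$.

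For the base case, I would take $C_1 := \{110, 101, 011\}$ and $T_1 := \{000, 111\}$; a direct check shows that every pair in $C_1$ has AND equal to a weight-one string (so $\NAE = 1$), while every pair in $T_1$ has AND equal to $000$ (so $\NAE = 0$). For the recursive step, define
\[
C_k \eqdef C_{k-1} \times T_{k-1} \times \{0,1\}^{3^{k-1}}, \qquad T_k \eqdef (C_{k-1})^3.
\]
For $(a, c, z) \in C_k$, the top-level $\NAE$ sees the triple $(\NAE^{k-1}(a), \NAE^{k-1}(c), \NAE^{k-1}(z)) = (1, 0, \ast)$, which is non-constant, so $\NAE^k(a, c, z) = 1$. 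For $x = (a, c, z), y = (a', c', z') \in C_k$, the triple $(\NAE^{k-1}(a \land a'), \NAE^{k-1}(c \land c'), \NAE^{k-1}(z \land z'))$ equals $(1, 0, \ast)$ by the clique/anticlique hypotheses on $C_{k-1}$ and $T_{k-1}$, so $\NAE^k(x \land y) = 1$; hence $C_k$ is a clique in $G^{(1)}_k$ of size $|C_{k-1}| \cdot |T_{k-1}| \cdot 2^{3^{k-1}}$. Dually, every pair in $T_k$ produces the triple $(1,1,1)$ throughout (both on the diagonal and on pairwise ANDs), giving $\NAE^k = 0$, so $T_k$ is an anticlique in $G^{(0)}_k$ of size $|C_{k-1}|^3$.

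For the asymptotic analysis, set $a_k \eqdef \log_2 |C_k|$ and $b_k \eqdef \log_2 |T_k|$; the recursions yield $a_k \ge a_{k-1} + b_{k-1} + 3^{k-1}$ and $b_k \ge 3 a_{k-1}$, from which
\[
a_k \ge a_{k-1} + 3 a_{k-2} + 3^{k-1}.
\]
The ansatz $a_k = \alpha \cdot 3^k$ gives $3\alpha = 2\alpha + 1$, so $\alpha = 1$, while the homogeneous characteristic roots $(1 \pm \sqrt{13})/2$ both have modulus strictly less than $3$ and are therefore sub-dominant. Consequently $a_k = 3^k(1 - o_k(1))$ and $b_k \ge 3 a_{k-1} = 3^k(1 - o_k(1))$, which is the claimed bound.

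The main obstacle, and the whole content of the proof, is finding a recursion whose exponent approaches $3^k$ rather than some fixed fraction. The naive approach $C_k := C_{k-1}^2 \times T_{k-1}$ and $T_k := T_{k-1}^3$ gives a linear recurrence $a_k = 2 a_{k-1} + 3 a_{k-2}$ with dominant root exactly $3$, but initial conditions $a_1 = \log_2 3$, $a_2 = \log_2 18$ pin the leading coefficient at $\approx 0.48$, falling well short of $1$. The twist is to notice that once two of the three signature bits are forced to $1$ and $0$, the third bit is free, which injects an extra factor of $2^{3^{k-1}}$ per level and forces the leading coefficient to $1$.
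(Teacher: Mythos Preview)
Your proof is correct and follows essentially the same approach as the paper's: the constructions $C_k = C_{k-1} \times T_{k-1} \times \{0,1\}^{3^{k-1}}$ and $T_k = (C_{k-1})^3$ are exactly the paper's $A' = \{0,1\}^{3^{k-1}} \times A \times B$ and $B' = A \times A \times A$ (up to permuting factors), leading to the same recurrence $a_k = a_{k-1} + 3a_{k-2} + 3^{k-1}$. The paper analyzes it via the substitution $a_k = 3^k(1-\varepsilon_k)$, obtaining characteristic roots $(1\pm\sqrt{13})/6$ for $\varepsilon_k$, which is equivalent to your direct analysis with homogeneous roots $(1\pm\sqrt{13})/2$; your explicit base case $C_1,T_1$ is a nice addition the paper omits.
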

\begin{proof}
    For every positive $k$, let us denote
    \[
        \alpha_k \eqdef \alpha \left(G^{(0)}_{k} \right), \text{ and }\omega_k \eqdef \omega \left( G^{(1)}_k \right)
    \]
    We claim that $\alpha_k \ge \omega_{k-1}^3$ and that $\omega_k \ge 2^{3^{k-1}} \alpha_{k-1} \omega_{k-1}$, for any $k \ge 2$.
    Let $A,B \subset \{0,1\}^{3^{k-1}}$ be a largest clique of $G^{(1)}_{k-1}$ and largest anticlique of $G^{(0)}_{k-1}$, respectively.
    Observe that for every $x_1, x_2 \in A$, it holds that $\NAE^{k-1}(x_1 \land x_2) = 1$ (whether or not $x_1 = x_2$).
    Likewise it holds that $\NAE^{k-1}(y_1 \land y_2) = 0$ for every $y_1, y_2 \in B$. Therefore, the sets
    \[
        B' \eqdef A \times A \times A \subset \{0,1\}^{3^k}, \text{ and } A' \eqdef \{0,1\}^{3^{k-1}} \times A \times B \subset \{0,1\}^{3^k}
    \]
    are an anticlique of $G^{(0)}_k$, and a clique of $G^{(1)}_k$, respectively.

    Combining the two bounds, it holds that
    $\omega_k \ge 2^{3^{k-1}} \alpha_{k-1} \omega_{k-1} \ge 2^{3^{k-1}} \omega_{k-1} \omega_{k-2}^3$.
    Taking logs and denoting $a_k \eqdef \log (\omega_k)$, we thus arrive at the linear recurrence $a_k = 3^{k-1} + a_{k-1} + 3 a_{k-2}$.
    Denoting $a_k=3^k(1-\varepsilon_k)$, the above translates into
    \[3^k(1-\varepsilon_k)=3^{k-1}+3^{k-1}(1-\varepsilon_{k-1})+3^{k-1}(1-\varepsilon_{k-2})\]
    i.e.,
    \begin{equation}\label{eqn:vareps}
    \varepsilon_k=\frac{\varepsilon_{k-1}+\varepsilon_{k-2}}{3}   
    \end{equation}
    and we conclude that $a_k=3^k (1 - o_k(1))$, where the little-oh term is exponentially
    small in $k$. To see this consider $\lambda^2-\frac{\lambda+1}{3}$, 
    the characteristic polynomial of Equation \ref{eqn:vareps}, the roots of which are
    $\frac{1 \pm \sqrt{13}}{6}$.
 \end{proof}

\section{Open Problems}
\label{sect:open_problems}

Many intriguing questions regarding the Rank-Ramsey problem remain unanswered.
For starters, the growth rate of the function $\nu_d(n)$ is mostly unknown.
For bounded $d$, we have shown (see \Cref{sect:kron_powers}) a polynomial separation
between $\nu_d(n)$ and $n$, starting at $d=41$.
Is this true of all $d$?
Do there exist triangle-free Rank-Ramsey graphs?
Concretely,

\begin{openq}
    Is there a constant $c > 0$ such that $\nu_2(n) = \mathcal{O}(n^{1-c})$?
\end{openq}

The paper \cite{linial2007complexity} advocates the perspective that rank
is a complexity measure of sign matrices and mentions some additional
measures of similar nature like $\gamma_2$, margin complexity and more.
Rather than demand that the complement rank be small,
one can similarly investigate graphs with low clique number for which 
$\gamma_2(\overline{G})$ is also small etc.\footnote{
If one is tempted to replace \textit{both} the clique number with rank, 
\textit{and} the independence number with complement rank, this renders the
problem uninteresting, as $\max \{\rank(A_G), \rank(A_G+I)\} \ge n/2$ (either $0$ or $-1$ has multiplicity $\le n/2$).
}

We saw several connections between Rank-Ramsey numbers and various
graph parameters. Our list is far from exhausting all possible interesting ties.
Could there be any relation with the Colin de Verdi{\`e}re parameter \cite{de1993new})?
With other orthonormal representations of graphs
(e.g., see \cite{laurent2012gram})?

In \Cref{sect:lb_sect}, we briefly consider minimum semidefinite rank,
and show $n$-vertex triangle-free graphs $G$ and $H$, with 
both $\left(\msr(G) - \rank(A_G + I)\right) = \Omega(n)$
and with $\left(\rank(A_H + I) - \msr(H)\right) = \Omega(n)$.
The relation between the 
two is therefore likely nuanced.
We find it interesting to 
understand what is the least distance, under the rank metric,
between $A_G + I$ and a PSD representation matrix $M$ for $G$.
Another quantity which we consider in \Cref{sect:lb_sect}, originating in
orthonormal representations, is the Lov{\'a}sz number.
We give some evidence that the Lov{\'a}sz number
bounds the complement rank from below, perhaps up to a multiplicative constant.
Thus, we ask:

\begin{openq}
    What is the relation between $\vartheta(G)$ and $\rank(A_G + I)$ for a simple graph $G$?
\end{openq}

The regime of unbounded $d$, which we explore in \Cref{sect:ae_lift}
is also of great interest. 
Recall that we construct a graph of logarithmic clique number, and polynomial rank. 
This is  in stark contrast to the classical Ramsey problem, 
where in almost all graphs both the
clique number \textit{and} independence number are logarithmic.
Naturally, we think that maintaining low-rank should be much harder than controlling the independence number.
It is therefore quite natural to ask,

\begin{openq}
    What is the growth rate of $\nu_d(n)$, as a function of $d$ and $n$ $(\text{as } d=d(n) \to \infty)$?
\end{openq}

In \Cref{subsect:rank_ramsey} we begin an exploration of the Rank-Ramsey numbers. 
Unlike the usual Ramsey numbers, which are symmetric (i.e., $R(s,t) = R(t,s)$ for every $s$ and $t$), 
this does not hold for our numbers.
In fact, we determine the numbers $R^k(s,t)$ for every $2 \le t \le 5$, and 
prove that $R^k(3,n) > R^k(n,3)$ for sufficiently large $n$. 
It would be interesting to better understand the interplay between the clique number and complement rank.

Another perspective of Rank-Ramsey numbers, stems from \textit{twin-free graphs}.
Recall that two vertices in a graph
are called {\em twins} if they are non-adjacent and have the same set of neighbours.
Pruning twins from a graph clearly affects neither the rank and nor chromatic number.
Therefore, the sets $\mathcal{G}_r$ of all twin-free connected graphs of rank $r$,
play a crucial role in understanding the log-rank conjecture.
Indeed, the log-rank problem (in particular, its graph-theoretic formulation, see \Cref{sect:rank_ramsey_and_log_rank}) can be \textit{equivalently} re-formulated as follows:
\paragraph{Equivalent Formulation of Log-Rank Problem.} \ \vspace{0.05in} \\
Is there a constant $c > 0$ such that 
$
\displaystyle \log \chi(G) \le \mathcal{O} \left( \log^c r \right)
\text{~for every~}G \in \mathcal{G}_r\text{~and every~} r > 1 ?
$ \\

It would therefore be interesting to investigate the sets $\mathcal{G}_r$, for a larger range of values $r$.
\bibliography{rank_ramsey}
\bibliographystyle{alpha}
\appendix
\section{The Construction of Codenotti, Pudl{\'a}k and Resta}
\label{sect:2i_construction}

Codenotti, Pudl{\'a}k and Resta constructed \cite{codenotti2000some}
an interesting family of explicit triangle-free Ramsey graphs.
The construction is elementary.

\begin{theorem}[\cite{codenotti2000some}]
    \label{thm:high_mult_ev_2}
    For every sufficiently large $n$, there exists an $n$-vertex triangle-free graph $H_n$ such that
    $\rank(A_{H_n} - 2I) = \mathcal{O}\left( n^{3/4} \right)$.
\end{theorem}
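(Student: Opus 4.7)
The plan is to realise $A_{H_n}-2I$ as a low-rank indefinite Gram matrix. Note first that any $k\times k$ principal minor of $A-2I$ indexed by an independent set of $H_n$ equals $-2 I_k$ and thus has rank $k$, so the target bound immediately forces $\alpha(H_n)=\mathcal{O}(n^{3/4})$; the construction must witness this matching Ramsey bound \emph{algebraically}, by exhibiting a rank factorisation of $A-2I$.

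Fix a prime power $q$, take the vertex set $V$ to be a variety inside $\mathbb{F}_q^d$ with $n\asymp q^{d}$ points, and build an embedding $\Phi\colon V\to\mathbb{R}^{r}$ by evaluating a carefully chosen family of $r=\mathcal{O}(n^{3/4})$ low-degree polynomials at each $v\in V$. Equip $\mathbb{R}^r$ with a symmetric bilinear form $B$ of some signature $(p,s)$, $p+s=r$, designed so that $B(\Phi(v),\Phi(v))=-2$ for every $v\in V$ and $B(\Phi(u),\Phi(v))\in\{0,1\}$ for every $u\neq v$. Declaring $u\sim v$ iff $B(\Phi(u),\Phi(v))=1$, the matrix $A_{H_n}-2I$ coincides with the $B$-Gram matrix of $\Phi$; it factors through $\mathbb{R}^{r}$ and hence has rank at most $r=\mathcal{O}(n^{3/4})$. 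Diagonal entries of $A_{H_n}$ are forced to be zero because $(A_{H_n}-2I)_{v,v}=B(\Phi(v),\Phi(v))=-2$.

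The substantive content is triangle-freeness: no three distinct $u,v,w\in V$ may pairwise satisfy $B(\Phi(\cdot),\Phi(\cdot))=1$. This is a genuinely three-variable condition, \emph{not} forced by signature alone, since a hypothetical triangle yields the $3\times 3$ Gram matrix $J-3I$ with spectrum $\{0,-3,-3\}$, which embeds into $\mathbb{R}^{p,s}$ as soon as $s\ge 2$. The correct $\Phi$ must therefore come from a rigid algebraic setting --- most naturally a polarity-style orthogonality construction on a Veronese-embedded variety over $\mathbb{F}_q$, in the spirit of Brown's or Erd\H{o}s--R\'enyi's polarity graphs --- where any would-be triangle is ruled out by a short dimension count or an incidence argument specific to the chosen form (three mutually $B$-orthogonal points cannot satisfy the defining polynomial equations of $V$). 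I expect this step to be the main obstacle; once $\Phi$ is fixed, the rank bound and the $\{0,1\}$-integrality of off-diagonal Gram entries both follow mechanically from the polynomial-degree count and the combinatorial design of the form.
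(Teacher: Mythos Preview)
Your proposal is a plan, not a proof. You correctly note that any construction must supply a rank-$\mathcal{O}(n^{3/4})$ factorisation of $A_{H_n}-2I$, and you sketch an approach via an indefinite Gram matrix coming from polynomial evaluation on an $\mathbb{F}_q$-variety. But you never specify the variety, the form $B$, or the evaluating polynomials, and you explicitly label triangle-freeness---the entire content of the theorem---as an unresolved ``main obstacle'' to be handled by some unspecified ``short dimension count or incidence argument.'' There is nothing here one can verify; the proposal amounts to restating the desiderata.

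The actual construction in \cite{codenotti2000some}, as sketched in the paper, is completely different and far more elementary---no finite-field geometry or indefinite forms appear at all. One starts from an auxiliary bipartite graph $G=(L\sqcup R,E)$ on $k$ vertices with girth at least $8$ (Wenger's graphs furnish such $G$ with $|E|=n=\Theta(k^{4/3})$), and builds a \emph{directed} graph $D$ on vertex set $E$ by setting $(l,r)\to_D(a,b)$ iff $l\sim_G b$, $a\neq l$, $b\neq r$. Because this adjacency is governed by the $G$-incidence of a single endpoint of each edge, one gets $\rank(A_D-I)\le k$ directly; the girth-$8$ hypothesis then rules out both back-and-forth edges and transitive triangles in $D$. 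Symmetrising, the undirected graph $H$ with $A_H=A_D+A_D^T$ is triangle-free and satisfies $\rank(A_H-2I)\le 2\rank(A_D-I)\le 2k=\mathcal{O}(n^{3/4})$. The low-rank factorisation you hoped to manufacture algebraically comes here for free from the incidence structure of $G$, and triangle-freeness is a short graph-theoretic consequence of high girth rather than a delicate algebraic argument.
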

\noindent The construction is as follows:
Let $G = (L \sqcup R, E)$ be a $k$-vertex bipartite graph with girth at least $8$.
Let $D = (E, \hat{E})$ be the directed graph whose vertices are the (undirected) edges of $G$, and
\[
    \forall (l,r), (a,b) \in E:\ (l, r) \to_{D} (a, b) \iff (l \sim_{G} b) \land (a \ne l) \land (b \ne r)
\]
It is not hard to see that:
\begin{enumerate}
    \item $\rank(A_{D} - I) \le k$.
    \item $D$ contains no transitive triangle.
    \item There are no back-and-forth edges in $D$.
\end{enumerate}
The undirected graph $H$ whose adjacency matrix is $A_H = A_D + A_D^T$ (i.e., omitting orientations) is therefore triangle-free,
and has $\rank(A_H - 2I) \le 2 \rank(A_D - I) \le 2k$.
It remains to fix the base graph $G$ to be bipartite, with large size relative to order, and of girth at least $8$.
For this, the graphs $H_3$ of Wenger \cite{wenger1991extremal} suffice: they 
have order $k$ and size $n=\mathcal{O}\left(k^{4/3}\right)$. 

As usual, $\alpha(H) \le \rank(A_{H} + c I)$ for any $c \ne 0$, so this Theorem yields Ramsey graphs.
While these graphs have \textit{very high} multiplicity of the eigenvalue $2$
(or $-2$, by the clique-tensoring trick of \Cref{lem:clique_kron}),
they are not Rank-Ramsey, as $\rank(A + I) \ge n - \rank(A - 2I) = (1-o(1))n$.
Crucially, we remark that the matrix $A_H - 2I$ is not binary.

\section{Hoffman, Cvetkovi\'c, Lov\'asz and (Rank-)Ramsey}
\label{sect:hoffman_cvetkovic_lovasz_ramsey}

Any Rank-Ramsey graph is also a Ramsey graph, since $\rank(A_G + I) \ge \alpha(G)$ holds for every graph $G$.
How does this bound compare to other known upper bounds on the independence number of a graph?
Two of the best-known bounds are Hoffman's ratio bound, and Cvetkovi\'c's inertia bound, 
both of which relate to the \textit{spectra} of 
\textit{symmetric weighted adjacency matrices} of a graph $G$.
\begin{definition}
    Let $G$ be a graph and let $w: E(G) \to \RR$ be a function. The weighted adjacency of $G$ is the matrix:
    \[
        A_{G,w} \eqdef \left(w(\{x, y\}) \cdot \mathbbm{1}\{x \sim y\}\right)_{x,y \in V(G)}
    \]
\end{definition}

\begin{theorem}(Hoffman Bound)
    \label{thm:hoffman}
    Let $G$ be an $n$-vertex graph and let $w: E(G) \to \RR$ be a weight function, such that $A_{G,w}$ has constant row sums.
    Then, 
    \[
        \alpha(G) \le h(A_{G,w}) \eqdef \left| \frac{\lambda_{min}(A_{G,w})}{\lambda_{max}(A_{G,w}) - \lambda_{min}(A_{G,w})} \right| \cdot n 
    \]
\end{theorem}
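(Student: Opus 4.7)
The plan is to execute the standard ``quadratic form" proof of Hoffman's inequality. Write $A := A_{G,w}$, $n = v(G)$, and let $d$ be the common row sum, so that $A\mathbf{1} = d\mathbf{1}$; in particular $\mathbf{1}$ is an eigenvector of $A$ with eigenvalue $d$. I will carry out the argument under the (standard) assumption that $d = \lambda_{\max}(A)$, in which case the orthogonal complement $\mathbf{1}^\perp$ is an $A$-invariant subspace on which the smallest eigenvalue is still $\lambda_{\min}(A)$.

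Let $S\subseteq V(G)$ be a maximum independent set, $s := \alpha(G) = |S|$, with indicator vector $\mathbf{1}_S\in\{0,1\}^n$. I would decompose orthogonally
\[
\mathbf{1}_S \;=\; \tfrac{s}{n}\,\mathbf{1} \;+\; v, \qquad v\perp \mathbf{1},
\]
so that $\|v\|^2 = \|\mathbf{1}_S\|^2 - (s/n)^2 n = s - s^2/n$. The key identity is that independence gives $\mathbf{1}_S^T A\,\mathbf{1}_S = 0$, since no edge of $G$ has both endpoints in $S$. Using $A\mathbf{1}=d\mathbf{1}$ and $v\perp\mathbf{1}$, this quadratic form also equals
\[
\tfrac{s^2}{n^2}\,\mathbf{1}^T A \mathbf{1} \;+\; v^T A v \;=\; \tfrac{s^2 d}{n} \;+\; v^T A v \;\ge\; \tfrac{s^2 d}{n} \;+\; \lambda_{\min}(A)\!\left(s - \tfrac{s^2}{n}\right),
\]
where the last step is the Rayleigh quotient bound applied inside $\mathbf{1}^\perp$. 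Equating the two expressions, dividing by $s>0$, and rearranging yields
\[
0 \;\ge\; \tfrac{s(d-\lambda_{\min}(A))}{n} \;+\; \lambda_{\min}(A), \qquad\text{i.e.,}\qquad s \;\le\; \frac{-\lambda_{\min}(A)}{d - \lambda_{\min}(A)} \cdot n.
\]
Substituting $d = \lambda_{\max}(A)$ gives exactly $\alpha(G) \le h(A_{G,w})$. Note that $\lambda_{\min}(A) \le 0$ is automatic: $A$ has zero diagonal (as $G$ is a simple graph), hence $\tr(A)=0$, so $A$ has a non-positive eigenvalue (strictly negative unless $A=0$).

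The main subtlety, and the only real obstacle, is the implicit assumption that $\mathbf{1}$ is the eigenvector realising $\lambda_{\max}(A)$. The hypothesis ``constant row sums'' only guarantees that $d$ is \emph{some} eigenvalue of $A$; for arbitrary signed weights one could in principle have $d < \lambda_{\max}(A)$, in which case the decomposition above does not isolate the maximum eigenvalue and the Rayleigh step fails. In the setting where the statement is typically applied, the weight matrix is entrywise nonnegative, and Perron-Frobenius then forces $d = \lambda_{\max}(A)$, closing the gap; I would note this hypothesis explicitly (or replace $\lambda_{\max}(A)$ by $d$ in the definition of $h$) before presenting the computation above.
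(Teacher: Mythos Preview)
The paper does not supply a proof of this theorem at all: it is stated in the appendix as a classical result (the Hoffman ratio bound) and used only as background, with no accompanying argument. So there is no ``paper's own proof'' to compare your attempt against.

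Your argument is the standard textbook proof and is correct. One small remark: the Rayleigh step $v^T A v \ge \lambda_{\min}(A)\|v\|^2$ holds for every vector $v$, so you do not actually need the assumption $d=\lambda_{\max}(A)$ to justify that inequality; the place where the assumption is genuinely needed is only at the very end, to identify the row-sum $d$ with $\lambda_{\max}(A)$ in the final expression. Your closing paragraph already flags this point accurately: the hypothesis ``constant row sums'' yields $\alpha(G)\le \frac{-\lambda_{\min}}{d-\lambda_{\min}}\,n$, and one must either assume $d=\lambda_{\max}$ (automatic for nonnegative weights via Perron--Frobenius) or read the statement with $d$ in place of $\lambda_{\max}$. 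Since the paper treats the bound as folklore and does not dwell on this, your handling is entirely appropriate.
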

\begin{theorem}(Cvetkovi\'c Bound \cite{cvetkovic1971graphs, calderbank1992improved})
    \label{thm:cvetkovic}
    Let $G$ be a graph and let $w: E(G) \to \RR$ be a weight function. Then,
    \[
        \alpha(G) \le c(A_{G,w}) \eqdef \left| \left\{ \lambda \in \mathrm{spec}(A_{G,w}) : \lambda \ge 0 \right\} \right|
    \]
\end{theorem}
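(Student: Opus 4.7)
The plan is to apply Cauchy's interlacing theorem to a principal submatrix of $A_{G,w}$ indexed by a maximum independent set. First I would fix an independent set $S \subseteq V(G)$ of size $\alpha(G)$ and examine the principal submatrix $A_{G,w}[S]$. Since no two vertices of $S$ are adjacent in $G$, every off-diagonal entry of this submatrix vanishes by the definition of $A_{G,w}$ (the indicator $\mathbbm{1}\{x \sim y\}$ kills it), and the diagonal is already zero as loops are excluded. Hence $A_{G,w}[S]$ is the $\alpha(G) \times \alpha(G)$ zero matrix, whose spectrum consists of $\alpha(G)$ copies of $0$.

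Next I would invoke Cauchy interlacing. Because the weight function is symmetric on unordered edges, $A_{G,w}$ is a real symmetric matrix; ordering its eigenvalues as $\lambda_1(A_{G,w}) \ge \lambda_2(A_{G,w}) \ge \dots \ge \lambda_n(A_{G,w})$, interlacing guarantees that for every $m \times m$ principal submatrix $B$ with eigenvalues $\mu_1 \ge \dots \ge \mu_m$, one has $\lambda_k(A_{G,w}) \ge \mu_k \ge \lambda_{k + n - m}(A_{G,w})$ for each $k = 1, \dots, m$. Applying this with $B = A_{G,w}[S]$ and $m = \alpha(G)$, and using $\mu_k = 0$ for every $k$, I obtain $\lambda_k(A_{G,w}) \ge 0$ for all $k = 1, \dots, \alpha(G)$.

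Consequently $A_{G,w}$ has at least $\alpha(G)$ non-negative eigenvalues counted with multiplicity, which is exactly the bound $\alpha(G) \le c(A_{G,w})$. There is no serious obstacle here: the entire argument reduces to the elementary observation that an independent set induces a vanishing principal submatrix, combined with the textbook interlacing inequality. The only points requiring care are bookkeeping with multiplicities, and noting that the statement counts eigenvalues with $\lambda \ge 0$ rather than $\lambda > 0$, which matches precisely what interlacing delivers, since the interlaced eigenvalues of the zero matrix are themselves zero (and hence non-negative, not strictly positive).
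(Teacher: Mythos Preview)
Your argument is correct and is precisely the classical proof of Cvetkovi\'c's inertia bound: the zero principal submatrix induced by a maximum independent set, combined with Cauchy interlacing, forces at least $\alpha(G)$ non-negative eigenvalues. Note that the paper does not supply its own proof of this theorem; it is stated with citation to \cite{cvetkovic1971graphs, calderbank1992improved}, so there is no in-paper argument to compare against.
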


For a graph $G$, let $h(G) \eqdef \inf_{w} h(A_{G,w})$ and $c(G) \eqdef \inf_{w} c(A_{G,w})$
be the best-possible bounds attained by \Cref{thm:hoffman} and \Cref{thm:cvetkovic}, respectively.
The Hoffman and Cvetkovi\'c bounds had been at the centre of several longstanding problems in algebraic graph theory.
Sinkovic \cite{sinkovic2018graph} constructed the first example of
a graph for which $c(G)$ is not tight,
resolving decade-long an open question of Godsil \cite{godsil2003interesting}.
The comparability of Hoffman and Cvetkovi\'c had similarly been undetermined until very recently:
Kwan and Wigderson \cite{kwan2024inertia} constructed a family of graphs
for which $h(G) = o(c(G))$, 
and a week later, Igringer \cite{ihringer2023ratio} gave an example of a family for which $c(G) = o(h(G))$.
Both of these separations are \textit{polynomial}.

In this paper we consider two other bounds on the independence number $\alpha(G)$:
$\rank(A_G+I)$, by means of Rank-Ramsey graphs, and the Lov\'asz theta $\vartheta(G)$,
as discussed in \Cref{sect:lb_sect}.
These two bounds are intimately related to Hoffman and Cvetkovi\'c.
Clearly $\rank(A_G + I) = n - \mu_{A_G}(-1) \ge c(G)$.
Moreover, it is well-known that $\vartheta(G) \ge h(G)$, for every graph $G$.
Therefore, we may think of $\rank(A_G+I)$ as a less-tight analogue of Cvetkovi\'c's bound,
and of $\vartheta(G)$ is an analogue of Hoffman's bound, and as such the task of comparing
$\rank(A_G + I)$ and $\vartheta(G)$ related to the well-known problem of comparing the ratio
bound versus the inertia bound.

\end{document}